\documentclass[11pt]{article}
\usepackage{geometry}                
\geometry{a4paper} 

\usepackage{amsmath,amssymb}
\usepackage{amsthm}
\usepackage{hyperref}
\usepackage{graphicx,epsfig,color}
\usepackage{booktabs,bm,multirow}
\usepackage{cases}
\usepackage[caption=false]{subfig}

\newtheorem{theorem}{Theorem}

\newtheorem{remark}[theorem]{Remark}
\newtheorem{lemma}[theorem]{Lemma}

\headsep=12mm \headheight=5mm 
\topmargin=-25mm \oddsidemargin=-5mm\evensidemargin=-5mm
\textheight=257truemm \textwidth=170truemm

\input{PFRIM.sty}

\title{Pseudoinverse-free randomized block iterative algorithms for consistent and inconsistent linear systems}
\author{Kui Du\thanks{School of Mathematical Sciences and Fujian Provincial Key Laboratory of Mathematical Modeling and High Performance Scientific Computing, Xiamen University, Xiamen 361005, China (kuidu@xmu.edu.cn).},\quad  Xiao-Hui Sun\thanks{School of Mathematical Sciences, Xiamen University, Xiamen 361005, China ({sunxh@stu.xmu.edu.cn}).}} 
\date{}                                           

\begin{document}
\maketitle

\begin{abstract} Randomized iterative algorithms have attracted much attention in recent years because they can approximately solve large-scale linear systems of equations without accessing the entire coefficient matrix. In this paper, we propose two novel pseudoinverse-free randomized block iterative algorithms for solving consistent and inconsistent linear systems. The proposed algorithms require two user-defined random matrices: one for row sampling and the other for column sampling. We can recover the well-known doubly stochastic Gauss--Seidel, randomized Kaczmarz, randomized coordinate descent, and randomized extended Kaczmarz algorithms by choosing appropriate random matrices used in our algorithms. Because our algorithms allow for a much wider selection of these two random matrices, a number of new specific algorithms can be obtained. We prove the linear convergence in the mean square sense of our algorithms. Numerical experiments for linear systems with synthetic and real-world coefficient matrices demonstrate the efficiency of some special cases of our algorithms. 
\vspace{.5mm} 

{\bf Keywords}. Block row sampling, block column sampling, extended block row sampling, consistent and inconsistent linear systems, linear convergence

{\bf AMS subject classifications}: 65F10, 65F20, 15A06

\end{abstract}

\section{Introduction} 
 
Row-action iterative algorithms such as the Kaczmarz algorithm \cite{Kaczmarz1937angen} (also known in computerized tomography as the algebraic reconstruction technique \cite{natterer2001mathe}) are widely used to solve a linear system of equations \beq\label{clin}{\bf Ax=b},\quad \mbf A\in\mbbr^{m\times n}, \quad \mbf b\in\mbbr^m.\eeq They do not need to compute {\it entire} matrix-vector multiplications, and each iteration only requires a sample of rows of the coefficient matrix. Numerical experiments show that using the rows of the coefficient matrix in random order rather than in their given order can often greatly improve the convergence \cite{herman1993algeb,natterer2001mathe}. In a seminal paper \cite{strohmer2009rando}, Strohmer and Vershynin proposed a randomized Kaczmarz algorithm which converges linearly in the mean square sense to a solution of a consistent linear system. The convergence rate of the randomized Kaczmarz algorithm depends only on the scaled condition number of the coefficient matrix. Many subsequent studies on the development and analysis of randomized iterative algorithms for consistent and inconsistent linear systems of equations have been triggered; see, for example, \cite{leventhal2010rando,needell2010rando,zouzias2013rando,dumitrescu2015relat,gower2015rando,ma2015conve,ma2018itera,bai2019parti,du2019tight,razaviyayn2019linea,guan2020note}. Variants based on a variety of acceleration strategies have also been proposed; see, for example, \cite{liu2016accel,bai2018greed,bai2018relax,bai2019greed,liu2019varia,necoara2019faste,du2020rando,richtarik2020stoch,zhang2020relax,moorman2021rando,rebrova2021block,liu2021greed,wu2021two}.

In this paper, we propose a {\it doubly stochastic block iterative algorithm} and an {\it extended block row sampling iterative algorithm} for solving {\it consistent and inconsistent} linear systems. Our algorithms employ two user-defined {\it discrete or continuous} random matrices: one for row sampling and the other for column sampling. By choosing appropriate random matrices in our algorithms, we recover the doubly stochastic Gauss--Seidel (DSGS) algorithm \cite{razaviyayn2019linea}, the randomized Kaczmarz (RK) algorithm \cite{strohmer2009rando}, the randomized coordinate descent (RCD) algorithm \cite{leventhal2010rando}, and the randomized extended Kaczmarz (REK) algorithm \cite{zouzias2013rando}. We emphasize that our algorithms are pseudoinverse-free and therefore different from projection-based block algorithms (which need to solve a small least-squares problem or, equivalently, apply a pseudoinverse to a vector at each iteration), for example, those in  \cite{needell2014paved,needell2015rando,gower2015rando,liu2021greed,wu2021two}. We prove the linear convergence in the mean square sense of our algorithms. As the convergence results hold for a wide range of distributions, more efficient block cases of the general algorithms can be designed. Numerical results are reported to illustrate the efficiency of some special cases of our algorithms.

{\it Main theoretical results}. For arbitrary initial guess $\mbf x^0\in\mbbr^n$, we define the vector $$\mbf x_\star^0:=\mbf A^\dag\mbf b+(\mbf I-\mbf A^\dag\mbf A)\mbf x^0,$$ which is a solution if $\bf Ax=b$ is consistent, or a least squares solution if $\bf Ax=b$ is inconsistent. We mention that $\mbf x_\star^0$ is the orthogonal projection of $\mbf x^0$ onto the set $$\{\mbf x\in\mbbr^n \ |\  \bf A^\top Ax = A^\top b\}.$$ The main theoretical results of this work are as follows. 

\bit
\item[(1)] The block {\it row} sampling iterative algorithm (one special case of the doubly stochastic block iterative algorithm; see Section 2.1) converges linearly in the mean square sense to $\mbf x_\star^0$ if $\bf Ax=b$ is consistent (Theorem \ref{rslt}) and to within a radius of $\mbf x_\star^0$ if $\bf Ax=b$ is inconsistent (Theorem \ref{rslti}). 
\item[(2)] The block {\it column} sampling iterative algorithm (another special case of the doubly stochastic block iterative algorithm; see Section 2.2) converges linearly in the mean square sense to $\mbf A^\dag\mbf b$ if $\mbf A$ has full column rank (Theorem \ref{cslt}).
\item[(3)] The extended block row sampling iterative algorithm converges linearly in the mean square sense to $\mbf x_\star^0$ for arbitrary linear system $\bf Ax=b$ (we make no assumptions about the dimensions or rank of the coefficient matrix $\mbf A$ and the system can be consistent or inconsistent; see Theorem \ref{sel}). 
\eit

{\it Organization of the paper}. In Section 2 we propose the doubly stochastic block iterative algorithm (including its special cases) and construct the convergence theory. In Section 3 we propose the extended block row sampling algorithm and prove its linear convergence for arbitrary linear systems. We report the numerical results in Section 4. Finally, we present brief concluding remarks in Section 5.

{\it Notation}. For any random variable $\bm\xi$, we use $\mbbe\bem\bm\xi\eem$ to denote the expectation of $\bm\xi$. For an integer $m\geq 1$, let $[m]:=\{1,2,3,\ldots,m\}$. For any vector $\mbf b\in\mbbr^m$, we use $\mbf b_i$, $\bf b^\top$ and $\|\mbf b\|$ to denote the $i$th entry, the transpose and the Euclidean norm of $\mbf b$, respectively. We use $\mbf I$ to denote the identity matrix whose order is clear from the context. For any matrix $\mbf A\in\mbbr^{m\times n}$, we use $\mbf A_{i,j}$, $\mbf A_{i,:}$, $\mbf A_{:,j}$, $\mbf A^\top$, $\mbf A^\dag$, $\|\mbf A\|$, $\|\mbf A\|_\rmf$, $\ran(\mbf A)$, $\rank(\mbf A)$, $\sigma_{\rm max}(\mbf A)$ and $\sigma_{\rm min}(\mbf A)$ to denote the $(i,j)$ entry, the $i$th row, the $j$th column, the transpose, the Moore--Penrose pseudoinverse, the spectral norm, the Frobenius norm, the column space, the rank, the maximum and the minimum nonzero singular values of $\mbf A$, respectively. If $\rank(\mbf A)=r$, we also denote all the nonzero singular values of $\mbf A$ by $\sigma_1(\mbf A)\geq \sigma_2(\mbf A)\geq\cdots\geq\sigma_r(\mbf A)>0$. For index sets $\mcali\subseteq[m]$ and $\mcalj\subseteq[n]$, let $\mbf A_{\mcali,:}$, $\mbf A_{:,\mcalj}$, and $\mbf A_{\mcali,\mcalj}$ denote the row submatrix indexed by $\mcali$, the column submatrix indexed by $\mcalj$, and the submatrix that lies in the rows indexed by $\mcali$ and the columns indexed by $\mcalj$, respectively. We use $|\mcali|$ to denote the cardinality of a set $\mcali \subseteq [m]$. Given a symmetric matrix $\mbf A$, we use $\lambda_{\max}(\mbf A)$ to denote the largest eigenvalue of $\mbf A$. Given two symmetric matrices $\mbf A$ and $\mbf B$, we use $\bf A\succeq B$ to denote that $\bf A-B$ is positive semidefinite.
 
{\it Preliminary}. The following lemma will be used, and its proof is straightforward.
\begin{lemma}\label{leqd} Let $\alpha>0$, $\beta>0$, and $\mbf A\in\mbbr^{m\times n}$ be any nonzero matrix with $\rank(\mbf A)=r$. For all $\mbf u\in\ran(\mbf A^\top)$, and $0\leq i\leq k$, it holds $$\|(\mbf I-\beta{\bf A^\top A})^i(\mbf I-\alpha{\bf A^\top A})^{k-i}\mbf u\|\leq\delta^k\|\mbf u\|,$$  where $$\delta=\max_{1\leq i\leq r}\{|1-\alpha\sigma_i^2(\mbf A)|,|1-\beta\sigma_i^2(\mbf A)|\}.$$
\end{lemma}

\section{The doubly stochastic block iterative algorithm}\label{ds}

Given an arbitrary initial guess $\mbf x^0\in\mbbr^n$, the $k$th iterate of the doubly stochastic block iterative (DSBI) algorithm is defined as\beq\label{dsbi}\mbf x^k=\mbf x^{k-1}-\alpha\mbf T\mbf T^\top\mbf A^\top\mbf S\mbf S^\top(\mbf A\mbf x^{k-1}-\mbf b),\eeq where the stepsize parameter $\alpha>0$, and the random parameter matrix pair $(\mbf S,\mbf T)$ is sampled independently in each iteration from a distribution $\mcald$ and satisfies $$\mbbe\bem \mbf T\mbf T^\top\mbf A^\top\mbf S\mbf S^\top \eem=\mbf A^\top.$$ We note that the random parameter matrices $\mbf S\in\mbbr^{m\times p}$ and $\mbf T\in\mbbr^{n\times q}$ can be independent or not. We also emphasize that  we do not restrict the numbers of columns of $\mbf S$ and $\mbf T$; indeed, we allow $p$ and $q$ to vary (and hence $p$ and $q$ are random variables). Let $\mbbe_{k-1}\bem\cdot\eem$ denote the conditional expectation conditioned on the first $k-1$ iterations of the DSBI algorithm. We have $$\mbbe_{k-1}\bem\mbf x^k \eem = \mbf x^{k-1}-\alpha\mbf A^\top(\mbf A\mbf x^{k-1}-\mbf b),$$ which is the update of the Landweber iteration \cite{landweber1951itera}. We note that the DSGS algorithm \cite{razaviyayn2019linea} is one special case of the DSBI algorithm. Let the index pair $(i,j)$ be randomly selected with probability $\dsp\frac{|\mbf A_{i,j}|^2}{\|\mbf A\|_\rmf^2}.$  Setting $\dsp\mbf S=\frac{\|\mbf A\|_\rmf}{|\mbf A_{i,j}|}\mbf I_{:,i}$ and $\mbf T=\mbf I_{:,j}$ in (\ref{dsbi}), we have \begin{align*}\mbbe\bem\mbf T\mbf T^\top\mbf A^\top\mbf S\mbf S^\top\eem &=\|\mbf A\|_\rmf^2\mbbe\bem\dsp\frac{\mbf I_{:,j}(\mbf I_{:,j})^\top\mbf A^\top\mbf I_{:,i}(\mbf I_{:,i})^\top}{|\mbf A_{i,j}|^2}\eem\\ &=\|\mbf A\|_\rmf^2\sum_{i=1}^m\sum_{j=1}^n\frac{\mbf I_{:,j}(\mbf I_{:,j})^\top\mbf A^\top\mbf I_{:,i}(\mbf I_{:,i})^\top}{|\mbf A_{i,j}|^2}\frac{|\mbf A_{i,j}|^2}{\|\mbf A\|_\rmf^2}=\mbf A^\top,\end{align*} and $$\mbf x^k=\mbf x^{k-1}-\alpha \frac{\|\mbf A\|_\rmf^2}{\mbf A_{i,j}} \mbf I_{:,j}(\mbf A_{i,:}\mbf x^{k-1}-\mbf b_i),$$ which is the $k$th iterate of the DSGS algorithm.

In the following, we shall present two convergence results (Theorems \ref{eiters} and \ref{general}) of the DSBI algorithm: the first is the convergence of $\|\mbbe\bem \mbf x^k\eem-\mbf x_\star^0\|$ for arbitrary linear systems, and the second is the convergence of $\mbbe\bem\|\mbf x^k-\mbf A^\dag \mbf b\|^2\eem$ for linear systems with full column rank $\mbf A$.

\begin{theorem}\label{eiters} For arbitrary $\mbf x^0\in\mbbr^n$, the $k$th iterate $\mbf x^k$ of the {\rm DSBI} algorithm satisfies \beq\label{ef}\mbbe\bem \mbf x^k-\mbf x_\star^0\eem=(\mbf I-\alpha\mbf A^\top\mbf A)^k(\mbf x^0-\mbf x_\star^0).\eeq Moreover, \beq\label{nef}\|\mbbe\bem \mbf x^k\eem-\mbf x_\star^0\|\leq\l(\max_{1\leq i\leq r}\l|1-\alpha\sigma_i^2(\mbf A)\r|\r)^k\|\mbf x^0-\mbf x_\star^0\|.\eeq
\end{theorem}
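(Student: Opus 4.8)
The plan is to prove the identity (\ref{ef}) for the expected error by a one-step recursion and then to read off the norm bound (\ref{nef}) from Lemma \ref{leqd}.

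First I would take the conditional expectation $\mbbe_{k-1}[\cdot]$ of the update (\ref{dsbi}). Since $(\mbf S,\mbf T)$ is sampled independently of the first $k-1$ iterations and satisfies $\mbbe[\mbf T\mbf T^\top\mbf A^\top\mbf S\mbf S^\top]=\mbf A^\top$, this collapses to the Landweber step $\mbbe_{k-1}[\mbf x^k]=\mbf x^{k-1}-\alpha\mbf A^\top(\mbf A\mbf x^{k-1}-\mbf b)$ already recorded in the excerpt. The decisive algebraic fact I would then establish is that $\mbf x_\star^0$ solves the normal equations, $\mbf A^\top\mbf A\mbf x_\star^0=\mbf A^\top\mbf b$. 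This follows from the Moore--Penrose identity $\mbf A^\top\mbf A\mbf A^\dag=\mbf A^\top$ (most transparently via the SVD $\mbf A=\mbf U\Sigma\mbf V^\top$) together with the fact that $\mbf A^\top\mbf A$ annihilates $(\mbf I-\mbf A^\dag\mbf A)\mbf x^0$, which lies in $\nul(\mbf A)$. Writing $\alpha\mbf A^\top\mbf b=\alpha\mbf A^\top\mbf A\mbf x_\star^0$ in the Landweber step and subtracting $\mbf x_\star^0$ from both sides then yields $\mbbe_{k-1}[\mbf x^k]-\mbf x_\star^0=(\mbf I-\alpha\mbf A^\top\mbf A)(\mbf x^{k-1}-\mbf x_\star^0)$. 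Taking full expectations via the tower property and iterating this linear recursion from step $k$ down to step $0$ gives (\ref{ef}).

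For the norm bound (\ref{nef}) I would first check that the initial error lies in the row space: from $\mbf x^0-\mbf x_\star^0=\mbf A^\dag\mbf A\mbf x^0-\mbf A^\dag\mbf b$ and $\ran(\mbf A^\dag)=\ran(\mbf A^\top)$ we get $\mbf u:=\mbf x^0-\mbf x_\star^0\in\ran(\mbf A^\top)$. Applying Lemma \ref{leqd} with $\beta=\alpha$ and $i=0$ then bounds $\|(\mbf I-\alpha\mbf A^\top\mbf A)^k\mbf u\|$ by $\big(\max_{1\leq i\leq r}|1-\alpha\sigma_i^2(\mbf A)|\big)^k\|\mbf u\|$, and since (\ref{ef}) gives $\|\mbbe[\mbf x^k]-\mbf x_\star^0\|=\|(\mbf I-\alpha\mbf A^\top\mbf A)^k\mbf u\|$, this is exactly (\ref{nef}).

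Beyond these structural points the computations are routine. The only step needing care --- and the one I regard as the crux --- is verifying the two properties of $\mbf x_\star^0$, namely $\mbf A^\top\mbf A\mbf x_\star^0=\mbf A^\top\mbf b$ and $\mbf x^0-\mbf x_\star^0\in\ran(\mbf A^\top)$, because these are exactly what make the null-space component of the error vanish and confine the mean dynamics to $\ran(\mbf A^\top)$, where $\mbf A^\top\mbf A$ has eigenvalues $\sigma_i^2(\mbf A)$. I do not anticipate any genuine difficulty once the pseudoinverse identities are handled cleanly.
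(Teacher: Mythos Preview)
Your proposal is correct and follows essentially the same route as the paper: a one-step recursion via the conditional expectation and the normal-equation identity $\mbf A^\top\mbf A\mbf x_\star^0=\mbf A^\top\mbf b$, then the tower property and unrolling to get (\ref{ef}), and finally $\mbf x^0-\mbf x_\star^0=\mbf A^\dag\mbf A\mbf x^0-\mbf A^\dag\mbf b\in\ran(\mbf A^\top)$ together with Lemma~\ref{leqd} for (\ref{nef}). The only cosmetic difference is that you spell out the pseudoinverse identities and the specific instantiation $\beta=\alpha$, $i=0$ of Lemma~\ref{leqd}, which the paper leaves implicit.
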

\begin{proof} By $\mbf A^\top\mbf A\mbf x_\star^0=\mbf A^\top \mbf b$ and straightforward calculations, we have 
\begin{align*}
\mbbe_{k-1}\bem\mbf x^k-\mbf x_\star^0\eem 
&=\mbf x^{k-1}-\mbf x_\star^0-\alpha\mbbe\bem\mbf T\mbf T^\top\mbf A^\top\mbf S\mbf S^\top\eem(\mbf A\mbf x^{k-1}-\mbf b)\\ 
&=\mbf x^{k-1}-\mbf x_\star^0-\alpha\mbf A^\top(\mbf A\mbf x^{k-1}-\mbf b) \\ 
&=\mbf x^{k-1}-\mbf x_\star^0-\alpha\mbf A^\top\mbf A(\mbf x^{k-1}-\mbf x_\star^0)\\ 
&=(\mbf I-\alpha\mbf A^\top\mbf A)(\mbf x^{k-1}-\mbf x_\star^0).\end{align*}
Then, by the law of total expectation, we have $$\mbbe\bem\mbf x^k-\mbf x_\star^0\eem=(\mbf I-\alpha\mbf A^\top\mbf A)\mbbe\bem\mbf x^{k-1}-\mbf x_\star^0\eem.$$ Unrolling the recurrence yields the formula (\ref{ef}). By $\mbf x^0-\mbf x_\star^0=\mbf A^\dag\mbf A\mbf x^0-\mbf A^\dag\mbf b\in\ran(\mbf A^\top)$ and Lemma \ref{leqd}, we obtain the estimate (\ref{nef}).
\end{proof}
\begin{remark}
	In Theorem \ref{eiters}, no assumptions about the dimensions or rank of $\mbf A$ are assumed, and the system $\bf Ax=b$ can be consistent or inconsistent. If $0<\alpha<2/\sigma_{\max}^2(\mbf A)$, then $\dsp\max_{1\leq i\leq r}\l|1-\alpha\sigma_i^2(\mbf A)\r|<1$. This means $\mbf x^k$ is an asymptotically unbiased estimator for $\mbf x_\star^0$.
\end{remark}

\begin{theorem}\label{general} Let $\mbf A$ have full column rank. For arbitrary $\mbf x^0\in\mbbr^n$ and $\ve>0$, if $0<\alpha<\dsp\frac{2\sigma_{\min}^2(\mbf A)}{(1+\ve)\beta}$, then the $k$th iterate $\mbf x^k$ of the {\rm DSBI} algorithm satisfies $$\mbbe\bem\|\mbf x^k-\mbf A^\dag\mbf b\|^2\eem\leq\eta^k\|\mbf x^0-\mbf A^\dag\mbf b\|^2+\frac{\alpha(1+1/\ve)\gamma(1-\eta^k)}{2\sigma_{\min}^2(\mbf A)-(1+\ve)\alpha\beta},$$ where $$\eta=1-2\alpha\sigma_{\min}^2(\mbf A)+(1+\ve)\alpha^2\beta, \quad  \quad \beta=\|\mbbe\bem\mbf A^\top \mbf S \mbf S^\top\mbf A \mbf  T\mbf T^\top \mbf T\mbf T^\top\mbf A^\top\mbf S\mbf S^\top\mbf A\eem\|,$$ and $$ \gamma=\mbbe\bem\|\mbf T\mbf T^\top\mbf A^\top\mbf S\mbf S^\top(\mbf A\mbf A^\dag\mbf b-\mbf b)\|^2\eem.$$ 
\end{theorem}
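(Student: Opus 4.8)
The plan is to track the squared error $\mbf e^k:=\mbf x^k-\mbf A^\dag\mbf b$ (note that $\mbf x_\star^0=\mbf A^\dag\mbf b$ here, since full column rank gives $\mbf A^\dag\mbf A=\mbf I$). Writing $\mbf M:=\mbf T\mbf T^\top\mbf A^\top\mbf S\mbf S^\top$ and $\mbf r:=\mbf A\mbf A^\dag\mbf b-\mbf b$ for the deterministic least-squares residual, I would first split the working residual as $\mbf A\mbf x^{k-1}-\mbf b=\mbf A\mbf e^{k-1}+\mbf r$, so that (\ref{dsbi}) becomes $\mbf e^k=\mbf e^{k-1}-\alpha\mbf M(\mbf A\mbf e^{k-1}+\mbf r)$. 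Expanding $\|\mbf e^k\|^2$ and applying the conditional expectation $\mbbe_{k-1}$ produces a linear term $-2\alpha\langle\mbf e^{k-1},\mbbe[\mbf M](\mbf A\mbf e^{k-1}+\mbf r)\rangle$ and a quadratic term $\alpha^2\mbbe_{k-1}\|\mbf M(\mbf A\mbf e^{k-1}+\mbf r)\|^2$.

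The key simplification comes from $\mbbe[\mbf M]=\mbf A^\top$ together with the orthogonality $\mbf A^\top\mbf r=\mbf A^\top\mbf A\mbf A^\dag\mbf b-\mbf A^\top\mbf b=\mbf 0$: the linear term collapses to $-2\alpha\|\mbf A\mbf e^{k-1}\|^2$, with the inconsistency $\mbf r$ cancelling exactly. The quadratic term is the crux. The vectors $\mbf M\mbf A\mbf e^{k-1}$ and $\mbf M\mbf r$ are coupled through the same random $\mbf M$, so I would not expand the inner product directly; instead I would decouple them via the weighted inequality $\|\mbf u+\mbf v\|^2\le(1+\ve)\|\mbf u\|^2+(1+1/\ve)\|\mbf v\|^2$ (a form of Young's inequality). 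This is exactly where the factors $1+\ve$ and $1+1/\ve$ in the statement originate. Then $\mbbe_{k-1}\|\mbf M\mbf A\mbf e^{k-1}\|^2=(\mbf e^{k-1})^\top\mbbe[(\mbf M\mbf A)^\top\mbf M\mbf A]\mbf e^{k-1}\le\beta\|\mbf e^{k-1}\|^2$ by the definition of $\beta$ (one checks $(\mbf M\mbf A)^\top\mbf M\mbf A=\mbf A^\top\mbf S\mbf S^\top\mbf A\mbf T\mbf T^\top\mbf T\mbf T^\top\mbf A^\top\mbf S\mbf S^\top\mbf A$), while $\mbbe_{k-1}\|\mbf M\mbf r\|^2=\gamma$ since $(\mbf S,\mbf T)$ is drawn afresh from $\mcald$.

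Collecting terms and invoking full column rank through $\|\mbf A\mbf e^{k-1}\|^2\ge\sigma_{\min}^2(\mbf A)\|\mbf e^{k-1}\|^2$ yields the one-step contraction
$$\mbbe_{k-1}\|\mbf e^k\|^2\le\eta\|\mbf e^{k-1}\|^2+\alpha^2(1+1/\ve)\gamma,\qquad \eta=1-2\alpha\sigma_{\min}^2(\mbf A)+(1+\ve)\alpha^2\beta.$$
Taking full expectations, unrolling the recurrence, and summing the geometric series $\sum_{i=0}^{k-1}\eta^i=(1-\eta^k)/(1-\eta)$ with $1-\eta=\alpha(2\sigma_{\min}^2(\mbf A)-(1+\ve)\alpha\beta)$ gives the stated bound after cancelling one factor of $\alpha$.

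The main obstacle is the coupling cross term just discussed; the Young-type splitting is what keeps the analysis pseudoinverse-free and is the reason the free parameter $\ve$ appears. A second point I would verify is that the recurrence may legitimately be unrolled, i.e. that $0\le\eta<1$ on the prescribed step-size range. The upper bound $\eta<1$ is immediate from $0<\alpha<2\sigma_{\min}^2(\mbf A)/((1+\ve)\beta)$, and $\eta\ge0$ follows because $\mbbe[(\mbf M\mbf A)^\top\mbf M\mbf A]\succeq(\mbbe[\mbf M]\mbf A)^\top(\mbbe[\mbf M]\mbf A)=(\mbf A^\top\mbf A)^2$ forces $\beta\ge\sigma_{\max}^4(\mbf A)\ge\sigma_{\min}^4(\mbf A)$, so the minimum over $\alpha$ of the quadratic $\eta(\alpha)$ is nonnegative.
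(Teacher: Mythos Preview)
Your proposal is correct and follows essentially the same route as the paper: expand $\|\mbf e^k\|^2$, use $\mbbe[\mbf M]=\mbf A^\top$ and $\mbf A^\top\mbf r=\mbf 0$ to reduce the cross term to $-2\alpha\|\mbf A\mbf e^{k-1}\|^2$, apply the Young-type splitting $\|\mbf u+\mbf v\|^2\le(1+\ve)\|\mbf u\|^2+(1+1/\ve)\|\mbf v\|^2$ to the quadratic term, bound the two pieces by $\beta\|\mbf e^{k-1}\|^2$ and $\gamma$, invoke $\|\mbf A\mbf e^{k-1}\|^2\ge\sigma_{\min}^2(\mbf A)\|\mbf e^{k-1}\|^2$, and unroll the resulting affine recursion. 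Your additional verification that $\eta\ge 0$ (via $\mbbe[(\mbf M\mbf A)^\top\mbf M\mbf A]\succeq(\mbf A^\top\mbf A)^2$, hence $\beta\ge\sigma_{\min}^4(\mbf A)$) is a point the paper leaves implicit but which is indeed needed to iterate the one-step inequality.
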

\begin{proof} Straightforward calculations yield 
\begin{align}
\|\mbf x^k-\mbf A^\dag\mbf b\|^2 
& =\|\mbf x^{k-1}-\mbf A^\dag\mbf b\|^2-2\alpha(\mbf x^{k-1}-\mbf A^\dag\mbf b)^\top	\mbf T\mbf T^\top\mbf A^\top\mbf S\mbf S^\top(\mbf A\mbf x^{k-1}-\mbf b)\nn \\ 
&\quad +\alpha^2\|\mbf T\mbf T^\top\mbf A^\top\mbf S\mbf S^\top(\mbf A\mbf x^{k-1}-\mbf b)\|^2. \label{nsl}\end{align}
By (\ref{nsl}), $\mbbe\bem \mbf T\mbf T^\top\mbf A^\top\mbf S\mbf S^\top \eem=\mbf A^\top$, and $\mbf A^\top\mbf A\mbf A^\dag\mbf b=\mbf A^\top\mbf b$, we have \begin{align}
\mbbe_{k-1}\bem\|\mbf x^k-\mbf A^\dag\mbf b\|^2\eem
& =\|\mbf x^{k-1}-\mbf A^\dag\mbf b\|^2-2\alpha(\mbf x^{k-1}-\mbf A^\dag\mbf b)^\top	 \mbf A^\top \mbf A(\mbf x^{k-1}-\mbf A^\dag\mbf b)\nn \\ 
&\quad +\alpha^2\mbbe_{k-1}\bem\|\mbf T\mbf T^\top\mbf A^\top\mbf S\mbf S^\top(\mbf A\mbf x^{k-1}-\mbf b)\|^2\eem.\label{km1}
\end{align} It follows from $\mbf A$ has full column rank that
\beq\label{nm1}(\mbf x^{k-1}-\mbf A^\dag\mbf b)^\top\mbf A^\top \mbf A(\mbf x^{k-1}-\mbf A^\dag\mbf b)\geq\sigma_{\min}^2(\mbf A)\|\mbf x^{k-1}-\mbf A^\dag\mbf b\|^2. \eeq
By triangular inequality and Young's inequality, we have
\begin{align} &\ \quad \|\mbf T\mbf T^\top\mbf A^\top\mbf S\mbf S^\top(\mbf A\mbf x^{k-1}-\mbf b)\|^2= \|\mbf T\mbf T^\top\mbf A^\top\mbf S\mbf S^\top(\mbf A\mbf x^{k-1}-\mbf A\mbf A^\dag\mbf b+\mbf A\mbf A^\dag\mbf b-\mbf b)\|^2\nn \\
&\leq (\|\mbf T\mbf T^\top\mbf A^\top\mbf S\mbf S^\top(\mbf A\mbf x^{k-1}-\mbf A\mbf A^\dag\mbf b)\|+\|\mbf T\mbf T^\top\mbf A^\top\mbf S\mbf S^\top(\mbf A\mbf A^\dag\mbf b-\mbf b)\|)^2\nn \\
&\leq (1+\ve)\|\mbf T\mbf T^\top\mbf A^\top\mbf S\mbf S^\top\mbf A(\mbf x^{k-1}-\mbf A^\dag\mbf b)\|^2+ (1+ 1/\ve )\|\mbf T\mbf T^\top\mbf A^\top\mbf S\mbf S^\top(\mbf A\mbf A^\dag\mbf b-\mbf b)\|^2.\label{last1}
\end{align}  Note that \begin{align}&\quad\ \mbbe_{k-1}\bem\|\mbf T\mbf T^\top\mbf A^\top\mbf S\mbf S^\top\mbf A(\mbf x^{k-1}-\mbf A^\dag\mbf b)\|^2\eem\nn \\ &=(\mbf x^{k-1}-\mbf A^\dag\mbf b)^\top\mbbe\bem\mbf A^\top \mbf S \mbf S^\top\mbf A \mbf  T\mbf T^\top \mbf T\mbf T^\top\mbf A^\top\mbf S\mbf S^\top\mbf A\eem(\mbf x^{k-1}-\mbf A^\dag\mbf b)\nn \\ &\leq \beta\|\mbf x^{k-1}-\mbf A^\dag\mbf b\|^2.\label{beta}\end{align}
Combining (\ref{km1}), (\ref{nm1}), (\ref{last1}), and (\ref{beta}) yields
\begin{align*}\mbbe_{k-1}\bem\|\mbf x^k-\mbf A^\dag\mbf b\|^2\eem & \leq \|\mbf x^{k-1}-\mbf A^\dag\mbf b\|^2-2\alpha\sigma_{\min}^2(\mbf A)\|\mbf x^{k-1}-\mbf A^\dag\mbf b\|^2\\
&\quad +(1+\ve)\alpha^2\beta\|\mbf x^{k-1}-\mbf A^\dag\mbf b\|^2+ (1+1/\ve)\alpha^2\gamma\\
&= \eta\|\mbf x^{k-1}-\mbf A^\dag\mbf b\|^2 +(1+1/\ve)\alpha^2\gamma. 
\end{align*}
Then the expected squared norm of the error can be bounded by 
\begin{align*}
\mbbe\bem\|\mbf x^k-\mbf A^\dag\mbf b\|^2\eem &\leq \eta\mbbe\bem\|\mbf x^{k-1}-\mbf A^\dag\mbf b\|^2\eem +(1+1/\ve)\alpha^2\gamma \\
&\leq \eta^k\|\mbf x^0-\mbf A^\dag\mbf b\|^2+(1+1/\ve)\alpha^2\gamma\sum_{i=0}^{k-1}\eta^i\\ 
&= \eta^k\|\mbf x^0-\mbf A^\dag\mbf b\|^2+\frac{\alpha^2(1+1/\ve)\gamma(1-\eta^k)}{1-\eta}\\ 
&=\eta^k\|\mbf x^0-\mbf A^\dag\mbf b\|^2+\frac{\alpha(1+1/\ve)\gamma(1-\eta^k)}{2\sigma_{\min}^2(\mbf A)-(1+\ve)\alpha\beta}.
\end{align*} This completes the proof. 
\end{proof}

Theorem \ref{general} means that, for a full column rank consistent linear system (for which we have $\gamma=0$), the {\rm DSBI} algorithm with sufficiently small $\alpha$ converges linearly to the unique solution in the mean square sense. But, on the other hand, a small parameter $\alpha$ straightforwardly implies very slow convergence $(\eta\approx 1)$.  If more about the random parameter matrices $\mbf S$ and $\mbf T$ are available, then improved convergence results can be obtained.  In the following subsections, we discuss the case $\mbf T=\mbf I$ and the case $\mbf S=\mbf I$, respectively.

\subsection{Block row sampling} In this subsection, we consider the case $\mbf T=\mbf I$ and refer to  the resulting algorithm as the block row sampling iterative (BRSI) algorithm. Given an arbitrary initial guess $\mbf x^0\in\mbbr^n$, the $k$th iterate of the BRSI algorithm is  \beq\label{brs}\mbf x^k=\mbf x^{k-1}-\alpha_{\rm r}\mbf A^\top\mbf S\mbf S^\top(\mbf A\mbf x^{k-1}-\mbf b),\eeq where the stepsize parameter $\alpha_{\rm r}>0$, and the random parameter matrix $\mbf S$ is sampled independently in each iteration from a distribution $\mcald_{\rm r}$ and satisfies $\mbbe\bem \mbf S\mbf S^\top \eem=\mbf I.$ Various choices for $\mbf S$ can be used, e.g., see \cite{chung2017stoch}. 
 
In the following, we shall present the convergence of $\mbbe\bem\|\mbf x^k-\mbf x_\star^0\|^2\eem$ for the consistent case and the inconsistent case in Theorems \ref{rslt} and \ref{rslti}, respectively. For the consistent case, Theorem \ref{rslt} shows that the BRSI algorithm converges linearly to a solution. For the inconsistent case, Theorem \ref{rslti} shows that the BRSI algorithm can only converge to within a radius (convergence horizon) of a least squares solution. Throughout, we define $$\lambda_{\max}^{\rm r}=\max_{\mbf S\sim\mcald_{\rm r}}\lambda_{\max}(\mbf A^\top \mbf S\mbf S^\top\mbf A).$$

\begin{theorem}\label{rslt} Assume that $0<\alpha_{\rm r}<2/\lambda_{\max}^{\rm r}$. If $\bf Ax=b$ is consistent, then for arbitrary $\mbf x^0\in\mbbr^n$, the $k$th iterate $\mbf x^k$ of the {\rm BRSI} algorithm satisfies $$\mbbe\bem \|\mbf x^k-\mbf x_\star^0\|^2\eem\leq \eta_{\rm r}^k\|\mbf x^0-\mbf x_\star^0\|^2,$$ where $$\eta_{\rm r}=1-\alpha_{\rm r}(2-\alpha_{\rm r}\lambda_{\max}^{\rm r})\sigma_{\min}^2(\mbf A).$$
\end{theorem}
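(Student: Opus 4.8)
The plan is to analyze the expected squared error $\mbbe\bem\|\mbf x^k-\mbf x_\star^0\|^2\eem$ by conditioning on the first $k-1$ iterations, deriving a one-step contraction, and then unrolling the recurrence. First I would expand $\|\mbf x^k-\mbf x_\star^0\|^2$ using the BRSI update (\ref{brs}) with $\alpha_{\rm r}$. Writing $\mbf e^{k-1}:=\mbf x^{k-1}-\mbf x_\star^0$, the update gives $\mbf x^k-\mbf x_\star^0=\mbf e^{k-1}-\alpha_{\rm r}\mbf A^\top\mbf S\mbf S^\top(\mbf A\mbf x^{k-1}-\mbf b)$. The crucial simplification is that in the consistent case $\mbf A\mbf x_\star^0=\mbf b$ (since $\mbf x_\star^0$ is a genuine solution when $\bf Ax=b$ is consistent), so $\mbf A\mbf x^{k-1}-\mbf b=\mbf A\mbf e^{k-1}$. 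This removes the inhomogeneous term entirely and is exactly why the consistent case yields a clean contraction with no convergence horizon, in contrast to Theorem \ref{general}. Thus $$\|\mbf x^k-\mbf x_\star^0\|^2=\|\mbf e^{k-1}\|^2-2\alpha_{\rm r}(\mbf e^{k-1})^\top\mbf A^\top\mbf S\mbf S^\top\mbf A\mbf e^{k-1}+\alpha_{\rm r}^2\|\mbf A^\top\mbf S\mbf S^\top\mbf A\mbf e^{k-1}\|^2.$$

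Next I would take the conditional expectation $\mbbe_{k-1}$ and use $\mbbe\bem\mbf S\mbf S^\top\eem=\mbf I$ on the linear middle term, which turns it into $-2\alpha_{\rm r}(\mbf e^{k-1})^\top\mbf A^\top\mbf A\mbf e^{k-1}$. The main work is bounding the quadratic term $\mbbe_{k-1}\bem\|\mbf A^\top\mbf S\mbf S^\top\mbf A\mbf e^{k-1}\|^2\eem$. Here I would use the definition of $\lambda_{\max}^{\rm r}$: since $\mbf A^\top\mbf S\mbf S^\top\mbf A\preceq\lambda_{\max}^{\rm r}\mbf I$ pointwise over the support of $\mcald_{\rm r}$, the matrix $(\mbf A^\top\mbf S\mbf S^\top\mbf A)^2\preceq\lambda_{\max}^{\rm r}\,\mbf A^\top\mbf S\mbf S^\top\mbf A$ holds for each sampled $\mbf S$, so that $$\mbbe_{k-1}\bem\|\mbf A^\top\mbf S\mbf S^\top\mbf A\mbf e^{k-1}\|^2\eem=(\mbf e^{k-1})^\top\mbbe\bem(\mbf A^\top\mbf S\mbf S^\top\mbf A)^2\eem\mbf e^{k-1}\leq\lambda_{\max}^{\rm r}(\mbf e^{k-1})^\top\mbf A^\top\mbf A\mbf e^{k-1}.$$ Substituting this bound collapses the two $\mbf A^\top\mbf A$-quadratic terms into $-\alpha_{\rm r}(2-\alpha_{\rm r}\lambda_{\max}^{\rm r})(\mbf e^{k-1})^\top\mbf A^\top\mbf A\mbf e^{k-1}$, and the stepsize condition $\alpha_{\rm r}<2/\lambda_{\max}^{\rm r}$ guarantees the scalar prefactor $2-\alpha_{\rm r}\lambda_{\max}^{\rm r}$ is positive.

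I would then invoke the key observation that the error lives in $\ran(\mbf A^\top)$: since $\mbf x^0-\mbf x_\star^0\in\ran(\mbf A^\top)$ and each BRSI update adds a vector in $\ran(\mbf A^\top)$, induction gives $\mbf e^{k-1}\in\ran(\mbf A^\top)$. On this subspace the Rayleigh quotient of $\mbf A^\top\mbf A$ is bounded below by $\sigma_{\min}^2(\mbf A)$, yielding $(\mbf e^{k-1})^\top\mbf A^\top\mbf A\mbf e^{k-1}\geq\sigma_{\min}^2(\mbf A)\|\mbf e^{k-1}\|^2$. Combining everything produces the one-step bound $\mbbe_{k-1}\bem\|\mbf x^k-\mbf x_\star^0\|^2\eem\leq\eta_{\rm r}\|\mbf e^{k-1}\|^2$ with $\eta_{\rm r}=1-\alpha_{\rm r}(2-\alpha_{\rm r}\lambda_{\max}^{\rm r})\sigma_{\min}^2(\mbf A)$. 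Taking total expectation and unrolling finishes the proof. I expect the main obstacle to be the quadratic-term estimate: specifically, justifying $\mbbe\bem(\mbf A^\top\mbf S\mbf S^\top\mbf A)^2\eem\preceq\lambda_{\max}^{\rm r}\,\mbf A^\top\mbf A$. This hinges on the pointwise semidefinite inequality $(\mbf A^\top\mbf S\mbf S^\top\mbf A)^2\preceq\lambda_{\max}^{\rm r}\,\mbf A^\top\mbf S\mbf S^\top\mbf A$ combined with $\mbbe\bem\mbf A^\top\mbf S\mbf S^\top\mbf A\eem=\mbf A^\top\mbf A$; care is needed to confirm this monotonicity passes through the expectation correctly, and to ensure that the restriction to $\ran(\mbf A^\top)$ (where $\sigma_{\min}$ is the relevant lower bound rather than zero) is applied only after the quadratic bound, not before.
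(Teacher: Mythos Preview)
Your proposal is correct and follows essentially the same approach as the paper's proof: expand $\|\mbf x^k-\mbf x_\star^0\|^2$ via the update, use consistency ($\mbf A\mbf x_\star^0=\mbf b$) to write the residual as $\mbf A\mbf e^{k-1}$, bound the quadratic term pointwise via $(\mbf A^\top\mbf S\mbf S^\top\mbf A)^2\preceq\lambda_{\max}^{\rm r}\,\mbf A^\top\mbf S\mbf S^\top\mbf A$, take conditional expectation using $\mbbe[\mbf S\mbf S^\top]=\mbf I$, and then apply the $\sigma_{\min}^2(\mbf A)$ lower bound on $\ran(\mbf A^\top)$ (established by induction) before unrolling. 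Your stated concern about passing the semidefinite inequality through the expectation is not an issue, since the pointwise bound holds for every sampled $\mbf S$ and expectation preserves the Loewner order.
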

\begin{proof} It follows from $\mbf A\mbf x_\star^0=\mbf b$ and
	\beq\label{rec}\mbf x^k-\mbf x_\star^0
= \mbf x^{k-1}-\mbf x_\star^0-\alpha_{\rm r}\mbf A^\top\mbf S\mbf S^\top(\mbf A\mbf x^{k-1}-\mbf b)\eeq that
\begin{align*}
\| \mbf x^k-\mbf x_\star^0\|^2
&=\| \mbf x^{k-1}-\mbf x_\star^0\|^2-2\alpha_{\rm r}(\mbf x^{k-1}-\mbf x_\star^0)^\top\mbf A^\top \mbf S\mbf S^\top \mbf A(\mbf x^{k-1}-\mbf x_\star^0)\\
&\ \quad + \alpha_{\rm r}^2\|\mbf A^\top\mbf S\mbf S^\top\mbf A(\mbf x^{k-1}-\mbf x_\star^0)\|^2.
\end{align*}
Note that for any $\mbf H\succeq\mbf 0$, it holds $\lambda_{\max}(\mbf H)\mbf H\succeq\mbf H^2$. Then we have
\begin{align}
\|\mbf A^\top\mbf S\mbf S^\top\mbf A(\mbf x^{k-1}-\mbf x_\star^0)\|^2&=(\mbf x^{k-1}-\mbf x_\star^0)^\top (\mbf A^\top\mbf S\mbf S^\top\mbf A)^2 (\mbf x^{k-1}-\mbf x_\star^0)\nn\\
&\leq \lambda_{\max}(\mbf A^\top\mbf S\mbf S^\top\mbf A) (\mbf x^{k-1}-\mbf x_\star^0)^\top \mbf A^\top\mbf S\mbf S^\top\mbf A (\mbf x^{k-1}-\mbf x_\star^0)\nn\\ 
&\leq\lambda_{\max}^{\rm r}(\mbf x^{k-1}-\mbf x_\star^0)^\top \mbf A^\top\mbf S\mbf S^\top\mbf A (\mbf x^{k-1}-\mbf x_\star^0).\label{no2}
\end{align}
By $\mbf x^0-\mbf x_\star^0=\mbf A^\dag(\mbf A\mbf x^0-\mbf b)\in\ran(\mbf A^\top)$, $\mbf A^\top\mbf S\mbf S^\top\mbf A(\mbf x^{k-1}-\mbf x_\star^0)\in\ran(\mbf A^\top)$, and (\ref{rec}), we can prove that $\mbf x^k-\mbf x_\star^0\in\ran(\mbf A^\top)$ by induction. Therefore, 
\begin{align*}
\mbbe_{k-1}\bem\| \mbf x^k-\mbf x_\star^0\|^2\eem
&\leq\| \mbf x^{k-1}-\mbf x_\star^0\|^2-2\alpha_{\rm r}(\mbf x^{k-1}-\mbf x_\star^0)^\top\mbf A^\top\mbf A (\mbf x^{k-1}-\mbf x_\star^0)\\
&\ \quad + \alpha_{\rm r}^2\lambda_{\max}^{\rm r}(\mbf x^{k-1}-\mbf x_\star^0)^\top \mbf A^\top\mbf A(\mbf x^{k-1}-\mbf x_\star^0)\\
&\leq(1-\alpha_{\rm r}(2-\alpha_{\rm r}\lambda_{\max}^{\rm r})\sigma_{\min}^2(\mbf A))\| \mbf x^{k-1}-\mbf x_\star^0\|^2.
\end{align*} In the last inequality, we use the facts that $-\alpha_{\rm r}(2-\alpha_{\rm r}\lambda_{\max}^{\rm r})<0$, and for all $\mbf u\in\ran(\mbf A^\top)$, it holds $\mbf u^\top\mbf A^\top\mbf A\mbf u\geq\sigma_{\min}^2(\mbf A)\|\mbf u\|^2$.
Next, by the law of total expectation, we have $$\mbbe\bem\| \mbf x^k-\mbf x_\star^0\|^2\eem\leq\eta_{\rm r}\mbbe\bem\| \mbf x^{k-1}-\mbf x_\star^0\|^2\eem.$$
Unrolling the recurrence yields the result.
\end{proof}

According to Theorem \ref{rslt}, the best convergence rate ($\eta_{\rm r}=1-\sigma_{\min}^2(\mbf A)/\lambda_{\max}^{\rm r}$) of the BRSI algorithm is achieved when $\alpha_{\rm r}=1/\lambda_{\max}^{\rm r}$. However, the proof of Theorem \ref{rslt} uses the worst-case estimates, so the convergence bound may be pessimistic, and in practical applications it may not precisely measure the actual convergence rate of the BRSI algorithm. 

\begin{theorem}\label{rslti} Assume that $\ve>0$ and $0<\alpha_{\rm r}<\dsp\frac{2}{(1+\ve)\lambda_{\max}^{\rm r}}$. If $\bf Ax=b$ is inconsistent, then for arbitrary $\mbf x^0\in\mbbr^n$, the $k$th iterate $\mbf x^k$ of the {\rm BRSI} algorithm satisfies
$$\mbbe\bem \|\mbf x^k-\mbf x_\star^0\|^2\eem\leq \eta_\ve^k\|\mbf x^0-\mbf x_\star^0\|^2+\frac{\alpha_{\rm r}(1+1/\ve)\gamma(1-\eta_\ve^k)}{(2-\alpha_{\rm r}(1+\ve)\lambda_{\max}^{\rm r})\sigma_{\min}^2(\mbf A)},$$
where  $$\eta_\ve=1-\alpha_{\rm r}(2-\alpha_{\rm r}(1+\ve)\lambda_{\max}^{\rm r})\sigma_{\min}^2(\mbf A)$$ and $$\gamma=\mbbe\bem\|\mbf A^\top\mbf S\mbf S^\top(\mbf A\mbf A^\dag\mbf b-\mbf b)\|^2\eem.$$
\end{theorem}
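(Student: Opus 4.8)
The plan is to mirror the contraction argument in the proof of Theorem \ref{rslt}, but to retain the residual component $\mbf A\mbf A^\dag\mbf b-\mbf b$ that no longer vanishes in the inconsistent case, handling it exactly as the $\gamma$-term is handled in the proof of Theorem \ref{general}. Writing $\mbf e^{k-1}:=\mbf x^{k-1}-\mbf x_\star^0$ for brevity, I would first record the two identities that drive everything: since $\mbf A(\mbf I-\mbf A^\dag\mbf A)=\mbf 0$ we have $\mbf A\mbf x_\star^0=\mbf A\mbf A^\dag\mbf b$, so the residual splits as $\mbf A\mbf x^{k-1}-\mbf b=\mbf A\mbf e^{k-1}+(\mbf A\mbf A^\dag\mbf b-\mbf b)$; and the extra piece is orthogonal to $\ran(\mbf A)$, i.e. $\mbf A^\top(\mbf A\mbf A^\dag\mbf b-\mbf b)=\mbf 0$. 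Then I would expand $\|\mbf e^k\|^2$ from the recurrence (\ref{brs}) into the usual three pieces (old error, cross term, quadratic term), as in Theorem \ref{rslt}.

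For the cross term no splitting is needed: taking conditional expectation and using $\mbbe\bem\mbf S\mbf S^\top\eem=\mbf I$ turns it into $-2\alpha_{\rm r}(\mbf e^{k-1})^\top\mbf A^\top(\mbf A\mbf x^{k-1}-\mbf b)$, and because $\mbf A^\top$ annihilates the extra residual piece this collapses to $-2\alpha_{\rm r}(\mbf e^{k-1})^\top\mbf A^\top\mbf A\mbf e^{k-1}$, exactly as in the consistent case. For the quadratic term I would split the residual along the decomposition above and apply Young's inequality with parameter $\ve$, producing $(1+\ve)\|\mbf A^\top\mbf S\mbf S^\top\mbf A\mbf e^{k-1}\|^2$ and $(1+1/\ve)\|\mbf A^\top\mbf S\mbf S^\top(\mbf A\mbf A^\dag\mbf b-\mbf b)\|^2$. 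The first is controlled by the estimate (\ref{no2}), i.e. $\lambda_{\max}(\mbf H)\mbf H\succeq\mbf H^2$ with $\mbf H=\mbf A^\top\mbf S\mbf S^\top\mbf A$, so its conditional expectation is at most $\lambda_{\max}^{\rm r}(\mbf e^{k-1})^\top\mbf A^\top\mbf A\mbf e^{k-1}$; the conditional expectation of the second is precisely $\gamma$.

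Collecting these yields the one-step bound
\beq\label{onestep}\mbbe_{k-1}\bem\|\mbf e^k\|^2\eem\leq\|\mbf e^{k-1}\|^2-\alpha_{\rm r}(2-\alpha_{\rm r}(1+\ve)\lambda_{\max}^{\rm r})(\mbf e^{k-1})^\top\mbf A^\top\mbf A\mbf e^{k-1}+(1+1/\ve)\alpha_{\rm r}^2\gamma.\eeq
To replace the quadratic form by $\sigma_{\min}^2(\mbf A)\|\mbf e^{k-1}\|^2$ I would, as in Theorem \ref{rslt}, show by induction that $\mbf e^{k}\in\ran(\mbf A^\top)$: the base case is $\mbf x^0-\mbf x_\star^0\in\ran(\mbf A^\top)$, and the induction step uses that both correction terms in the recurrence lie in $\ran(\mbf A^\top)$, including the new term $\mbf A^\top\mbf S\mbf S^\top(\mbf A\mbf A^\dag\mbf b-\mbf b)$, which is manifestly of the form $\mbf A^\top(\cdot)$. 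The hypothesis $0<\alpha_{\rm r}<2/((1+\ve)\lambda_{\max}^{\rm r})$ makes the coefficient $\alpha_{\rm r}(2-\alpha_{\rm r}(1+\ve)\lambda_{\max}^{\rm r})$ positive, so (\ref{onestep}) becomes $\mbbe_{k-1}\bem\|\mbf e^k\|^2\eem\leq\eta_\ve\|\mbf e^{k-1}\|^2+(1+1/\ve)\alpha_{\rm r}^2\gamma$.

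Finally I would take full expectations, unroll the affine recurrence, sum the geometric series $\sum_{i=0}^{k-1}\eta_\ve^i=(1-\eta_\ve^k)/(1-\eta_\ve)$, and substitute $1-\eta_\ve=\alpha_{\rm r}(2-\alpha_{\rm r}(1+\ve)\lambda_{\max}^{\rm r})\sigma_{\min}^2(\mbf A)$; one factor of $\alpha_{\rm r}$ then cancels to match the stated convergence-horizon constant. I do not anticipate a genuine obstacle, since the argument is a synthesis of two proofs already given; the only point requiring care is the $\ran(\mbf A^\top)$-invariance in the inconsistent setting, which is precisely what legitimizes passing from the quadratic form to $\sigma_{\min}^2(\mbf A)\|\mbf e^{k-1}\|^2$ even though $\mbf A$ is not assumed to have full column rank.
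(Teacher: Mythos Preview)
Your proposal is correct and follows essentially the same route as the paper's proof: expand $\|\mbf x^k-\mbf x_\star^0\|^2$, handle the cross term via $\mbbe\bem\mbf S\mbf S^\top\eem=\mbf I$ and $\mbf A^\top(\mbf A\mbf A^\dag\mbf b-\mbf b)=\mbf 0$, split the quadratic term by Young's inequality, bound the first piece via (\ref{no2}), and unroll the resulting affine recurrence. Your explicit verification that $\mbf e^k\in\ran(\mbf A^\top)$ still holds in the inconsistent case (because the extra correction $\mbf A^\top\mbf S\mbf S^\top(\mbf A\mbf A^\dag\mbf b-\mbf b)$ lies in $\ran(\mbf A^\top)$) is in fact more careful than the paper, which uses this step without re-justifying it after Theorem~\ref{rslt}.
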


\begin{proof} It follows from
	$$\mbf x^k-\mbf x_\star^0=\mbf x^{k-1}-\mbf x_\star^0-\alpha_{\rm r} \mbf A^\top\mbf S\mbf S^\top(\mbf A\mbf x^{k-1}-\mbf b)$$ that
\begin{align}
\|\mbf x^k-\mbf x_\star^0\|^2 
& =\|\mbf x^{k-1}-\mbf x_\star^0\|^2-2\alpha_{\rm r}(\mbf x^{k-1}-\mbf x_\star^0)^\top \mbf A^\top\mbf S\mbf S^\top(\mbf A\mbf x^{k-1}-\mbf b)\nn \\ 
&\quad +\alpha_{\rm r}^2\| \mbf A^\top\mbf S\mbf S^\top(\mbf A\mbf x^{k-1}-\mbf b)\|^2 \label{nsl2}\end{align}
By $\mbf A\mbf x_\star^0=\mbf A\mbf A^\dag\mbf b$, triangle inequality, and Young's inequality, we have
\begin{align} &\ \quad \| \mbf A^\top\mbf S\mbf S^\top(\mbf A\mbf x^{k-1}-\mbf b)\|^2= \| \mbf A^\top\mbf S\mbf S^\top(\mbf A\mbf x^{k-1}-\mbf A\mbf x_\star^0+\mbf A\mbf A^\dag\mbf b-\mbf b)\|^2\nn \\
&\leq (\| \mbf A^\top\mbf S\mbf S^\top\mbf A(\mbf x^{k-1}-\mbf x_\star^0)\|+\| \mbf A^\top\mbf S\mbf S^\top(\mbf A\mbf A^\dag\mbf b-\mbf b)\|)^2\nn \\
&\leq (1+\ve)\| \mbf A^\top\mbf S\mbf S^\top\mbf A(\mbf x^{k-1}-\mbf x_\star^0)\|^2+(1+1/\ve)\| \mbf A^\top\mbf S\mbf S^\top(\mbf A\mbf A^\dag\mbf b-\mbf b)\|^2.\label{last}
\end{align}
By (\ref{no2}), (\ref{nsl2}), (\ref{last}), and $\mbf A^\top\mbf A\mbf A^\dag\mbf b=\mbf A^\top\mbf b$, we have
\begin{align*}
&\ \quad \mbbe_{k-1}\bem\|\mbf x^k-\mbf x_\star^0\|^2\eem \\
& =\|\mbf x^{k-1}-\mbf x_\star^0\|^2-2\alpha_{\rm r}(\mbf x^{k-1}-\mbf x_\star^0)^\top	 \mbf A^\top \mbf A(\mbf x^{k-1}-\mbf x_\star^0) +\alpha_{\rm r}^2\mbbe_{k-1}\bem\| \mbf A^\top\mbf S\mbf S^\top(\mbf A\mbf x^{k-1}-\mbf b)\|^2\eem\\
& \leq\|\mbf x^{k-1}-\mbf x_\star^0\|^2-(2\alpha_{\rm r}-\alpha_{\rm r}^2(1+\ve)\lambda_{\max}^{\rm r})(\mbf x^{k-1}-\mbf x_\star^0)^\top	 \mbf A^\top \mbf A(\mbf x^{k-1}-\mbf x_\star^0) +\alpha_{\rm r}^2(1+1/\ve)\gamma\\
&\leq (1-\alpha_{\rm r}(2-\alpha_{\rm r}(1+\ve)\lambda_{\max}^{\rm r})\sigma_{\min}^2(\mbf A))\|\mbf x^{k-1}-\mbf x_\star^0\|^2  +\alpha_{\rm r}^2(1+1/\ve)\gamma\\
&= \eta_\ve\|\mbf x^{k-1}-\mbf x_\star^0\|^2 +\alpha_{\rm r}^2(1+1/\ve)\gamma. 
\end{align*}
Then the expected squared norm of the error can be bounded by 
\begin{align*}
\mbbe\bem\|\mbf x^k-\mbf x^0_\star\|^2\eem &\leq\eta_\ve\mbbe\bem\|\mbf x^{k-1}-\mbf x_\star^0\|^2\eem +\alpha_{\rm r}^2(1+1/\ve)\gamma \\
&\leq \eta_\ve^k\|\mbf x^0-\mbf x^0_\star\|^2+\alpha_{\rm r}^2(1+1/\ve)\gamma\sum_{i=0}^{k-1}\eta_\ve^i\\ 
&= \eta_\ve^k\|\mbf x^0-\mbf x^0_\star\|^2+\frac{\alpha_{\rm r}^2(1+1/\ve)\gamma(1-\eta_\ve^k)}{1-\eta_\ve}\\ 
&=\eta_\ve^k\|\mbf x^0-\mbf x^0_\star\|^2+\frac{\alpha_{\rm r}(1+1/\ve)\gamma(1-\eta_\ve^k)}{(2-\alpha_{\rm r}(1+\ve)\lambda_{\max}^{\rm r})\sigma_{\min}^2(\mbf A)}.
\end{align*} This completes the proof.
\end{proof} 

\subsubsection{The randomized Kaczmarz algorithm} The RK algorithm \cite{strohmer2009rando} is one special case of the BRSI algorithm. Choosing $\dsp\mbf S=\frac{\|\mbf A\|_\rmf}{\|\mbf A_{i,:}\|}\mbf I_{:,i}$ with probability $\dsp\frac{\|\mbf A_{i,:}\|^2}{\|\mbf A\|_\rmf^2}$ in (\ref{brs}), we have $$\mbbe\bem\mbf S\mbf S^\top\eem=\|\mbf A\|_\rmf^2\mbbe\bem\dsp\frac{\mbf I_{:,i}(\mbf I_{:,i})^\top}{\|\mbf A_{i,:}\|^2}\eem= \|\mbf A\|_\rmf^2\sum_{i=1}^m \frac{ \mbf I_{:,i}(\mbf I_{:,i})^\top}{\|\mbf A_{i,:}\|^2}\frac{\|\mbf A_{i,:}\|^2}{\|\mbf A\|_\rmf^2}=\sum_{i=1}^m \mbf I_{:,i}(\mbf I_{:,i})^\top=\mbf I,$$ and recover the RK iteration \beq\label{rk}\mbf x^k=\mbf x^{k-1}-\alpha_{\rm r}\|\mbf A\|_\rmf^2 \frac{\mbf A_{i,:}\mbf x^{k-1}-\mbf b_i}{\|\mbf A_{i,:}\|^2}(\mbf A_{i,:})^\top.\eeq For this case, we have $\lambda_{\max}^{\rm r}=\|\mbf A\|_\rmf^2$. Choosing $\alpha_{\rm r}={1}/{\|\mbf A\|_\rmf^2}$ in Theorem \ref{rslt} yields the convergence estimate of \cite{strohmer2009rando}: $$\mbbe\bem \|\mbf x^k-\mbf x_\star^0\|^2\eem\leq \l(1-\frac{\sigma_{\min}^2(\mbf A)}{\|\mbf A\|_\rmf^2}\r)^k\|\mbf x^0-\mbf x_\star^0\|^2.$$

\subsubsection{The block row uniform sampling algorithm} We propose one special case of the BRSI algorithm by using uniform sampling and refer to it as the block row uniform sampling (BRUS) algorithm. We note that the BRUS algorithm is also one special case of the randomized average block Kaczmarz algorithm \cite{necoara2019faste}. Assume $1\leq\ell\leq m$. Let $\mcali$ denote the set consisting of the uniform sampling of $\ell$ different numbers of $[m]$. Setting $\mbf S=\sqrt{m/\ell}\mbf I_{:,\mcali}$, we have $$\mbbe\bem\mbf S\mbf S^\top\eem=\frac{\dsp\frac{m}{\ell}}{\begin{pmatrix}m\\ \ell\end{pmatrix}}\sum_{\mcali\subseteq[m],\ |\mcali|=\ell}\mbf I_{:,\mcali}\mbf I_{:,\mcali}^\top=\frac{\dsp\frac{m}{\ell}}{\begin{pmatrix}m\\ \ell\end{pmatrix}}\begin{pmatrix}m-1\\ \ell-1\end{pmatrix}\mbf I=\mbf I,$$ and obtain the iteration $$\mbf x^k=\mbf x^{k-1}-\alpha_{\rm r}\frac{m}{\ell}(\mbf A_{\mcali,:})^\top(\mbf A_{\mcali,:}\mbf x^{k-1}-\mbf b_\mcali).$$ For this case, we have $$\lambda_{\max}^{\rm r}=\frac{m}{\ell}\max_{\mcali\subseteq[m],|\mcali|=\ell}\|\mbf A_{\mcali,:}\|^2.$$ By Theorem \ref{rslt}, the BRUS algorithm can have a faster convergence rate than that of the RK algorithm if there exists $\ell\in[m]$ satisfying $$\frac{m}{\ell}\max_{\mcali\subseteq[m],|\mcali|=\ell}\|\mbf A_{\mcali,:}\|^2\leq\|\mbf A\|_\rmf^2.$$

We present the details of the BRUS algorithm with block size $\ell$ and initial guess $\mbf x^0=\mbf 0$ in Algorithm 1. We also note that the constant $m/\ell$ is incorporated in the stepsize parameter $\alpha_{\rm r}$.

\begin{center}
\begin{tabular*}{160mm}{l}
\toprule {\bf Algorithm 1:} BRUS($\ell$)\\ 
\hline \noalign{\smallskip}
\qquad Initialize $\mbf x^0=\mbf 0$ and a fixed $1\leq\ell\leq m$\\
\qquad {\bf for} $k=1,2,\ldots $ {\bf do}\\
\qquad \qquad  Select randomly a set $\mcali$ consisting of the uniform sampling of $\ell$ numbers of $[m]$\\
\qquad \qquad  Update $\mbf x^k=\mbf x^{k-1}-\alpha_{\rm r}(\mbf A_{\mcali,:})^\top(\mbf A_{\mcali,:}\mbf x^{k-1}-\mbf b_\mcali)$ \\
\qquad {\bf end for} \\ \bottomrule
\end{tabular*}
\end{center}

\subsection{Block column sampling} In this subsection, we consider the case $\mbf S=\mbf I$  and refer to the resulting algorithm as the  block column sampling iterative (BCSI) algorithm. Given an arbitrary initial guess $\mbf x^0\in\mbbr^n$, the $k$th iterate of the BCSI algorithm is  \beq\label{bcsi}\mbf x^k=\mbf x^{k-1}-\alpha_{\rm c}\mbf T\mbf T^\top\mbf A^\top(\mbf A\mbf x^{k-1}-\mbf b),\eeq where the stepsize parameter $\alpha_{\rm c}>0$, and the random parameter matrix $\mbf T$ is sampled independently in each iteration from a distribution $\mcald_{\rm c}$ and satisfies $\mbbe\bem \mbf T\mbf T^\top \eem=\mbf I.$ 

In the following, we shall present the convergence of $\mbbe\bem\|\mbf A(\mbf x^k-\mbf A^\dag\mbf b)\|^2\eem$ for arbitrary linear systems. Throughout, we define $$\lambda_{\max}^{\rm c}=\max_{\mbf T\sim\mcald_{\rm c}}\lambda_{\max}(\mbf A\mbf T\mbf T^\top\mbf A^\top).$$ 
\begin{theorem}\label{cslt} Assume that $0<\alpha_{\rm c}<2/\lambda_{\max}^{\rm c}$. 
For arbitrary $\mbf x^0\in\mbbr^n$, the $k$th iterate $\mbf x^k$ of the {\rm BCSI} algorithm satisfies
$$\mbbe\bem \|\mbf A(\mbf x^k-\mbf A^\dag\mbf b)\|^2\eem\leq \eta_{\rm c}^k\|\mbf A(\mbf x^0-\mbf A^\dag\mbf b)\|^2,$$ where $$\eta_{\rm c}=1-\alpha_{\rm c}(2-\alpha_{\rm c}\lambda_{\max}^{\rm c})\sigma_{\min}^2(\mbf A).$$
\end{theorem}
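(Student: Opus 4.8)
The plan is to mirror the proof of Theorem \ref{rslt} almost verbatim, but to track the ``residual'' $\mbf A(\mbf x^k-\mbf A^\dag\mbf b)$ rather than the error $\mbf x^k-\mbf A^\dag\mbf b$ itself. Writing $\mbf e^{k}:=\mbf x^{k}-\mbf A^\dag\mbf b$, the first step is to simplify the update direction. Since $\mbf A^\top\mbf A\mbf A^\dag\mbf b=\mbf A^\top\mbf b$ (already used in the previous proofs), we have $\mbf A^\top(\mbf A\mbf x^{k-1}-\mbf b)=\mbf A^\top\mbf A\mbf e^{k-1}$, so the iteration (\ref{bcsi}) collapses to the clean recursion $\mbf e^{k}=\mbf e^{k-1}-\alpha_{\rm c}\mbf T\mbf T^\top\mbf A^\top\mbf A\,\mbf e^{k-1}$. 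Left-multiplying by $\mbf A$ and setting $\mbf r^{k-1}:=\mbf A\mbf e^{k-1}$ gives $\mbf r^{k}=\mbf r^{k-1}-\alpha_{\rm c}\mbf A\mbf T\mbf T^\top\mbf A^\top\mbf r^{k-1}$, which is exactly the residual analogue of (\ref{rec}) with the symmetric positive semidefinite matrix $\mbf A\mbf T\mbf T^\top\mbf A^\top$ playing the role that $\mbf A^\top\mbf S\mbf S^\top\mbf A$ played before.

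Next I would expand $\|\mbf r^{k}\|^2$ into the three usual terms and bound the quadratic one. Applying the inequality $\lambda_{\max}(\mbf H)\mbf H\succeq\mbf H^2$ with $\mbf H=\mbf A\mbf T\mbf T^\top\mbf A^\top$ — the same device used in (\ref{no2}) — yields $\|\mbf A\mbf T\mbf T^\top\mbf A^\top\mbf r^{k-1}\|^2\leq\lambda_{\max}^{\rm c}(\mbf r^{k-1})^\top\mbf A\mbf T\mbf T^\top\mbf A^\top\mbf r^{k-1}$. Taking the conditional expectation $\mbbe_{k-1}$ and invoking $\mbbe\bem\mbf T\mbf T^\top\eem=\mbf I$ turns both the cross term and the bounded quadratic term into multiples of $(\mbf r^{k-1})^\top\mbf A\mbf A^\top\mbf r^{k-1}=\|\mbf A^\top\mbf r^{k-1}\|^2$, leaving $\mbbe_{k-1}\bem\|\mbf r^{k}\|^2\eem\leq\|\mbf r^{k-1}\|^2-\alpha_{\rm c}(2-\alpha_{\rm c}\lambda_{\max}^{\rm c})\|\mbf A^\top\mbf r^{k-1}\|^2$.

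The final ingredient is the lower bound $\|\mbf A^\top\mbf r^{k-1}\|^2\geq\sigma_{\min}^2(\mbf A)\|\mbf r^{k-1}\|^2$. Here the argument is actually cleaner than in Theorem \ref{rslt}: because $\mbf r^{k-1}=\mbf A\mbf e^{k-1}\in\ran(\mbf A)$ automatically, no inductive range argument is needed, and the singular-value bound for vectors in $\ran(\mbf A)$ applies directly. Combined with the stepsize assumption $0<\alpha_{\rm c}<2/\lambda_{\max}^{\rm c}$, which makes $\alpha_{\rm c}(2-\alpha_{\rm c}\lambda_{\max}^{\rm c})>0$, this gives $\mbbe_{k-1}\bem\|\mbf r^{k}\|^2\eem\leq\eta_{\rm c}\|\mbf r^{k-1}\|^2$; the law of total expectation and unrolling the recurrence then finish the proof.

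The only genuine subtlety — and the point I would flag as the conceptual obstacle — is recognizing \emph{why} one tracks $\mbf A\mbf e^{k}$ rather than $\mbf e^{k}$. Since no full-column-rank assumption is made, components of $\mbf e^{k}$ lying in $\nul(\mbf A)$ are never touched by the update and cannot contract, so a bound on $\|\mbf e^{k}\|^2$ is unavailable in general; the residual $\mbf A\mbf e^{k}$ is the right observable quantity, and its membership in $\ran(\mbf A)$ is exactly what makes the $\sigma_{\min}^2(\mbf A)$ contraction legitimate. Everything else is a routine transcription of the block row sampling estimates into the column sampling setting.
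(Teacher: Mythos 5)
Your proposal is correct and follows essentially the same route as the paper's own proof: the same residual recursion $\mbf A(\mbf x^k-\mbf A^\dag\mbf b)=(\mbf I-\alpha_{\rm c}\mbf A\mbf T\mbf T^\top\mbf A^\top)\mbf A(\mbf x^{k-1}-\mbf A^\dag\mbf b)$, the same bound via $\lambda_{\max}(\mbf H)\mbf H\succeq\mbf H^2$ with $\mbf H=\mbf A\mbf T\mbf T^\top\mbf A^\top$, the same use of $\mbbe\bem\mbf T\mbf T^\top\eem=\mbf I$, and the same $\sigma_{\min}^2(\mbf A)$ bound on $\ran(\mbf A)$ (which, as you note and as the paper implicitly uses, holds automatically without induction). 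Your closing remark on why one must track $\mbf A(\mbf x^k-\mbf A^\dag\mbf b)$ rather than $\mbf x^k-\mbf A^\dag\mbf b$ in the rank-deficient case is a correct and useful observation, though not part of the paper's argument.
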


\begin{proof} It follows from (\ref{bcsi}) and $\mbf A^\top\mbf A\mbf A^\dag\mbf b=\mbf A^\top\mbf b$ that
	\begin{align*}\mbf A(\mbf x^k-\mbf A^\dag\mbf b)
&=\mbf A(\mbf x^{k-1}-\mbf A^\dag\mbf b-\alpha_{\rm c}\mbf T\mbf T^\top\mbf A^\top(\mbf A\mbf x^{k-1}-\mbf b))\\
&=\mbf A(\mbf x^{k-1}-\mbf A^\dag\mbf b)-\alpha_{\rm c}\mbf A\mbf T\mbf T^\top\mbf A^\top\mbf A(\mbf x^{k-1}-\mbf A^\dag\mbf b).
\end{align*} Then we have
\begin{align}
\|\mbf A(\mbf x^k-\mbf A^\dag\mbf b)\|^2
&=\|\mbf A(\mbf x^{k-1}-\mbf A^\dag\mbf b)\|^2-2\alpha_{\rm c}(\mbf x^{k-1}-\mbf A^\dag\mbf b)^\top\mbf A^\top\mbf A\mbf T\mbf T^\top\mbf A^\top\mbf A(\mbf x^{k-1}-\mbf A^\dag\mbf b)\nn \\
&\ \quad + \alpha_{\rm c}^2(\mbf x^{k-1}-\mbf A^\dag\mbf b)^\top\mbf A^\top(\mbf A\mbf T\mbf T^\top\mbf A^\top)^2\mbf A(\mbf x^{k-1}-\mbf A^\dag\mbf b).\label{add1}
\end{align}
Note that
\begin{align}
&\ \quad (\mbf x^{k-1}-\mbf A^\dag\mbf b)^\top\mbf A^\top(\mbf A\mbf T\mbf T^\top\mbf A^\top)^2\mbf A(\mbf x^{k-1}-\mbf A^\dag\mbf b)\nn \\
&\leq \lambda_{\max}(\mbf A\mbf T\mbf T^\top\mbf A^\top) (\mbf x^{k-1}-\mbf A^\dag\mbf b)^\top\mbf A^\top\mbf A\mbf T\mbf T^\top\mbf A^\top\mbf A(\mbf x^{k-1}-\mbf A^\dag\mbf b)\nn\\
&\leq \lambda_{\max}^{\rm c} (\mbf x^{k-1}-\mbf A^\dag\mbf b)^\top\mbf A^\top\mbf A\mbf T\mbf T^\top\mbf A^\top\mbf A(\mbf x^{k-1}-\mbf A^\dag\mbf b).\label{add2}
\end{align}
By (\ref{add1}), (\ref{add2}), and $\mbf A(\mbf x^{k-1}-\mbf A^\dag\mbf b)\in\ran(\mbf A)$, we have
\begin{align*}
\mbbe_{k-1}\bem\|\mbf A(\mbf x^k-\mbf A^\dag\mbf b)\|^2\eem
&\leq\|\mbf A(\mbf x^{k-1}-\mbf A^\dag\mbf b)\|^2-2\alpha_{\rm c}(\mbf x^{k-1}-\mbf A^\dag\mbf b)^\top\mbf A^\top\mbf A\mbf A^\top\mbf A(\mbf x^{k-1}-\mbf A^\dag\mbf b)\\
&\ \quad + \alpha_{\rm c}^2\lambda_{\max}^{\rm c}(\mbf x^{k-1}-\mbf A^\dag\mbf b)^\top\mbf A^\top\mbf A\mbf A^\top\mbf A(\mbf x^{k-1}-\mbf A^\dag\mbf b)\\
&\leq(1-\alpha_{\rm c}(2-\alpha_{\rm c}\lambda_{\max}^{\rm c})\sigma_{\min}^2(\mbf A))\|\mbf A(\mbf x^{k-1}-\mbf A^\dag\mbf b)\|^2.
\end{align*} In the last inequality, we use the facts that $-\alpha_{\rm c}(2-\alpha_{\rm c}\lambda_{\max}^{\rm c})<0$, and for all $\mbf u\in\ran(\mbf A)$, it holds $\mbf u^\top\mbf A\mbf A^\top\mbf u\geq\sigma_{\min}^2(\mbf A)\|\mbf u\|^2$.
Next, by the law of total expectation, we have $$\mbbe\bem\|\mbf A(\mbf x^k-\mbf A^\dag\mbf b)\|^2\eem\leq\eta_{\rm c}\mbbe\bem\|\mbf A(\mbf x^{k-1}-\mbf A^\dag\mbf b)\|^2\eem$$
Unrolling the recurrence yields the result.
\end{proof}

\begin{remark} If $\mbf A$ has full column rank, then Theorem \ref{cslt} implies that $\mbf x^k$ of the {\rm BCSI} algorithm converges linearly to $\mbf A^\dag\mbf b$ in the mean square sense.
\end{remark}

\subsubsection{The randomized coordinate descent algorithm} The RCD algorithm \cite{leventhal2010rando} is one special case of the BCSI algorithm. Choosing $\dsp\mbf T=\frac{\|\mbf A\|_\rmf}{\|\mbf A_{:,j}\|}\mbf I_{:,j}$ with probability $\dsp\frac{\|\mbf A_{:,j}\|^2}{\|\mbf A\|_\rmf^2}$ in (\ref{bcsi}), we have $$\mbbe\bem\mbf T\mbf T^\top\eem=\|\mbf A\|_\rmf^2\mbbe\bem\dsp\frac{\mbf I_{:,j}(\mbf I_{:,j})^\top}{\|\mbf A_{:,j}\|^2}\eem= \|\mbf A\|_\rmf^2\sum_{j=1}^n \frac{ \mbf I_{:,j}(\mbf I_{:,j})^\top}{\|\mbf A_{:,j}\|^2}\frac{\|\mbf A_{:,j}\|^2}{\|\mbf A\|_\rmf^2}=\sum_{j=1}^n\mbf I_{:,j}(\mbf I_{:,j})^\top=\mbf I,$$ and recover the RCD iteration $$\mbf x^k=\mbf x^{k-1}-\alpha_{\rm c}\|\mbf A\|_\rmf^2 \frac{(\mbf A_{:,j})^\top(\mbf A\mbf x^{k-1}-\mbf b)}{\|\mbf A_{:,j}\|^2}\mbf I_{:,j}.$$ For this case, we have $\lambda_{\max}^{\rm c}=\|\mbf A\|_\rmf^2$. Choosing $\alpha_{\rm c}=1/\|\mbf A\|_\rmf^2$ in Theorem \ref{cslt} yields the convergence estimate of \cite{leventhal2010rando,ma2015conve}: $$\mbbe\bem \|\mbf A(\mbf x^k-\mbf A^\dag\mbf b)\|^2\eem\leq \l(1-\frac{\sigma_{\min}^2(\mbf A)}{\|\mbf A\|_\rmf^2}\r)^k\|\mbf A(\mbf x^0-\mbf A^\dag\mbf b)\|^2.$$

\subsubsection{The block column uniform sampling algorithm} We propose one new special case of the BCSI algorithm by using uniform sampling and refer to it as the block column uniform sampling (BCUS) algorithm. Assume $1\leq\ell\leq n$. Let $\mcalj$ denote the set consisting of the uniform sampling of $\ell$ different numbers of $[n]$. Setting $\mbf T=\sqrt{n/\ell}\mbf I_{:,\mcalj}$, we have $$\mbbe\bem\mbf T\mbf T^\top\eem=\frac{\dsp\frac{n}{\ell}}{\begin{pmatrix}n\\ \ell\end{pmatrix}}\sum_{\mcalj\subseteq[n],\ |\mcalj|=\ell}\mbf I_{:,\mcalj}\mbf I_{:,\mcalj}^\top=\frac{\dsp\frac{n}{\ell}}{\begin{pmatrix}n\\ \ell\end{pmatrix}}\begin{pmatrix}n-1\\ \ell-1\end{pmatrix}\mbf I=\mbf I,$$ and obtain the iteration $$\mbf x^k=\mbf x^{k-1}-\alpha_{\rm c}\frac{n}{\ell}\mbf I_{:,\mcalj}(\mbf A_{:,\mcalj})^\top(\mbf A\mbf x^{k-1}-\mbf b).$$ For this case, we have $$\lambda_{\max}^{\rm c}=\frac{n}{\ell}\max_{\mcalj\subseteq[n], |\mcalj|=\ell}\|\mbf A_{:,\mcalj}\|^2.$$ By Theorem \ref{cslt}, the BCUS algorithm can have a faster convergence rate than that of the RCD algorithm if there exists $\ell\in[n]$ satisfying $$\frac{n}{\ell}\max_{\mcalj\subseteq[n], |\mcalj|=\ell}\|\mbf A_{:,\mcalj}\|^2\leq\|\mbf A\|_\rmf^2.$$

To avoid entire matrix-vector multiplications, we introduce an auxiliary vector $\mbf r^k=\mbf b-\mbf A\mbf x^k$ in each iteration of the BCUS algorithm. We present the details of the BCUS algorithm with block size $\ell$ and initial guess $\mbf x^0=\mbf 0$ in Algorithm 2. We also note that the constant $n/\ell$ is incorporated in the stepsize parameter $\alpha_{\rm c}$. 
\begin{center}
\begin{tabular*}{160mm}{l}
\toprule {\bf Algorithm 2:} BCUS($\ell$)\\ 
\hline \noalign{\smallskip}
\qquad Initialize $\mbf x^0=\mbf 0$, $\mbf r^0=\mbf b$, and a fixed $1\leq\ell\leq n$\\
\qquad {\bf for} $k=1,2,\ldots $ {\bf do}\\
\qquad \qquad  Select randomly a set $\mcalj$ consisting of the uniform sampling of $\ell$ numbers of $[n]$\\
\qquad \qquad  Compute $\mbf w^k=\alpha_{\rm c}(\mbf A_{:,\mcalj})^\top\mbf r^{k-1}$\\
\qquad \qquad  Update $\mbf x_\mcalj^k=\mbf x_\mcalj^{k-1}+\mbf w^k$ and $\mbf r^k=\mbf r^{k-1}-\mbf A_{:,\mcalj}\mbf w^k.$\\
\qquad {\bf end for} \\ \bottomrule
\end{tabular*}
\end{center}

\section{The extended block row sampling iterative algorithm}\label{eb}
 
Solving $\bf A^\top z=0$ by the RK algorithm with initial guess $\mbf z^0\in\mbf b+\ran(\mbf A)$ produces a sequence $\{\mbf z^k\}$, which converges to $\mbf b-\mbf A\mbf A^\dag\mbf b$ (see, e.g., \cite{du2019tight}). Zouzias and Freris \cite{zouzias2013rando} proved that the $k$th iterate $\mbf x^k$ (which is produced by one RK update for ${\bf Ax=b-z}^k$ from $\mbf x^{k-1}$) of the REK algorithm converges to a solution of $\bf Ax=AA^\dag b$. We note that any solution of $\bf Ax=AA^\dag b$ is a solution of $\bf Ax=b$ if it is consistent or a least squares solution of $\bf Ax=b$ if it is inconsistent. In this section, based on the idea of the REK algorithm, we propose an extended block row sampling iterative (EBRSI) algorithm. In Appendix \ref{appendix}, we also propose an extended block column and row sampling iterative algorithm based on the idea of the randomized extended Gauss--Seidel algorithm \cite{ma2015conve}. 

Given $\mbf z^0\in\mbf b+\ran(\mbf A)$ and an arbitrary initial guess $\mbf x^0\in\mbbr^n$, the iterates of the EBRSI algorithm at step $k$ are defined as  
\begin{align}
\mbf z^k & =\mbf z^{k-1}-\alpha_{\rm c}\mbf A\mbf T\mbf T^\top\mbf A^\top\mbf z^{k-1},\label{eslz}\\
\mbf x^k & =\mbf x^{k-1}-\alpha_{\rm r}\mbf A^\top\mbf S\mbf S^\top(\mbf A\mbf x^{k-1}-\mbf b+\mbf z^k),\label{eslx}
	\end{align}  where the random parameter matrices $\mbf S$ and $\mbf T$ are independent, and satisfy $$\mbbe\bem \mbf S\mbf S^\top \eem=\mbf I,\qquad \mbbe\bem \mbf T\mbf T^\top \eem=\mbf I.$$ We note that the iteration (\ref{eslz}) is the BRSI algorithm for $\bf A^\top z=0$ with initial guess $\mbf z^0\in\mbf b+\ran(\mbf A)$, and the iterate $\mbf x^k$ in (\ref{eslx}) is one BRSI update for ${\bf Ax=b-z}^k$ from $\mbf x^{k-1}$. By Theorem \ref{rslt}, we have \beq\label{sun1}\mbbe\bem \|\mbf z^k-(\mbf I-\mbf A\mbf A^\dag)\mbf b\|^2\eem\leq \eta_{\rm c}^k\|\mbf z^0-(\mbf I-\mbf A\mbf A^\dag)\mbf b\|^2.\eeq

In the following, we shall present two convergence results of the EBRSI algorithm: Theorem \ref{seli} is on the convergence of $\|\mbbe\bem \mbf x^k\eem-\mbf x_\star^0\|$, and Theorem \ref{sel} is on the convergence of $\mbbe\bem\|\mbf x^k-\mbf x_\star^0\|^2\eem$. We emphasize that both the convergence results hold for arbitrary linear systems. Let $\mbbe_{k-1}\bem\cdot\eem$ denote the conditional expectation conditioned on $\mbf z^{k-1}$ and $\mbf x^{k-1}$. Let $\mbbe_{k-1}^{\rm r}\bem\cdot\eem$ denote the conditional expectation conditioned on $\mbf z^k$ and $\mbf x^{k-1}$. Then, by the law of total expectation, we have $$\mbbe_{k-1}\bem\cdot\eem=\mbbe_{k-1}\bem\mbbe_{k-1}^{\rm r}\bem\cdot\eem\eem.$$

\begin{theorem}\label{seli}
 For arbitrary $\mbf z^0\in\mbbr^m$ and $\mbf x^0\in\mbbr^n$, the $k$th iterate $\mbf x^k$ of the {\rm EBRSI} algorithm satisfies $$\mbbe\bem \mbf x^k-\mbf x_\star^0\eem=(\mbf I-\alpha_{\rm r}\mbf A^\top\mbf A)^k(\mbf x^0-\mbf x_\star^0)-\alpha_{\rm r}\sum_{i=0}^{k-1}(\mbf I-\alpha_{\rm r}\mbf A^\top\mbf A)^i(\mbf I-\alpha_{\rm c}\mbf A^\top\mbf A)^{k-i}\mbf A^\top\mbf z^0.$$ Moreover, \beq\label{reke}
 	\|\mbbe\bem\mbf x^k\eem-\mbf x_\star^0\|\leq\delta^k(\|\mbf x^0-\mbf x_\star^0\|+k\alpha_{\rm r}\|\mbf A^\top\mbf z^0\|),  
 \eeq
 where $$\delta=\max_{1\leq i\leq r}\{|1-\alpha_{\rm r}\sigma_i^2(\mbf A)|,|1-\alpha_{\rm c}\sigma_i^2(\mbf A)|\}.$$
\end{theorem}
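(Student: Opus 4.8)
The plan is to derive a linear recurrence for $\mbf e^k:=\mbbe\bem\mbf x^k-\mbf x_\star^0\eem$ whose inhomogeneous term is governed by the expected behaviour of the $\mbf z$-iterates, to solve that recurrence in closed form, and then to bound each resulting summand by Lemma \ref{leqd}. First I would treat the $\mbf z$-recursion (\ref{eslz}) in isolation. Averaging over $\mbf T$ and using $\mbbe\bem\mbf T\mbf T^\top\eem=\mbf I$ gives $\mbbe_{k-1}\bem\mbf z^k\eem=(\mbf I-\alpha_{\rm c}\mbf A\mbf A^\top)\mbf z^{k-1}$, so unrolling yields $\mbbe\bem\mbf z^k\eem=(\mbf I-\alpha_{\rm c}\mbf A\mbf A^\top)^k\mbf z^0$. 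The algebraic fact I would exploit next is the commutation identity $\mbf A^\top(\mbf I-\alpha_{\rm c}\mbf A\mbf A^\top)=(\mbf I-\alpha_{\rm c}\mbf A^\top\mbf A)\mbf A^\top$, which iterated gives $\mbf A^\top\mbbe\bem\mbf z^k\eem=(\mbf I-\alpha_{\rm c}\mbf A^\top\mbf A)^k\mbf A^\top\mbf z^0$.

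Next I would process the $\mbf x$-recursion (\ref{eslx}). Conditioning on $\mbf z^k$ and $\mbf x^{k-1}$ (the $\mbbe_{k-1}^{\rm r}$ layer), using $\mbbe\bem\mbf S\mbf S^\top\eem=\mbf I$ together with the normal equation $\mbf A^\top\mbf A\mbf x_\star^0=\mbf A^\top\mbf b$, I obtain $$\mbbe_{k-1}^{\rm r}\bem\mbf x^k-\mbf x_\star^0\eem=(\mbf I-\alpha_{\rm r}\mbf A^\top\mbf A)(\mbf x^{k-1}-\mbf x_\star^0)-\alpha_{\rm r}\mbf A^\top\mbf z^k.$$ Taking full expectation (first the $\mbbe_{k-1}^{\rm r}$ layer, then $\mbbe_{k-1}$, then averaging over all earlier randomness) and substituting the formula for $\mbf A^\top\mbbe\bem\mbf z^k\eem$ produces the recurrence $\mbf e^k=(\mbf I-\alpha_{\rm r}\mbf A^\top\mbf A)\mbf e^{k-1}-\alpha_{\rm r}(\mbf I-\alpha_{\rm c}\mbf A^\top\mbf A)^k\mbf A^\top\mbf z^0$ with $\mbf e^0=\mbf x^0-\mbf x_\star^0$. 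A routine induction (or direct unrolling of the accumulated geometric-type sum) then gives the stated closed form, after the reindexing $i=k-j$ in the summation.

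For the norm estimate I would apply the triangle inequality to the closed form and bound the two kinds of terms separately. The leading term $(\mbf I-\alpha_{\rm r}\mbf A^\top\mbf A)^k(\mbf x^0-\mbf x_\star^0)$ is handled by Lemma \ref{leqd} with index $i=0$, noting $\mbf x^0-\mbf x_\star^0=\mbf A^\dag(\mbf A\mbf x^0-\mbf b)\in\ran(\mbf A^\top)$. Each summand $(\mbf I-\alpha_{\rm r}\mbf A^\top\mbf A)^i(\mbf I-\alpha_{\rm c}\mbf A^\top\mbf A)^{k-i}\mbf A^\top\mbf z^0$ is exactly the form covered by Lemma \ref{leqd} (with $\mbf A^\top\mbf z^0\in\ran(\mbf A^\top)$), and---crucially---the \emph{same} $\delta$ bounds every one of them. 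Since there are $k$ such terms, summing produces the factor $k\alpha_{\rm r}\|\mbf A^\top\mbf z^0\|$, which yields (\ref{reke}).

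The step I expect to be the main obstacle is the bookkeeping of the two-stage conditional expectation: because $\mbf z^k$ enters the $\mbf x^k$-update, one must first average over $\mbf S$ with $\mbf z^k$ frozen (the $\mbbe_{k-1}^{\rm r}$ layer) and only afterwards average over $\mbf T$ and the earlier randomness, so that the inhomogeneous term correctly becomes $\alpha_{\rm r}\mbf A^\top\mbbe\bem\mbf z^k\eem$ rather than a mis-coupled quantity. The remaining technical care is the commutation identity that lets $\mbf A^\top$ pass through the $\mbf z$-filter and land on the normal-equations operator with the same eigenstructure, so that a single $\delta$ simultaneously controls both families of terms in the final bound.
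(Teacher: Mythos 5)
Your proposal is correct and follows essentially the same route as the paper's proof: unroll $\mbbe\bem\mbf z^k\eem=(\mbf I-\alpha_{\rm c}\mbf A\mbf A^\top)^k\mbf z^0$, use the two-stage conditional expectation $\mbbe_{k-1}\bem\cdot\eem=\mbbe_{k-1}\bem\mbbe_{k-1}^{\rm r}\bem\cdot\eem\eem$ together with the commutation $\mbf A^\top(\mbf I-\alpha_{\rm c}\mbf A\mbf A^\top)^k=(\mbf I-\alpha_{\rm c}\mbf A^\top\mbf A)^k\mbf A^\top$ to get the inhomogeneous recurrence for $\mbbe\bem\mbf x^k-\mbf x_\star^0\eem$, unroll it, and bound each term via the triangle inequality and Lemma \ref{leqd} using $\mbf x^0-\mbf x_\star^0,\ \mbf A^\top\mbf z^0\in\ran(\mbf A^\top)$. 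All key ingredients, including the careful ordering of the $\mbf S$- and $\mbf T$-averages, match the paper's argument.
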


\begin{proof}  By (\ref{eslz}), we have
	$$\mbbe_{k-1}\bem\mbf z^k\eem=\mbf z^{k-1}-\alpha_{\rm c}\mbf A\mbf A^\top\mbf z^{k-1}=(\mbf I-\alpha_{\rm c}\mbf A\mbf A^\top)\mbf z^{k-1},$$ which, by the law of total expectation, yields $$\mbbe\bem\mbf z^k\eem=(\mbf I-\alpha_{\rm c}\mbf A\mbf A^\top)\mbbe\bem\mbf z^{k-1}\eem=\cdots=(\mbf I-\alpha_{\rm c}\mbf A\mbf A^\top)^k \mbf z^0.$$ Taking expectation conditioned on $\mbf z^{k-1}$ and $\mbf x^{k-1}$ for $$\mbf x^k-\mbf x_\star^0 =\mbf x^{k-1}-\mbf x_\star^0-\alpha_{\rm r}\mbf A^\top\mbf S\mbf S^\top(\mbf A\mbf x^{k-1}-\mbf b+\mbf z^k),$$ we obtain
\begin{align*}\mbbe_{k-1}\bem\mbf x^k-\mbf x_\star^0\eem  
& =\mbbe_{k-1}\bem\mbbe_{k-1}^{\rm r}\bem \mbf x^{k-1}-\mbf x_\star^0 -\alpha_{\rm r}\mbf A^\top\mbf S\mbf S^\top(\mbf A\mbf x^{k-1}-\mbf b+\mbf z^k)\eem\eem\\
& =\mbbe_{k-1}\bem\mbf x^{k-1}-\mbf x_\star^0 -\alpha_{\rm r}\mbf A^\top(\mbf A\mbf x^{k-1}-\mbf b+\mbf z^k)\eem\\
& =\mbf x^{k-1}-\mbf x_\star^0 -\alpha_{\rm r}(\mbf A^\top\mbf A\mbf x^{k-1}-\mbf A^\top\mbf b) -\alpha_{\rm r}\mbf A^\top\mbbe_{k-1}\bem\mbf z^k\eem\\
& =\mbf x^{k-1}-\mbf x_\star^0 -\alpha_{\rm r}(\mbf A^\top\mbf A\mbf x^{k-1}-\mbf A^\top\mbf A\mbf x_\star^0) -\alpha_{\rm r}\mbf A^\top\mbbe_{k-1}\bem\mbf z^k\eem\\
& =(\mbf I-\alpha_{\rm r}\mbf A^\top\mbf A)(\mbf x^{k-1}-\mbf x_\star^0)  -\alpha_{\rm r}\mbf A^\top\mbbe_{k-1}\bem\mbf z^k\eem,
\end{align*}
which, by the law of total expectation, yields 
\begin{align*}
	\mbbe\bem\mbf x^k-\mbf x_\star^0\eem
	&=(\mbf I-\alpha_{\rm r}\mbf A^\top\mbf A)\mbbe\bem\mbf x^{k-1}-\mbf x_\star^0\eem -\alpha_{\rm r}\mbf A^\top\mbbe\bem\mbf z^k\eem\\
	&=(\mbf I-\alpha_{\rm r}\mbf A^\top\mbf A)\mbbe\bem\mbf x^{k-1}-\mbf x_\star^0\eem -\alpha_{\rm r}\mbf A^\top(\mbf I-\alpha_{\rm c}\mbf A\mbf A^\top)^k\mbf z^0\\
	&=(\mbf I-\alpha_{\rm r}\mbf A^\top\mbf A)\mbbe\bem\mbf x^{k-1}-\mbf x_\star^0\eem -\alpha_{\rm r}(\mbf I-\alpha_{\rm c}\mbf A^\top\mbf A)^k\mbf A^\top\mbf z^0\\
	&=(\mbf I-\alpha_{\rm r}\mbf A^\top\mbf A)^2\mbbe\bem\mbf x^{k-2}-\mbf x_\star^0\eem -\alpha_{\rm r}(\mbf I-\alpha_{\rm r}\mbf A^\top\mbf A)(\mbf I-\alpha_{\rm c}\mbf A^\top\mbf A)^{k-1}\mbf A^\top\mbf z^0\\
	&\quad \ -\alpha_{\rm r}(\mbf I-\alpha_{\rm c}\mbf A^\top\mbf A)^k\mbf A^\top\mbf z^0\\
	&=\cdots\\
	&=(\mbf I-\alpha_{\rm r}\mbf A^\top\mbf A)^k(\mbf x^0-\mbf x_\star^0)-\alpha_{\rm r}\sum_{i=0}^{k-1}(\mbf I-\alpha_{\rm r}\mbf A^\top\mbf A)^i(\mbf I-\alpha_{\rm c}\mbf A^\top\mbf A)^{k-i}\mbf A^\top\mbf z^0.
\end{align*} Taking 2-norm, by triangle inequality, $\mbf x^0-\mbf x_\star^0\in\ran(\mbf A^\top)$, $\mbf A^\top\mbf z^0\in\ran(\mbf A^\top)$, and Lemma \ref{leqd}, we obtain the estimate (\ref{reke}).
\end{proof}

\begin{remark}
	In Theorem \ref{seli}, no assumptions about the dimensions or rank of $\mbf A$ are assumed, and the system $\bf Ax=b$ can be consistent or inconsistent. If $0<\alpha_{\rm r}<2/\sigma_{\max}^2(\mbf A)$ and $0<\alpha_{\rm c}<2/\sigma_{\max}^2(\mbf A)$, then $0<\delta<1$. This means $\mbf x^k$ is an asymptotically unbiased estimator for $\mbf x_\star^0$.
\end{remark}	

\begin{theorem}\label{sel}
Assume that $0<\alpha_{\rm c}<2/\lambda_{\max}^{\rm c}$ and  $0<\alpha_{\rm r}<2/\lambda_{\max}^{\rm r}$. For arbitrary $\mbf x^0\in\mbbr^n$, $\mbf z^0\in\mbf b+\ran(\mbf A)$, and $\ve>0$, the $k$th iterate $\mbf x^k$ of the {\rm EBRSI} algorithm satisfies 
\begin{align*}
\mbbe\bem\|\mbf x^k-\mbf x_\star^0\|^2\eem &\leq(1+\ve)^k\eta_{\rm r}^k\|\mbf x^0-\mbf x_\star^0\|^2\\ 
&\quad +(1+1/\ve)\alpha_{\rm r}^2\lambda_{\max}^{\rm r}\|\mbf z^0-(\mbf I-\mbf A\mbf A^\dag)\mbf b\|^2\sum_{i=0}^{k-1}\eta_{\rm c}^{k-i}(1+\ve)^i\eta_{\rm r}^i,
\end{align*} where $$\eta_{\rm r}=1-\alpha_{\rm r}(2-\alpha_{\rm r}\lambda_{\max}^{\rm r})\sigma_{\min}^2(\mbf A),\quad \eta_{\rm c}=1-\alpha_{\rm c}(2-\alpha_{\rm c}\lambda_{\max}^{\rm c})\sigma_{\min}^2(\mbf A).$$
\end{theorem}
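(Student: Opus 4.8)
The plan is to mirror the proof of Theorem \ref{rslt}, treating the $\mbf z^k$-term in the update (\ref{eslx}) as an inhomogeneous perturbation whose size is already controlled by (\ref{sun1}). First I would recast the error recurrence so as to separate a ``BRSI part'' from this perturbation. Since $\mbf A\mbf x_\star^0=\mbf A\mbf A^\dag\mbf b$, we have $\mbf A\mbf x^{k-1}-\mbf b+\mbf z^k=\mbf A(\mbf x^{k-1}-\mbf x_\star^0)+(\mbf z^k-(\mbf I-\mbf A\mbf A^\dag)\mbf b)$, so writing $\mbf e^{k-1}:=\mbf x^{k-1}-\mbf x_\star^0$ and $\mbf f^k:=\mbf z^k-(\mbf I-\mbf A\mbf A^\dag)\mbf b$, the update (\ref{eslx}) becomes
$$\mbf e^k=(\mbf I-\alpha_{\rm r}\mbf A^\top\mbf S\mbf S^\top\mbf A)\mbf e^{k-1}-\alpha_{\rm r}\mbf A^\top\mbf S\mbf S^\top\mbf f^k.$$
Two range facts, both proved by induction from the recurrences, are needed: $\mbf e^k\in\ran(\mbf A^\top)$ (the base case uses $\mbf x^0-\mbf x_\star^0=\mbf A^\dag\mbf A\mbf x^0-\mbf A^\dag\mbf b\in\ran(\mbf A^\top)$) and $\mbf f^k\in\ran(\mbf A)$ (the base case uses $\mbf z^0\in\mbf b+\ran(\mbf A)$, and the step uses (\ref{eslz})).

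Next I would apply Young's inequality to split $\|\mbf e^k\|^2$ into a BRSI term $(1+\ve)\|(\mbf I-\alpha_{\rm r}\mbf A^\top\mbf S\mbf S^\top\mbf A)\mbf e^{k-1}\|^2$ and a perturbation term $(1+1/\ve)\alpha_{\rm r}^2\|\mbf A^\top\mbf S\mbf S^\top\mbf f^k\|^2$, then take the conditional expectation $\mbbe_{k-1}^{\rm r}$ over $\mbf S$ with $\mbf z^k$ and $\mbf x^{k-1}$ held fixed. For the BRSI term I would reuse verbatim the estimate behind Theorem \ref{rslt}: inequality (\ref{no2}), the property $\mbbe\bem\mbf S\mbf S^\top\eem=\mbf I$, and $\mbf e^{k-1}\in\ran(\mbf A^\top)$ yield the bound $\eta_{\rm r}\|\mbf e^{k-1}\|^2$. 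For the perturbation term, since $\mbf f^k\in\ran(\mbf A)$ I would write $\mbf f^k=\mbf A\mbf w$ with $\mbf w=\mbf A^\dag\mbf f^k$, so that $\mbf A^\top\mbf S\mbf S^\top\mbf f^k=\mbf A^\top\mbf S\mbf S^\top\mbf A\mbf w$, and apply (\ref{no2}) again to get $\|\mbf A^\top\mbf S\mbf S^\top\mbf A\mbf w\|^2\leq\lambda_{\max}^{\rm r}\,\mbf w^\top\mbf A^\top\mbf S\mbf S^\top\mbf A\mbf w$; taking $\mbbe_{k-1}^{\rm r}$ and using $\mbbe\bem\mbf S\mbf S^\top\eem=\mbf I$ produces $\mbbe_{k-1}^{\rm r}\bem\|\mbf A^\top\mbf S\mbf S^\top\mbf f^k\|^2\eem\leq\lambda_{\max}^{\rm r}\|\mbf A\mbf w\|^2=\lambda_{\max}^{\rm r}\|\mbf f^k\|^2$.

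Combining these bounds and passing to the full expectation (the outer $\mbbe_{k-1}$ leaves $\mbf e^{k-1}$ fixed, then the tower property is applied) gives the scalar recurrence
$$\mbbe\bem\|\mbf e^k\|^2\eem\leq(1+\ve)\eta_{\rm r}\,\mbbe\bem\|\mbf e^{k-1}\|^2\eem+(1+1/\ve)\alpha_{\rm r}^2\lambda_{\max}^{\rm r}\,\mbbe\bem\|\mbf f^k\|^2\eem.$$
At this point I would invoke (\ref{sun1}), namely $\mbbe\bem\|\mbf f^k\|^2\eem\leq\eta_{\rm c}^k\|\mbf z^0-(\mbf I-\mbf A\mbf A^\dag)\mbf b\|^2$, to obtain $a_k\leq(1+\ve)\eta_{\rm r}\,a_{k-1}+c\,\eta_{\rm c}^k$ with $a_k=\mbbe\bem\|\mbf e^k\|^2\eem$ and $c=(1+1/\ve)\alpha_{\rm r}^2\lambda_{\max}^{\rm r}\|\mbf z^0-(\mbf I-\mbf A\mbf A^\dag)\mbf b\|^2$. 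Unrolling this recurrence, with $\rho:=(1+\ve)\eta_{\rm r}$, yields $a_k\leq\rho^k a_0+c\sum_{i=0}^{k-1}\rho^i\eta_{\rm c}^{k-i}$, which is exactly the claimed bound once $\rho^k$ and $\rho^i$ are expanded.

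The two applications of (\ref{no2}) and the final unrolling are routine; the step demanding the most care is the bookkeeping of the two-level conditioning. Because $\mbf S$ and $\mbf T$ are independent and $\mbf f^k$ is measurable with respect to the $\mbf T$-randomness at step $k$, it is essential to condition first on $\mbf z^k$ (freezing $\mbf f^k$ while averaging over $\mbf S$) and only afterwards average over $\mbf T$ through (\ref{sun1}); collapsing the two expectations prematurely would entangle the $\mbf S$- and $\mbf T$-dependence and destroy the clean split above. I also need the perturbation estimate to sharpen to exactly $\lambda_{\max}^{\rm r}\|\mbf f^k\|^2$, which is precisely why the reduction $\mbf f^k\in\ran(\mbf A)$ combined with (\ref{no2}) is the crux of the argument.
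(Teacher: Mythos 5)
Your proof is correct and follows essentially the same route as the paper's: the same error decomposition (the paper simply names your ``BRSI part'' as an auxiliary iterate $\wh{\mbf x}^k$, one BRSI update for $\mbf A\mbf x=\mbf A\mbf x_\star^0$), the same Young's-inequality split with $(1+\ve)$ on the BRSI term and $(1+1/\ve)$ on the perturbation, the same two-level conditioning $\mbbe_{k-1}\bem\cdot\eem=\mbbe_{k-1}\bem\mbbe_{k-1}^{\rm r}\bem\cdot\eem\eem$, and the same invocation of (\ref{sun1}) before unrolling the scalar recurrence. The only local difference is your treatment of the perturbation term, where you use $\mbf f^k\in\ran(\mbf A)$ to write $\mbf f^k=\mbf A\mbf w$ and then apply (\ref{no2}), whereas the paper reaches the identical bound $\lambda_{\max}^{\rm r}\|\mbf f^k\|^2$ directly from the identity $\lambda_{\max}(\mbf S^\top\mbf A\mbf A^\top\mbf S)=\lambda_{\max}(\mbf A^\top\mbf S\mbf S^\top\mbf A)$ together with $\mbbe\bem\mbf S\mbf S^\top\eem=\mbf I$, without needing that range fact at all.
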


\begin{proof}
	We define $$\wh{\mbf x}^k=\mbf x^{k-1}-\alpha_{\rm r}\mbf A^\top\mbf S\mbf S^\top(\mbf A\mbf x^{k-1}-\mbf A\mbf x_\star^0),$$ which is one BRSI update for $\mbf A\mbf x=\mbf A\mbf x_\star^0$ from $\mbf x^{k-1}$.  We have $$\mbf x^k-\wh{\mbf x}^k=\alpha_{\rm r}\mbf A^\top\mbf S\mbf S^\top(\mbf b-\mbf A\mbf x_\star^0-\mbf z^k).$$ It follows from $\lambda_{\max}(\mbf S^\top\mbf A\mbf A^\top\mbf S)=\lambda_{\max}(\mbf A^\top\mbf S\mbf S^\top\mbf A)$ that
\begin{align*}
\|\mbf x^k-\wh{\mbf x}^k\|^2&=\alpha_{\rm r}^2(\mbf b-\mbf A\mbf x_\star^0-\mbf z^k)^\top\mbf S\mbf S^\top\mbf A\mbf A^\top\mbf S\mbf S^\top(\mbf b-\mbf A\mbf x_\star^0-\mbf z^k)\\ 
&\leq\alpha_{\rm r}^2\lambda_{\max}^{\rm r}(\mbf b-\mbf A\mbf x_\star^0-\mbf z^k)^\top\mbf S\mbf S^\top(\mbf b-\mbf A\mbf x_\star^0-\mbf z^k).
\end{align*}
Taking conditional expectation conditioned on $\mbf z^{k-1}$ and $\mbf x^{k-1}$, by $\mbf b-\mbf A\mbf x_\star^0=(\mbf I-\mbf A\mbf A^\dag)\mbf b$, we have
\begin{align*}
	\mbbe_{k-1}\bem\|\mbf x^k-\wh{\mbf x}^k\|^2\eem&=\mbbe_{k-1}\bem\mbbe_{k-1}^{\rm r}\bem\|\mbf x^k-\wh{\mbf x}^k\|^2\eem\eem\\
	&\leq \alpha_{\rm r}^2\lambda_{\max}^{\rm r}\mbbe_{k-1}\bem \|\mbf z^k-(\mbf I-\mbf A\mbf A^\dag)\mbf b\|^2\eem
\end{align*}
Then, by the law of total expectation and the estimate (\ref{sun1}), we have
\begin{align*}
\mbbe\bem\|\mbf x^k-\wh{\mbf x}^k\|^2\eem
\leq \alpha_{\rm r}^2\lambda_{\max}^{\rm r}\mbbe\bem \|\mbf z^k-(\mbf I-\mbf A\mbf A^\dag)\mbf b\|^2\eem\leq \alpha_{\rm r}^2\lambda_{\max}^{\rm r}\eta_{\rm c}^k\|\mbf z^0-(\mbf I-\mbf A\mbf A^\dag)\mbf b\|^2.	
\end{align*}
By $\mbf x^0-\mbf x_\star^0\in\ran(\mbf A^\top)$ and $\mbf A^\top\mbf S\mbf S^\top(\mbf A\mbf x^{k-1}-\mbf b+\mbf z^k)\in\ran(\mbf A^\top)$, we can show that $\mbf x^k-\mbf x_\star^0\in\ran(\mbf A^\top)$ by induction. Then,
\begin{align*}
	\|\wh{\mbf x}^k-\mbf x_\star^0\|^2 
	& = \|\mbf x^{k-1}-\mbf x_\star^0\|^2-2\alpha_{\rm r}(\mbf x^{k-1}-\mbf x_\star^0)^\top\mbf A^\top\mbf S\mbf S^\top\mbf A(\mbf x^{k-1}-\mbf x_\star^0)\\
	&\ \quad +\alpha_{\rm r}^2(\mbf x^{k-1}-\mbf x_\star^0)^\top(\mbf A^\top\mbf S\mbf S^\top\mbf A)^2(\mbf x^{k-1}-\mbf x_\star^0)\\
	&\leq \|\mbf x^{k-1}-\mbf x_\star^0\|^2-2\alpha_{\rm r}(\mbf x^{k-1}-\mbf x_\star^0)^\top\mbf A^\top\mbf S\mbf S^\top\mbf A(\mbf x^{k-1}-\mbf x_\star^0)\\
	&\ \quad +\alpha_{\rm r}^2\lambda_{\max}^{\rm r}(\mbf x^{k-1}-\mbf x_\star^0)^\top\mbf A^\top\mbf S\mbf S^\top\mbf A(\mbf x^{k-1}-\mbf x_\star^0).
\end{align*} Taking conditional expectation conditioned on $\mbf z^{k-1}$ and $\mbf x^{k-1}$, we have 
\begin{align*}\mbbe_{k-1}\bem\|\wh{\mbf x}^k-\mbf x_\star^0\|^2\eem
&\leq \|\mbf x^{k-1}-\mbf x_\star^0\|^2-2\alpha_{\rm r}(\mbf x^{k-1}-\mbf x_\star^0)^\top\mbf A^\top\mbf A(\mbf x^{k-1}-\mbf x_\star^0)\\
&\ \quad +\alpha_{\rm r}^2\lambda_{\max}^{\rm r}(\mbf x^{k-1}-\mbf x_\star^0)^\top\mbf A^\top\mbf A(\mbf x^{k-1}-\mbf x_\star^0)\\ 
&\leq \eta_{\rm r}\|\mbf x^{k-1}-\mbf x_\star^0\|^2.\end{align*} By the law of total expectation, we have $$\mbbe\bem\|\wh{\mbf x}^k-\mbf x_\star^0\|^2\eem\leq \eta_{\rm r}\mbbe\bem\|\mbf x^{k-1}-\mbf x_\star^0\|^2\eem.$$ By triangle inequality and Young's inequality, we have
$$\|\mbf x^k-\mbf x_\star^0\|^2\leq(\|\mbf x^k-\wh{\mbf x}^k\|+\|\wh{\mbf x}^k-\mbf x_\star^0\|)^2\leq(1+1/\ve)\|\mbf x^k-\wh{\mbf x}^k\|^2+(1+\ve)\|\wh{\mbf x}^k-\mbf x_\star^0\|^2.$$  Taking expectation, we have
\begin{align*}
\mbbe\bem\|\mbf x^k-\mbf x_\star^0\|^2\eem
&\leq(1+1/\ve)\mbbe\bem\|\mbf x^k-\wh{\mbf x}^k\|^2\eem+(1+\ve)\mbbe\bem\|\wh{\mbf x}^k-\mbf x_\star^0\|^2\eem\\
&\leq(1+1/\ve)\alpha_{\rm r}^2\lambda_{\max}^{\rm r}\eta_{\rm c}^k\|\mbf z^0-(\mbf I-\mbf A\mbf A^\dag)\mbf b\|^2+(1+\ve)\eta_{\rm r}\mbbe\bem\|\mbf x^{k-1}-\mbf x_\star^0\|^2\eem\\
&\leq (1+1/\ve)\alpha_{\rm r}^2\lambda_{\max}^{\rm r}\|\mbf z^0-(\mbf I-\mbf A\mbf A^\dag)\mbf b\|^2(\eta_{\rm c}^k+\eta_{\rm c}^{k-1}(1+\ve)\eta_{\rm r})\\
& \ \quad +(1+\ve)^2\eta_{\rm r}^2\mbbe\bem\|\mbf x^{k-2}-\mbf x_\star^0\|^2\eem\\
&\leq \cdots\\
&\leq (1+1/\ve)\alpha_{\rm r}^2\lambda_{\max}^{\rm r}\|\mbf z^0-(\mbf I-\mbf A\mbf A^\dag)\mbf b\|^2\sum_{i=0}^{k-1}\eta_{\rm c}^{k-i}(1+\ve)^i\eta_{\rm r}^i\\
&\ \quad +(1+\ve)^k\eta_{\rm r}^k\|\mbf x^0-\mbf x_\star^0\|^2.
\end{align*} This completes the proof.
\end{proof}

\begin{remark} 	In Theorem \ref{sel}, no assumptions about the dimensions or rank of $\mbf A$ are assumed, and the system $\bf Ax=b$ can be consistent or inconsistent.  Let $\eta=\max\{\eta_{\rm r},\eta_{\rm c}\}$. It follows from $0<\alpha_{\rm c}<2/\lambda_{\max}^{\rm c}$ and  $0<\alpha_{\rm r}<2/\lambda_{\max}^{\rm r}$ that $\eta<1$. Assume that $\ve$ satisfies $(1+\ve)\eta<1$. We have $$\mbbe\bem\|\mbf x^k-\mbf x_\star^0\|^2\eem\leq(1+\ve)^k\eta^k(\|\mbf x^0-\mbf x_\star^0\|^2+(1+\ve)\alpha_{\rm r}^2\lambda_{\max}^{\rm r}\|\mbf z^0-(\mbf I-\mbf A\mbf A^\dag)\mbf b\|^2/\ve^2),$$ which shows that the {\rm EBRSI} algorithm converges  linearly  in the mean square sense to $\mbf x_\star^0$ with the rate $(1+\ve)\eta$. 
\end{remark}

\subsection{The randomized extended Kaczmarz algorithm} The REK algorithm \cite{zouzias2013rando} is one special case of the EBRSI algorithm. Choosing $\dsp\mbf S=\frac{\|\mbf A\|_\rmf}{\|\mbf A_{i,:}\|}\mbf I_{:,i}$ with probability $\dsp\frac{\|\mbf A_{i,:}\|^2}{\|\mbf A\|_\rmf^2}$ and $\dsp\mbf T=\frac{\|\mbf A\|_\rmf}{\|\mbf A_{:,j}\|}\mbf I_{:,j}$ with probability $\dsp\frac{\|\mbf A_{:,j}\|^2}{\|\mbf A\|_\rmf^2}$, we have $$\mbbe\bem\mbf S\mbf S^\top\eem=\mbf I,\quad \mbbe\bem\mbf T\mbf T^\top\eem=\mbf I,$$ and obtain 
\begin{align*}
\mbf z^k & =\mbf z^{k-1}-\alpha_{\rm c}\frac{\|\mbf A\|_\rmf^2}{\|\mbf A_{:,j}\|^2}(\mbf A_{:,j})^\top\mbf z^{k-1}\mbf A_{:,j},\\
\mbf x^k & =\mbf x^{k-1}-\alpha_{\rm r}\frac{\|\mbf A\|_\rmf^2}{\|\mbf A_{i,:}\|^2}(\mbf A_{i,:}\mbf x^{k-1}-\mbf b_i+\mbf z^k_i)(\mbf A_{i,:})^\top.
	\end{align*} We have $\lambda_{\max}^{\rm r}=\lambda_{\max}^{\rm c}=\|\mbf A\|_\rmf^2$. Setting $\alpha_{\rm r}=\alpha_{\rm c}=1/\|\mbf A\|_\rmf^2$, we recover the REK algorithm \cite{zouzias2013rando,du2019tight}. 

\subsection{The randomized extended average block Kaczmarz algorithm} The randomized extended average block Kaczmarz (REABK) algorithm \cite{du2020rando} is one special case of the EBRSI algorithm. Let $\{\mcali_1,\mcali_2,\ldots,\mcali_s\}$ be a partition of $[m]$ satisfying $\mcali_i\cap\mcali_j=\emptyset$ for $i\neq j$ and $\cup_{i=1}^s\mcali_i=[m]$. Let $\{\mcalj_1,\mcalj_2,\ldots,\mcalj_t\}$ be a partition of $[n]$ satisfying $\mcalj_i\cap\mcalj_j=\emptyset$ for $i\neq j$ and $\cup_{j=1}^t\mcalj_j=[n]$.  Choosing $\dsp\mbf S=\frac{\|\mbf A\|_\rmf}{\|\mbf A_{\mcali_i,:}\|_\rmf}\mbf I_{:,\mcali_i}$ with probability $\dsp\frac{\|\mbf A_{\mcali_i,:}\|_\rmf^2}{\|\mbf A\|_\rmf^2}$ and $\dsp\mbf T=\frac{\|\mbf A\|_\rmf}{\|\mbf A_{:,\mcalj_j}\|_\rmf}\mbf I_{:,\mcalj_j}$ with probability $\dsp\frac{\|\mbf A_{:,\mcalj_j}\|_\rmf^2}{\|\mbf A\|_\rmf^2}$, we have $$\mbbe\bem\mbf S\mbf S^\top\eem=\mbf I,\quad \mbbe\bem\mbf T\mbf T^\top\eem=\mbf I,$$ and obtain \begin{align*} \mbf z^k & =\mbf z^{k-1}-\alpha_{\rm c}\frac{\|\mbf A\|_\rmf^2}{\|\mbf A_{:,\mcalj_j}\|_\rmf^2}\mbf A_{:,\mcalj_j}(\mbf A_{:,\mcalj_j})^\top\mbf z^{k-1},\\ \mbf x^k & =\mbf x^{k-1}-\alpha_{\rm r}\frac{\|\mbf A\|_\rmf^2}{\|\mbf A_{\mcali_i,:}\|_\rmf^2}(\mbf A_{\mcali_i,:})^\top(\mbf A_{\mcali_i,:}\mbf x^{k-1}-\mbf b_{\mcali_i}+\mbf z^k_{\mcali_i}).\end{align*} Setting $\alpha_{\rm r}=\alpha_{\rm c}=\alpha/\|\mbf A\|_\rmf^2$, we recover the REABK algorithm \cite{du2020rando}.

\subsection{The extended block row uniform sampling algorithm} We propose one new special case of the EBRSI algorithm by using uniform sampling and refer to it as the extended block row uniform sampling (EBRUS) algorithm. Assume $1\leq\ell\leq \min\{m,n\}$. Let $\mcali$ (resp. $\mcalj$) denote the set consisting of the uniform sampling of $\ell$ different numbers of $[m]$  (resp. $[n]$). Setting $\dsp\mbf S=\sqrt{{m}/{\ell}}\mbf I_{:,\mcali}$ and $\mbf T=\sqrt{{n}/{\ell}}\mbf I_{:,\mcalj}$, we have $$\mbbe\bem\mbf S\mbf S^\top\eem=\frac{\dsp\frac{m}{\ell}}{\begin{pmatrix}m\\ \ell\end{pmatrix}}\sum_{\mcali\subseteq[m],\ |\mcali|=\ell}\mbf I_{:,\mcali}\mbf I_{:,\mcali}^\top=\mbf I,\quad \mbbe\bem\mbf T\mbf T^\top\eem=\frac{\dsp\frac{n}{\ell}}{\begin{pmatrix}n\\ \ell\end{pmatrix}}\sum_{\mcalj\subseteq[n],\ |\mcalj|=\ell}\mbf I_{:,\mcalj}\mbf I_{:,\mcalj}^\top=\mbf I,$$ and obtain \begin{align*}\mbf z^k &=\mbf z^{k-1}-\alpha_{\rm c}\frac{n}{\ell}\mbf A_{:,\mcalj}{(\mbf A_{:,\mcalj})^\top\mbf z^{k-1}},\\ \mbf x^k&=\mbf x^{k-1}-\alpha_{\rm r}\frac{m}{\ell} (\mbf A_{\mcali,:})^\top({\mbf A_{\mcali,:}\mbf x^{k-1}-\mbf b_\mcali+\mbf z_\mcali^k}). \end{align*} We have $$\lambda_{\max}^{\rm r}=\frac{m}{\ell}\max_{\mcali\subseteq[m],|\mcali|=\ell}\|\mbf A_{\mcali,:}\|^2, \quad\mbox{and}\quad \lambda_{\max}^{\rm c}=\frac{n}{\ell}\max_{\mcalj\subseteq[n], |\mcalj|=\ell}\|\mbf A_{:,\mcalj}\|^2.$$ By Theorem \ref{sel}, the EBRUS algorithm can have a faster convergence rate than that of the REK algorithm if there exists $1\leq\ell\leq\min\{m,n\}$ satisfying $$\frac{m}{\ell}\max_{\mcali\subseteq[m],|\mcali|=\ell}\|\mbf A_{\mcali,:}\|^2\leq\|\mbf A\|_\rmf^2,\quad\mbox{and}\quad \frac{n}{\ell}\max_{\mcalj\subseteq[n], |\mcalj|=\ell}\|\mbf A_{:,\mcalj}\|^2\leq\|\mbf A\|_\rmf^2.$$
	
We present the details of the EBRUS algorithm with block size $\ell$ and initial guesses $\mbf x^0=\mbf 0$ and $\mbf z^0=\mbf b$ in Algorithm 3. We also note that the constants $m/\ell$ and $n/\ell$ are incorporated in the stepsize parameters $\alpha_{\rm r}$ and $\alpha_{\rm c}$, respectively.

\begin{center}
\begin{tabular*}{160mm}{l}
\toprule {\bf Algorithm 3:} EBRUS($\ell$)\\ 
\hline \noalign{\smallskip}
\qquad Initialize $\mbf z^0=\mbf b$, $\mbf x^0=\mbf 0$ and a fixed $1\leq\ell\leq\min\{m,n\}$\\
\qquad {\bf for} $k=1,2,\ldots $ {\bf do}\\
\qquad \qquad  Select randomly a set $\mcalj$ consisting of the uniform sampling of $\ell$ numbers of $[n]$\\
\qquad \qquad  Update $\mbf z^k=\mbf z^{k-1}-\alpha_{\rm c}  \mbf A_{:,\mcalj}{(\mbf A_{:,\mcalj})^\top\mbf z^{k-1}}$\\
\qquad \qquad  Select randomly a set $\mcali$ consisting of the uniform sampling of $\ell$ numbers of $[m]$\\
\qquad \qquad  Update $\mbf x^k=\mbf x^{k-1}-\alpha_{\rm r} (\mbf A_{\mcali,:})^\top({\mbf A_{\mcali,:}\mbf x^{k-1}-\mbf b_\mcali+\mbf z_\mcali^k}) $\\
\qquad {\bf end for} \\ \bottomrule
\end{tabular*}
\end{center}
	
\section{Numerical results} 
In this section, we report numerical results showing the influence of different stepsize parameters and different block sizes on the convergence of the BRUS, BCUS, and EBRUS algorithms. We also compare the performance of these three algorithms with the following ten randomized algorithms: the randomized Kaczmarz (RK) algorithm \cite{strohmer2009rando}, the greedy randomized Kaczmarz (GRK) algorithm \cite{bai2018greed}, the randomized block Kaczmarz (RBK) algorithm \cite{gower2015rando}, the randomized coordinate descent (RCD) algorithm \cite{leventhal2010rando}, the greedy randomized coordinate descent (GRCD) algorithm \cite{bai2019greed}, the randomized block coordinate descent (RBCD) algorithm \cite{gower2015rando}, the randomized extended Kaczmarz (REK) algorithm \cite{zouzias2013rando}, the two-subspace randomized extended Kaczmarz (TREK) algorithm \cite{wu2021two}, the randomized extended block Kaczmarz (REBK) algorithm (which is slightly different from the randomized double block Kaczmarz algorithm of \cite{needell2015rando}), and the randomized extended average block Kaczmarz (REABK) algorithm \cite{du2020rando}. According to our theoretical results (Theorems \ref{rslt}, \ref{cslt}, and \ref{sel}), and taking into consideration the computational cost of each step, we divide the thirteen algorithms into three groups for comparison: (1) RK, GRK, RBK, and BRUS for solving arbitrary consistent linear systems; (2) RCD, GRCD, RBCD, and BCUS for solving full column rank (i.e., $\rank(\mbf A)=n$) inconsistent linear systems; (3) REK, TREK, REBK, REABK, and EBRUS for solving rank-deficient (i.e., $\rank(\mbf A)<n$) inconsistent linear systems. 

All experiments are performed using MATLAB R2019a on an iMac with 3.4 GHz Intel Core i5 processor and 8 GB 1600 MHz DDR3 memory. Given a matrix $\mbf A\in\mbbr^{m\times n}$, a consistent system is constructed by setting {\tt b=A*randn(n,1)}, and an inconsistent one is constructed by setting {\tt b=A*randn(n,1)+null(A}'{\tt)*randn(m-r,1)}, where {\tt r} is the rank of {\tt A}. Both synthetic data matrix and real-world data matrix are tested. All synthetic data matrices  are generated as follows. Given $m$, $n$, $r = \rank(\mbf A)\leq\min\{m,n\}$, and $\kappa\geq 1$, we construct $\mbf A$ by $\bf A = UDV^\top$, where $\mbf U\in\mbbr^{m\times r}$, $\mbf D\in\mbbr^{r\times r}$ and $\mbf V\in\mbbr^{n\times r}$ are given by {\tt [U,$\sim$]=qr(randn(m,r),0)}, {\tt D=diag(ones(r,1)+($\kappa$-1)*rand(r,1))}, and {\tt [V,$\sim$]=qr(randn(n,r),0)}. So the condition number of $\mbf A$, $\sigma_{\rm max}(\mbf A)/\sigma_{\rm min}(\mbf A)$, is bounded by $\kappa$. We use $\kappa=5$ for all  synthetic data matrices.

We note that {\tt A$\backslash$b} will be the same as {\tt pinv(A)*b} when {\tt A} have full column rank and usually not be the same as {\tt pinv(A)*b} when {\tt A} is rank-deficient. Therefore, we use MATLAB's `$\backslash$' to solve the small least squares problem at each step of the RBCD algorithm, and use MATLAB's {\tt lsqminnorm} (which is typically more efficient than {\tt pinv}) to solve the small least squares problems at each step of the RBK and REBK algorithms. For all algorithms, we use $\mbf x^0 = \mbf 0$ (and $\mbf z^0=\mbf b$ if needed) and stop if the relative error ({\tt relerr}), defined by $${\tt relerr}=\frac{\|\mbf x^k-\mbf A^\dag\mbf b\|^2}{\|\mbf A^\dag\mbf b\|^2},$$ satisfies ${\tt relerr}\leq 10^{-10}$. We check this stopping criterion after each epoch. For the RK, GRK, RBK, and BRUS  algorithms, an epoch consists of $m$, $m$, $\lceil m/\ell\rceil$, and $\lceil m/\ell\rceil$ iterations, respectively. For the RCD, GRCD, RBCD, and BCUS algorithms, an epoch consists of $n$, $n$, $\lceil n/\ell\rceil$, and $\lceil n/\ell\rceil$ iterations, respectively. For the REK, TREK, REBK, REABK, and EBRUS algorithms, an epoch consists of $\max\{m,n\}$, $\lceil\max\{m,n\}/2\rceil$, $\lceil\max\{m,n\}/\ell\rceil$, and $\lceil\max\{m,n\}/\ell\rceil$ iterations, respectively. In all examples, the reported results are average of 10 independent trials. 

\begin{figure}[tbhp]
  \centering
  \subfloat[BRUS]{\includegraphics[height=4.4cm]{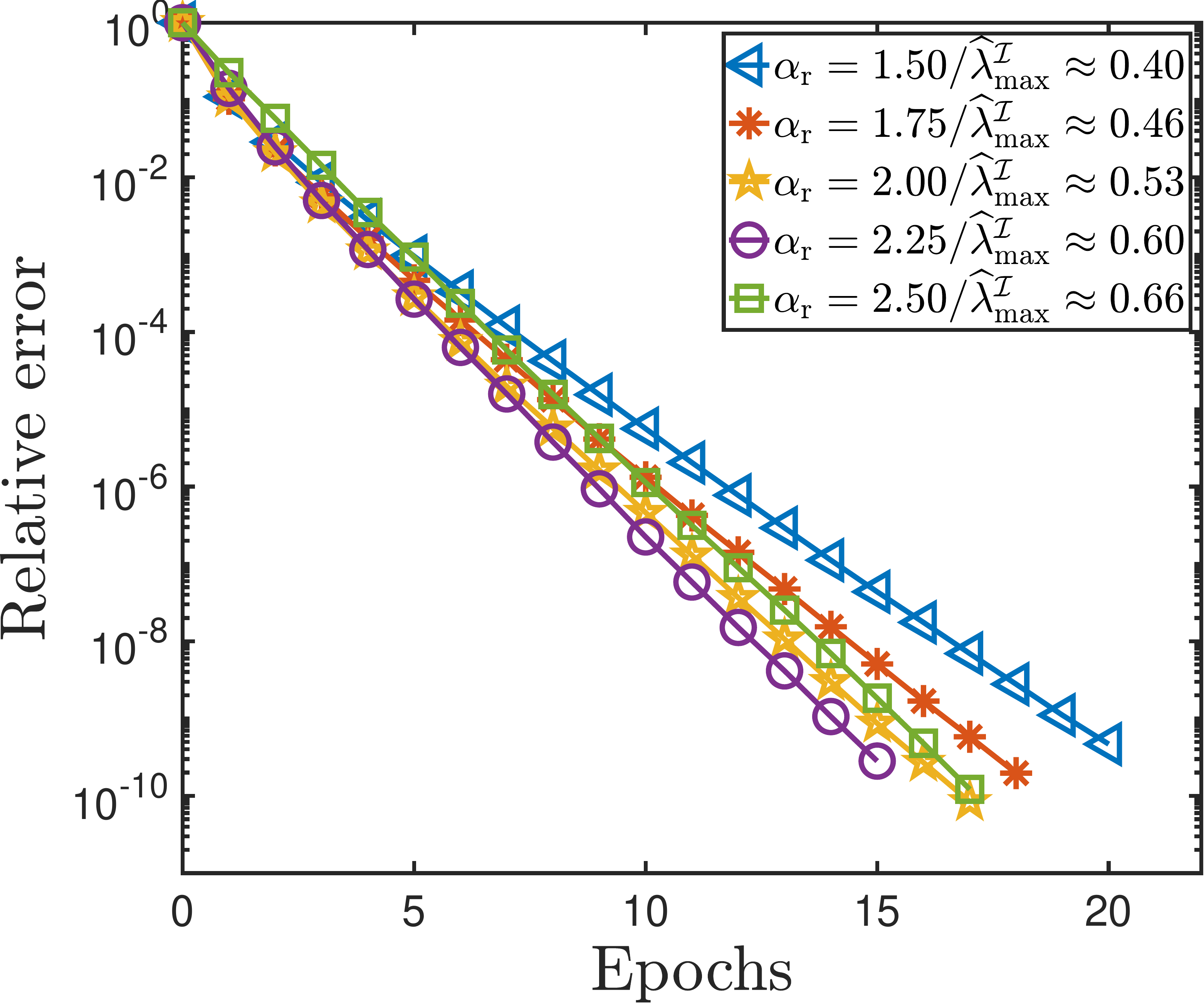}}\quad
  \subfloat[BCUS]{\includegraphics[height=4.4cm]{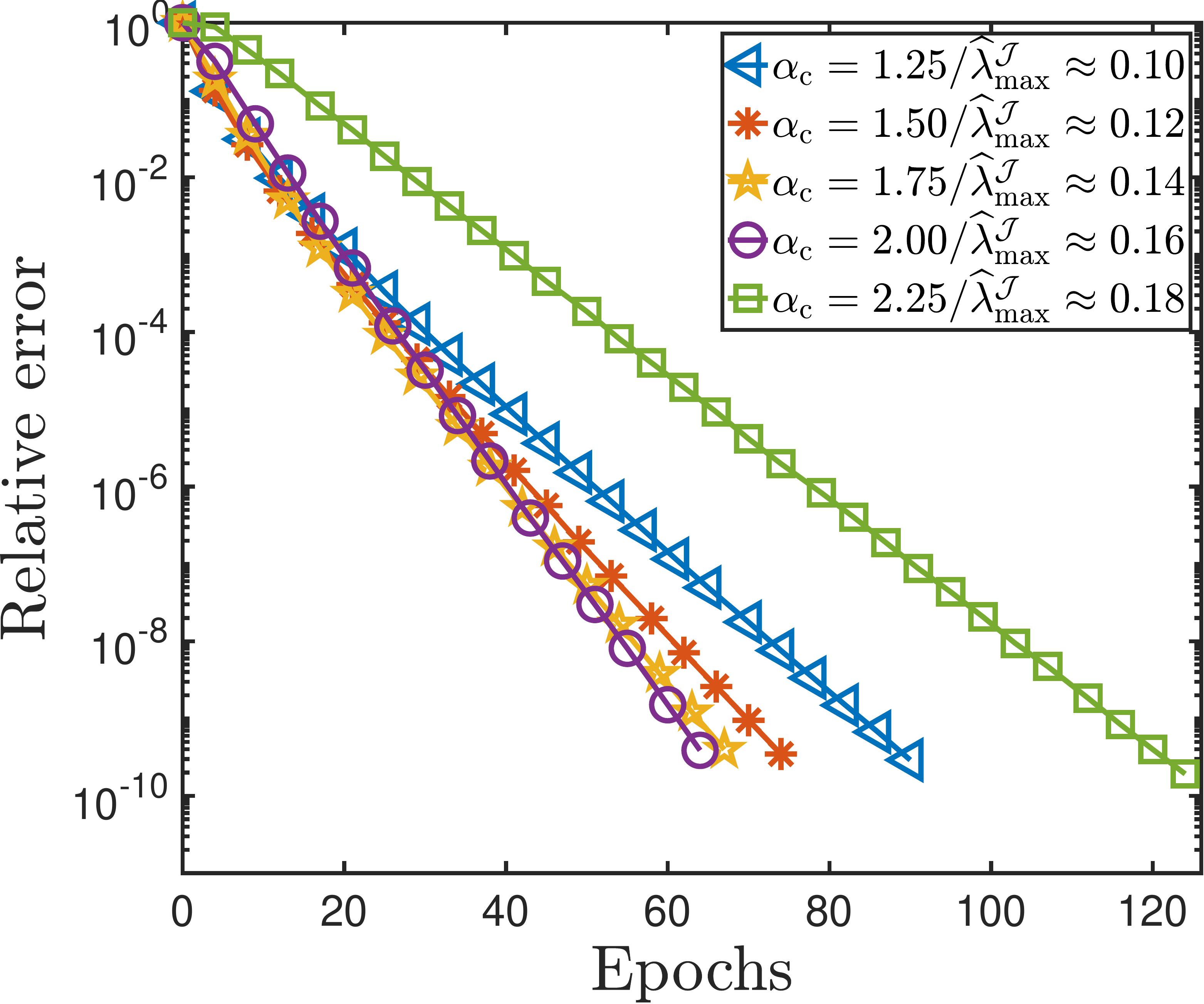}}\quad
  \subfloat[EBRUS]{\includegraphics[height=4.4cm]{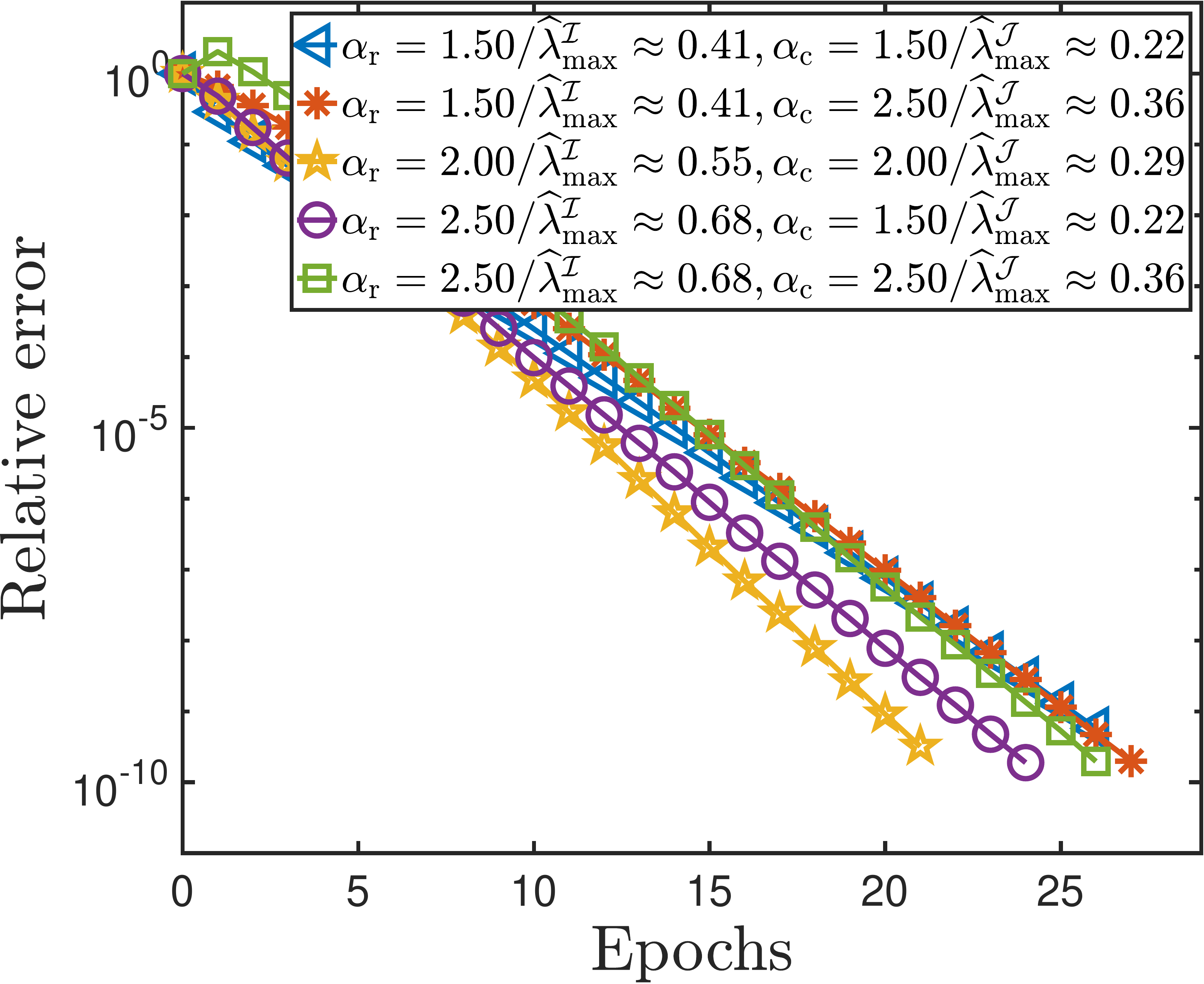}}
  \caption{The relative error history versus the number of epochs of the BRUS, BCUS, and EBRUS algorithms with different stepsize parameters. (a) BRUS for a consistent linear system with synthetic data matrix ($m=1000, n=500, r=250$). (b) BCUS for a full column rank inconsistent linear system with synthetic data matrix ($m=1000, n=500, r=500$). (c) EBRUS for a rank-deficient inconsistent linear system with synthetic data matrix ($m=1000, n=500, r=250$).}
  \label{fig:cor}
\end{figure}

First we use three linear systems to investigate how different stepsize parameters affect the convergence of the BRUS, BCUS, and EBRUS algorithms. We use the block size $\ell=10$ and define $$\wh\lambda_{\max}^{\mcali}:=\max_{1\leq i\leq  \ell}\|\mbf A_{\mcali_i,:}\|^2, \quad \wh\lambda_{\max}^{\mcalj}:=\max_{1\leq i\leq \ell}\|\mbf A_{:,\mcalj_i}\|^2,$$ where $\{\mcali_i\}_{i=1}^\ell$ are independent sets consisting of the uniform sampling of $\ell$ different numbers of $[m]$, and $\{\mcalj_i\}_{i=1}^\ell$ are independent sets consisting of the uniform sampling of $\ell$ different numbers of $[n]$.  In Figure \ref{fig:cor}, we plot the relative error history versus the number of epochs of the BRUS, BCUS, and EBRUS algorithms with different stepsize parameters for solving a consistent linear system with synthetic data matrix ($m=1000, n=500, r=250$), a full column rank inconsistent linear system with synthetic data matrix ($m=1000, n=500, r=500$), and a rank-deficient inconsistent linear system with synthetic data matrix ($m=1000, n=500, r=250$), respectively. Because the required number of epochs of each independent trial is different, the average relative error history is only available up to the minimum number of epochs. For the BRUS and BCUS algorithms, we observe that the convergence rate becomes faster as the increase of the stepsize, and then slows down after reaching the fastest rate. For the EBRUS algorithm, we observe that appropriate stepsize parameters can remarkably improve the convergence.

\begin{figure}[tbhp]
  \centering
  \subfloat[BRUS]{\includegraphics[height=4.4cm]{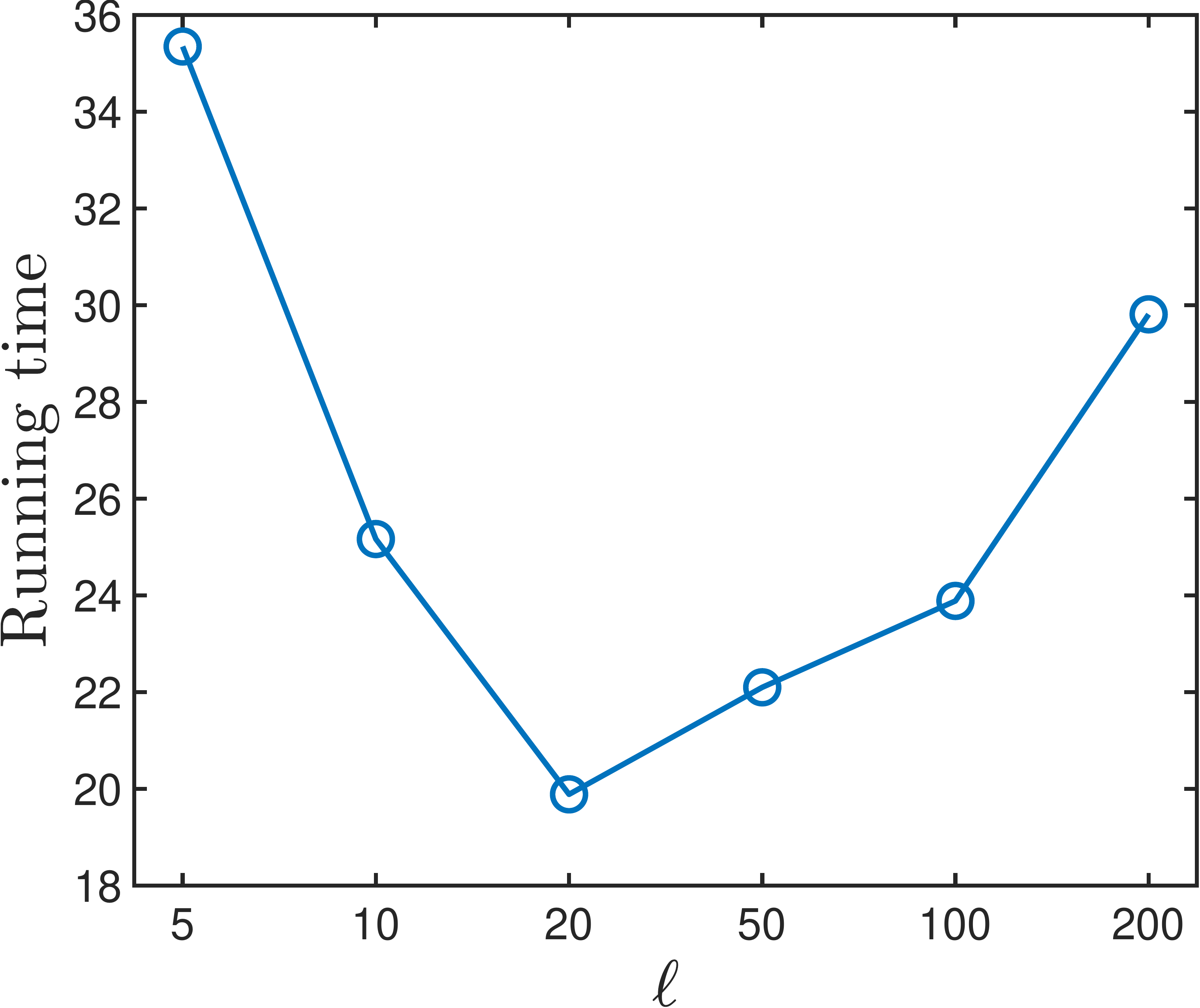}}\quad
  \subfloat[BCUS]{\includegraphics[height=4.4cm]{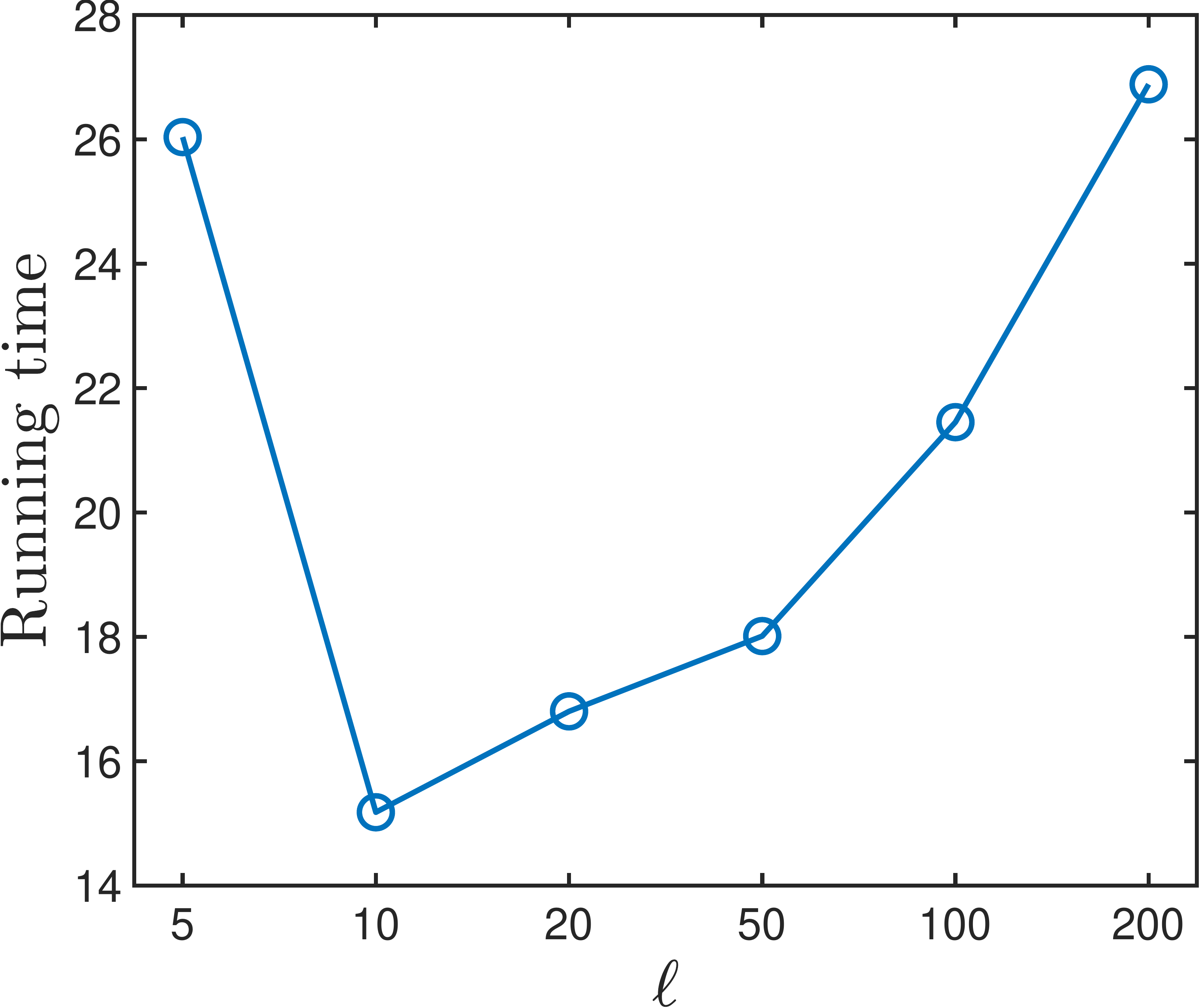}}\quad
  \subfloat[EBRUS]{\includegraphics[height=4.4cm]{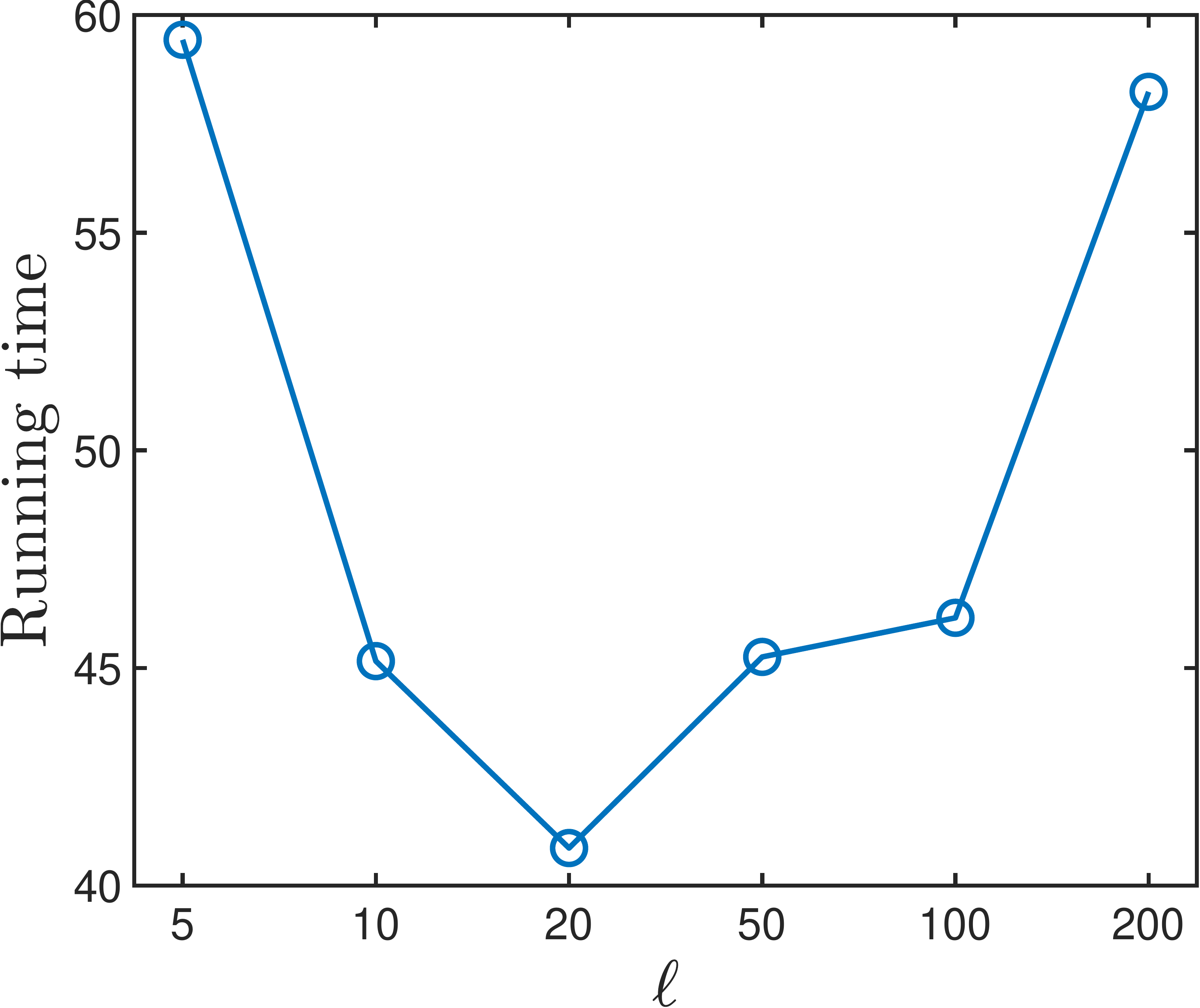}}
  \caption{The running time versus the block size ($\ell=5,10,20,50,100,200$) of the BRUS, BCUS, and EBRUS algorithms with stepsize parameters $\alpha_{\rm r}=2/\wh\lambda_{\max}^{\mcali}$ and  $\alpha_{\rm c}=2/\wh\lambda_{\max}^{\mcalj}$.  (a) BRUS for a consistent linear system with synthetic data matrix ($m=10000, n=5000, r=2500$). (b) BCUS for a full column rank inconsistent linear system with synthetic data matrix ($m=10000, n=5000, r=5000$). (c) EBRUS for a rank-deficient inconsistent linear system with synthetic data matrix ($m=10000, n=5000, r=2500$).}
  \label{fig:ell}
\end{figure}

Next we use three linear systems to investigate how different block sizes affect the performance of the BRUS, BCUS, and EBRUS algorithms. We use the block sizes $\ell=5,10,20,50,100,200$ and the empirical stepsize parameters $\alpha_{\rm r}=2/\wh\lambda_{\max}^{\mcali}$ and  $\alpha_{\rm c}=2/\wh\lambda_{\max}^{\mcalj}$. In Figure \ref{fig:ell}, we plot the running time (for all the reported results in this section, the time for computing stepsize parameters $\alpha_{\rm r}$ and $\alpha_{\rm c}$ is contained) versus the block size of the BRUS, BCUS, and EBRUS algorithms for solving a consistent linear system with synthetic data matrix ($m=10000, n=5000, r=2500$),  a full column rank inconsistent linear system with synthetic data matrix ($m=10000, n=5000, r=5000$), and a rank-deficient inconsistent linear system with synthetic data matrix ($m=10000, n=5000, r=2500$), respectively. For the BRUS, BCUS, and EBRUS algorithms, we observe that the running time first decreases, and then increases after reaching the minimum value with the increase of block size. 

\begin{table}[htbp]
{
  \caption{The number of epochs ({\tt epochs}), the number of iterations ({\tt iters}), the relative error ({\tt relerr}), and the running time ({\tt runtime}) of the RK, GRK, RBK($\ell$), and BRUS($\ell$) algorithms for seven consistent linear systems (both full column rank and rank-deficient cases are included).  Here $\ell$ is the block size.}  \label{tab:cor}
\begin{center}
  \begin{tabular}{|c|c|c|c|c|c|c|c|c|} \hline
   matrix &$m$ & $n$ & rank & algorithm  &  {\tt epochs} &{\tt iters} &{\tt relerr} & {\tt runtime} \\ \hline
    &&&& RK & 51.2 & 25600 & 8.78E$-11$ & 1.89\\     
    \multirow{2}{*}{\tt synth\_urd} & \multirow{2}{*}{500} & \multirow{2}{*}{2000} & \multirow{2}{*}{250} & GRK & {\bf 12.4} & 6200 & 5.78E$-11$& 1.74\\ 
    &&&& RBK(20) & 45.4 & 1135 & 8.30E$-11$& 0.96\\ 
    &&&& BRUS(20) & 42.4 & 1060 & 8.34E$-11$ & {\bf 0.42}\\ \hline
     &&&& RK & 12.0 & 24000 & 4.62E$-11$ & 1.29\\     
    \multirow{2}{*}{\tt synth\_ord} & \multirow{2}{*}{2000} & \multirow{2}{*}{500} & \multirow{2}{*}{250} & GRK & {\bf 2.0} & 4000 & 1.55E$-11$ & 1.10\\ 
    &&&& RBK(20) & 10.6 & 1060 & 5.27E$-11$ & 0.32\\
    &&&& BRUS(20) & 11.2 & 1120 & 4.41E$-11$ & {\bf 0.13}\\ \hline
    &&&& RK & 22.7 & 45400 & 6.71E$-11$ & 2.44\\     
    \multirow{2}{*}{\tt synth\_ofr} & \multirow{2}{*}{2000} & \multirow{2}{*}{500} & \multirow{2}{*}{500} & GRK & {\bf 5.0} & 10000 & 4.06E$-12$& 2.72\\
    &&&& RBK(20) & 21.6 & 2160 & 6.11E$-11$ & 0.63 \\
    &&&& BRUS(20) & 17.8 & 1780 & 5.28E$-11$ & {\bf 0.19}\\ \hline
        &&&& RK  & 12.5 & 182450 & 4.89E$-11$ & 39.68\\ 
  \multirow{2}{*}{\tt abtaha1}  & \multirow{2}{*}{14596} & \multirow{2}{*}{209} & \multirow{2}{*}{209} & GRK & {\bf 1.0} & 14596 & 4.77E$-31$ & 3.19\\ 
    &&&& RBK(20) & 6.5 & 4745 & 4.34E$-11$ & {\bf 2.61} \\
    &&&& BRUS(20) & 13.2 & 9636 & 6.79E$-11$ & 2.88 \\ \hline
    &&&& RK & 11.3 & 10825 & 4.44E$-11$ & 0.48\\ 
  \multirow{2}{*}{\tt ash958}  & \multirow{2}{*}{958}  & \multirow{2}{*}{292} & \multirow{2}{*}{292}  & GRK & {\bf 2.0} & 1916 & 1.17E$-14$ & 0.09\\
    &&&& RBK(10) & 11.0 & 1056 & 3.35E$-11$ & 0.25\\
    &&&& BRUS(10) & 11.1 & 1066 & 4.28E$-11$ & {\bf 0.02}\\ \hline
    &&&& RK &22.0 & 139260 & 5.51E$-11$ & 83.12\\     
    \multirow{2}{*}{\tt lp\_nug15}  & \multirow{2}{*}{6330} & \multirow{2}{*}{22275} & \multirow{2}{*}{5698} & GRK & {\bf 5.0} & 31650 & 1.16E$-12$ & 16.58\\ 
    &&&& RBK(20) & 21.7 & 6879 & 5.77E$-11$ & 8.89\\ 
    &&&& BRUS(20) & 46.6 & 14772 & 7.90E$-11$ & {\bf 8.44}\\ \hline
    &&&& RK & 15.1 & 331052 & 4.09E$-11$ & 117.69\\
    \multirow{2}{*}{\tt relat7}  & \multirow{2}{*}{21924} & \multirow{2}{*}{1045} & \multirow{2}{*}{1012} & GRK & {\bf 2.0} & 43848 & 4.02E$-11$ & 16.67\\
    &&&& RBK(20) & 15.6 & 17113 & 5.78E$-11$ & 10.51\\
    &&&& BRUS(20) & 14.9 & 16345 & 5.24E$-11$ & {\bf 4.86}\\ \hline
  \end{tabular}
\end{center}
}
\end{table} 

\begin{table}[htbp]
{
  \caption{The number of epochs ({\tt epochs}), the number of iterations ({\tt iters}), the relative error ({\tt relerr}), and the running time ({\tt runtime}) of the RCD, GRCD, RBCD($\ell$), and BCUS($\ell$) algorithms for three full column rank inconsistent linear systems.  Here $\ell$ is the block size.}  \label{tab:inf}
\begin{center}
  \begin{tabular}{|c|c|c|c|c|c|c|c|c|} \hline
   matrix & $m$ & $n$ & rank & algorithm & {\tt epochs} & {\tt iters}& {\tt relerr} & {\tt runtime} \\ \hline
    &&&& RCD & 97.8 & 48900 & 8.68E$-11$ & 1.36\\ 
 \multirow{2}{*}{\tt synth\_ofr} & \multirow{2}{*}{2000} & \multirow{2}{*}{500} & \multirow{2}{*}{500} & GRCD & {\bf 29.6} & 14800 & 7.47E$-11$ & 3.59\\ 
    &&&& RBCD(20) & 90.7 & {\bf 2268} & 8.83E$-11$ & 0.81\\ 
    &&&& BCUS(20) & 125.3 & 3133 & 9.05E$-11$ & {\bf 0.31}\\ \hline
    &&&& RCD  & 769.9 & 160909 & 9.79E$-11$ & 6.46\\ 
  \multirow{2}{*}{\tt abtaha1}  & \multirow{2}{*}{14596} & \multirow{2}{*}{209} & \multirow{2}{*}{209} & GRCD & {\bf 133.6} & 27922 & 9.42E$-11$ & 1.89\\ 
    &&&& RBCD(5) & 472.0 & {\bf 19824} & 9.81E$-11$ & 15.94 \\
    &&&& BCUS(5) & 1056.9 & 44390 & 9.93E$-11$ & {\bf 1.26} \\ \hline
    &&&& RCD & 33.2 & 9694 & 6.61E$-11$ & 0.26 \\ 
  \multirow{2}{*}{\tt ash958}  & \multirow{2}{*}{958}  & \multirow{2}{*}{292} & \multirow{2}{*}{292}  & GRCD & {\bf 6.0} & 1752 & 1.75E$-11$ & 0.06\\ 
    &&&& RBCD(5) & 23.4 & {\bf 1381} & 4.79E$-11$ & 0.10 \\
    &&&& BCUS(5) & 53.0 & 3127 & 8.48E$-11$ & {\bf 0.02}\\  \hline
  \end{tabular}
\end{center}
}
\end{table}

\begin{table}[htbp]
{
\caption{The number of epochs ({\tt epochs}), the number of iterations ({\tt iters}), the relative error ({\tt relerr}), and the running time ({\tt runtime}) of the REK, TREK, REBK($\ell$), REABK($\ell$), and EBRUS($\ell$) algorithms for four rank-deficient inconsistent linear systems. Here $\ell$ is the block size.}  \label{tab:inr}
\begin{center}
  \begin{tabular}{|c|c|c|c|c|c|c|c|c|} \hline
   matrix & $m$ & $n$ & rank& algorithm & {\tt epochs} & {\tt iters}& {\tt relerr} & {\tt runtime} \\ \hline
    & &  & & REK & 17.6 & 35200 & 6.28E$-11$ & 3.96 \\
 &&&& TREK  & 17.7 & 17700 & 4.17E$-11$ & 5.08  \\
    {\tt synth\_urd}   & {500} & {2000} & {250} &  REBK(20) & 15.6 & 1560 & 5.15E$-11$ & 1.61 \\     
    &&&&  REABK(20) & 18.4 & 1840 & 5.48E$-11$ & {\bf 0.57} \\
    &&&&  EBRUS(20) & 15.6 & 1560 & 4.29E$-11$ & 0.67 \\ \hline
 & &  & & REK & 16.9 & 33800 & 5.82E$-11$ & 2.83 \\
  &   &   &   & TREK & 16.7 & 16700 & 4.88E$-11$ & 3.22 \\
    {\tt synth\_ord}   & {2000} & {500} & {250}&  REBK(20) & 15.1 & 1510 & 5.09E$-11$ & 1.16\\
    &&&&  REABK(20) & 18.0 & 1800 & 5.88E$-11$ & 0.41 \\
    &&&&  EBRUS(20) & 15.2 & 1520 & 4.22E$-11$ & {\bf 0.31}\\ \hline
    &&&&  REK & 8.0 & 178200 & 8.69E$-12$ & 160.08 \\
   &&&&  TREK & 8.0 & 89104 & 1.34E$-11$ & 218.18 \\
    {\tt lp\_nug15}   &  {6330} &  {22275} &  {5698}&  REBK(20) & 13.1 & 14593 & 5.31E$-11$ & 26.18 \\    
    &&&&  REABK(20) & 20.8 & 23171 & 5.08E$-11$ & 18.20 \\
    &&&&  EBRUS(20) & 15.7 & 17490 & 5.38E$-11$ & {\bf 10.18} \\ \hline
    &&&&  REK & 16.9 & 370516 & 4.61E$-11$ & 154.30 \\
    &&&& TREK & 17.7 & 194027 & 5.83E$-11$ & 229.91 \\ 
    {\tt relat7}   & {21924} &  {1045} &  {1012} & REBK(20) & 15.0 & 16455 & 5.89E$-11$ & 35.36 \\
    &&&&  REABK(20) & 124.4 & 136467 & 9.26E$-11$ & 53.22 \\
    &&&& EBRUS(20) & 17.2 & 18868 & 5.74E$-11$ & {\bf 6.41} \\ \hline
  \end{tabular}
\end{center}
}
\end{table}

Last we compare the thirteen algorithms (in three groups) using three synthetic data matrices ({\tt synth\_urd}, {\tt synth\_ord}, and {\tt synth\_ofr})  and four real-world data matrices ({\tt abtaha1}, {\tt ash958}, {\tt lp\_nug15}, and {\tt relat7}) from the SuiteSparse Matrix Collection (formerly known as the University of Florida Sparse Matrix Collection) \cite{davis2011unive}. The four matrices, {\tt synth\_urd}, {\tt synth\_ord}, {\tt lp\_nug15}, and {\tt relat7},  are rank-deficient, and the other three matrices, {\tt synth\_ofr}, {\tt abtaha1}, and {\tt ash958}, have full column rank. 

In Table \ref{tab:cor}, we report the numerical results of the RK, GRK, RBK, and BRUS algorithms for seven consistent linear systems (both full column rank and rank-deficient cases are included). For the BRUS algorithm, we use the empirical stepsize parameter $\alpha_{\rm r}=2/\wh\lambda_{\max}^{\mcali}$. For all cases, the GRK algorithm is the best in terms of the number of epochs. For the case of {\tt abtaha1}, the RBK algorithm is the best in terms of the running time; and for the other six cases, the BRUS algorithm is the best.

In Table \ref{tab:inf}, we report the numerical results of the RCD, GRCD, RBCD, and BCUS algorithms for three full column rank inconsistent linear systems. For the BCUS algorithm, we use the empirical stepsize parameter $\alpha_{\rm c}=1/\wh\lambda_{\max}^{\mcalj}$. For all cases, the GRCD algorithm is the best in terms of the number of epochs, the RBCD algorithm is the best in terms of the number of iterations, and the BCUS algorithm is the best in terms of the running time. 

In Table \ref{tab:inr}, we report the numerical results of the REK, TREK, REBK, REABK, and EBRUS algorithms for four rank-deficient inconsistent linear systems. For the REABK algorithm, we use the same empirical stepsize parameter reported in \cite{du2020rando}. For the EBRUS algorithm, we use the empirical stepsize parameters $\alpha_{\rm r}=2/\widehat\lambda_{\max}^{\mcali}$ and $\alpha_{\rm c}=2/\wh\lambda_{\max}^{\mcalj}$. For the case of {\tt synth\_urd}, the REABK algorithm is the best in terms of the running time; and for the other three cases, the EBRUS algorithm is the best.
 
\section{Concluding remarks}
We have proposed two novel pseudoinverse-free randomized block iterative algorithms for solving consistent and inconsistent linear systems of equations. Our main results show that our algorithms converge linearly in the mean square sense to a (least squares) solution of the linear system $\mbf A\mbf x=\mbf b$. By using uniform sampling, we have designed the BRUS algorithm for solving consistent linear systems, the BCUS algorithm for solving full column rank inconsistent linear systems, and the EBRUS algorithm for solving rank-deficient inconsistent linear systems. Numerical experiments for both synthetic and real-world matrices demonstrate that the BRUS, BCUS, and EBRUS algorithms with appropriate stepsize parameters and block size can significantly outperform several existing randomized algorithms in terms of the running time.   

\section*{Acknowledgments}
The authors are thankful to the referees for their detailed comments and valuable suggestions that have led to remarkable improvements. This work was supported by the National Natural Science Foundation of China (No.12171403 and No.11771364), the Natural Science Foundation of Fujian Province of China (No.2020J01030), and the Fundamental Research Funds for the Central Universities (No.20720210032).
\appendix

\section{Extended block column and row sampling iterative algorithm}\label{appendix}

The RCD algorithm \cite{leventhal2010rando} for $\bf Az=b$ with arbitrary initial guess $\mbf z^0\in\mbbr^n$ produces a sequence $\{\mbf z^k\}$, which satisfies $\mbf A\mbf z^k\rightarrow\mbf A\mbf A^\dag\mbf b$. Ma, Needell, and Ramdas \cite{ma2015conve} proved that the $k$th iterate $\mbf x^k$ (which is produced by one RK update for ${\bf Ax=Az}^k$ from $\mbf x^{k-1}$) of the randomized extended Gauss--Seidel (REGS) algorithm \cite{ma2015conve} converges to a solution of $\bf Ax=AA^\dag b$. In this section, based on the idea of the REGS algorithm, we propose an extended block column and row sampling iterative (EBCRSI) algorithm.

Given arbitrary $\mbf z^0\in\mbbr^n$ and $\mbf x^0\in\mbbr^n$, the iterates of the EBCRSI algorithm at step $k$ are defined as 
\begin{align}
\mbf z^k &=\mbf z^{k-1}-\alpha_{\rm c}\mbf T\mbf T^\top\mbf A^\top(\mbf A\mbf z^{k-1}-\mbf b),\label{rgsz}\\
\mbf x^k & =\mbf x^{k-1}-\alpha_{\rm r}\mbf A^\top\mbf S\mbf S^\top\mbf A(\mbf x^{k-1}-\mbf z^k),\label{rgsx}
	\end{align}  where the random parameter matrices $\mbf S$ and $\mbf T$ are independent, and satisfy $$\mbbe\bem \mbf S\mbf S^\top \eem=\mbf I,\qquad \mbbe\bem \mbf T\mbf T^\top \eem=\mbf I.$$ We note that the iteration (\ref{rgsz}) is the BCSI algorithm for $\bf A z=b$ with arbitrary initial guess $\mbf z^0\in\mbbr^n$, and the iterate $\mbf x^k$ in (\ref{rgsx}) is one BRSI update for ${\bf Ax=Az}^k$ from $\mbf x^{k-1}$. By Theorem \ref{cslt}, we have \beq\label{sun2}\mbbe\bem \|\mbf A(\mbf z^k-\mbf A^\dag\mbf b)\|^2\eem\leq \eta_{\rm c}^k\|\mbf A(\mbf z^0-\mbf A^\dag\mbf b)\|^2,\eeq

In the following, we shall present two convergence results of the EBCRSI algorithms: Theorem \ref{rgs} is on the convergence of $\|\mbbe\bem \mbf x^k\eem-\mbf x_\star^0\|$, and Theorem \ref{nrgs} is on the convergence of $\mbbe\bem\|\mbf x^k-\mbf x_\star^0\|^2\eem$. We emphasize that both the convergence results hold for arbitrary linear systems.

\begin{theorem}\label{rgs}
 For arbitrary $\mbf z^0\in\mbbr^n$ and $\mbf x^0\in\mbbr^n$, the $k$th iterate $\mbf x^k$ of the {\rm EBCRSI} algorithm satisfies $$\mbbe\bem \mbf x^k-\mbf x_\star^0\eem=(\mbf I-\alpha_{\rm r}\mbf A^\top\mbf A)^k(\mbf x^0-\mbf x_\star^0)+\alpha_{\rm r}\sum_{i=0}^{k-1}(\mbf I-\alpha_{\rm r}\mbf A^\top\mbf A)^i(\mbf I-\alpha_{\rm c}\mbf A^\top\mbf A)^{k-i}\mbf A^\top(\mbf A\mbf z^0-\mbf b).$$ Moreover, \beq\label{ergs}
 	\|\mbbe\bem\mbf x^k\eem-\mbf x_\star^0\|\leq\delta^k(\|\mbf x^0-\mbf x_\star^0\|+k\alpha_{\rm r}\|\mbf A^\top(\mbf A\mbf z^0-\mbf b)\|)  
 \eeq
 where $$\delta=\max_{1\leq i\leq r}\{|1-\alpha_{\rm r}\sigma_i^2(\mbf A)|,|1-\alpha_{\rm c}\sigma_i^2(\mbf A)|\}.$$
\end{theorem}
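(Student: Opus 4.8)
The plan is to mirror the proof of Theorem~\ref{seli}, exploiting that the {\rm EBCRSI} iteration \eqref{rgsz}--\eqref{rgsx} has the same two-layer structure as the {\rm EBRSI} iteration: an inner update driven by $\mbf T$ feeds a quantity into an outer update driven by the independent matrix $\mbf S$. The first step is to track the decay in expectation of the residual quantity $\mbf A^\top(\mbf A\mbf z^k-\mbf b)$ attached to the $\mbf z$-sequence, since this is precisely the vector appearing in the claimed formula. Taking the conditional expectation of \eqref{rgsz} and using $\mbbe\bem\mbf T\mbf T^\top\eem=\mbf I$ gives $\mbbe_{k-1}\bem\mbf z^k\eem=\mbf z^{k-1}-\alpha_{\rm c}\mbf A^\top(\mbf A\mbf z^{k-1}-\mbf b)$. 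Multiplying by $\mbf A^\top\mbf A$, subtracting $\mbf A^\top\mbf b$, and collecting terms yields $\mbbe_{k-1}\bem\mbf A^\top(\mbf A\mbf z^k-\mbf b)\eem=(\mbf I-\alpha_{\rm c}\mbf A^\top\mbf A)\mbf A^\top(\mbf A\mbf z^{k-1}-\mbf b)$, so by the law of total expectation $\mbbe\bem\mbf A^\top(\mbf A\mbf z^k-\mbf b)\eem=(\mbf I-\alpha_{\rm c}\mbf A^\top\mbf A)^k\mbf A^\top(\mbf A\mbf z^0-\mbf b)$.

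Next I would handle the outer $\mbf x$-update. Writing \eqref{rgsx} as $\mbf x^k-\mbf x_\star^0=\mbf x^{k-1}-\mbf x_\star^0-\alpha_{\rm r}\mbf A^\top\mbf S\mbf S^\top\mbf A(\mbf x^{k-1}-\mbf z^k)$ and using the same nested-conditioning convention as for the {\rm EBRSI} algorithm (averaging over $\mbf S$ with $\mbf z^k$ held fixed via $\mbbe\bem\mbf S\mbf S^\top\eem=\mbf I$, then over $\mbf T$), I obtain $\mbbe_{k-1}\bem\mbf x^k-\mbf x_\star^0\eem=\mbf x^{k-1}-\mbf x_\star^0-\alpha_{\rm r}\mbf A^\top\mbf A\bigl(\mbf x^{k-1}-\mbbe_{k-1}\bem\mbf z^k\eem\bigr)$. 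The key algebraic move is to insert $\mbf A^\top\mbf A\mbf x_\star^0=\mbf A^\top\mbf b$ (valid by the definition of $\mbf x_\star^0$) so as to split the right-hand side into a contraction acting on $\mbf x^{k-1}-\mbf x_\star^0$ plus a forcing term:
$$\mbbe_{k-1}\bem\mbf x^k-\mbf x_\star^0\eem=(\mbf I-\alpha_{\rm r}\mbf A^\top\mbf A)(\mbf x^{k-1}-\mbf x_\star^0)+\alpha_{\rm r}\mbf A^\top(\mbf A\mbbe_{k-1}\bem\mbf z^k\eem-\mbf b).$$
Applying the law of total expectation and substituting the decay formula from the first step turns this into the linear recurrence $\mbbe\bem\mbf x^k-\mbf x_\star^0\eem=(\mbf I-\alpha_{\rm r}\mbf A^\top\mbf A)\mbbe\bem\mbf x^{k-1}-\mbf x_\star^0\eem+\alpha_{\rm r}(\mbf I-\alpha_{\rm c}\mbf A^\top\mbf A)^k\mbf A^\top(\mbf A\mbf z^0-\mbf b)$, and unrolling it produces the stated closed form, with the geometric-sum bookkeeping giving the exponents $(\mbf I-\alpha_{\rm r}\mbf A^\top\mbf A)^i(\mbf I-\alpha_{\rm c}\mbf A^\top\mbf A)^{k-i}$ for $i=0,\dots,k-1$.

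For the norm estimate \eqref{ergs}, I would take the Euclidean norm of the closed form and apply the triangle inequality. Since $\mbf x^0-\mbf x_\star^0=\mbf A^\dag(\mbf A\mbf x^0-\mbf b)\in\ran(\mbf A^\top)$ and $\mbf A^\top(\mbf A\mbf z^0-\mbf b)\in\ran(\mbf A^\top)$, Lemma~\ref{leqd} (with the roles of $\alpha,\beta$ played by $\alpha_{\rm c},\alpha_{\rm r}$) bounds each factor $\|(\mbf I-\alpha_{\rm r}\mbf A^\top\mbf A)^i(\mbf I-\alpha_{\rm c}\mbf A^\top\mbf A)^{k-i}\mbf u\|$ by $\delta^k\|\mbf u\|$ for every $0\le i\le k$; summing the $k$ identical bounds over the forcing terms produces the factor $k\alpha_{\rm r}$ and hence \eqref{ergs}. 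I expect no genuine obstacle here: the only points demanding care are the correct ordering of the nested conditional expectations and verifying that the vector carried through the recurrence is $\mbf A^\top(\mbf A\mbf z^k-\mbf b)$ rather than $\mbf z^k$ itself, which is exactly what makes the factor $(\mbf I-\alpha_{\rm c}\mbf A^\top\mbf A)^k$ appear with the correct argument.
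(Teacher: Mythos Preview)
Your proposal is correct and follows essentially the same approach as the paper's own proof: establish a one-step recurrence for $\mbbe\bem\mbf x^k-\mbf x_\star^0\eem$ using the nested conditioning $\mbbe_{k-1}\bem\cdot\eem=\mbbe_{k-1}\bem\mbbe_{k-1}^{\rm r}\bem\cdot\eem\eem$, feed in the geometric decay of the $\mbf z$-sequence, unroll, and then apply Lemma~\ref{leqd} with the triangle inequality to obtain \eqref{ergs}. The only cosmetic difference is that the paper tracks the $m$-vector $\mbf A\mbf z^k-\mbf A\mbf A^\dag\mbf b$ (getting powers of $\mbf I-\alpha_{\rm c}\mbf A\mbf A^\top$) and then uses the intertwining $\mbf A^\top(\mbf I-\alpha_{\rm c}\mbf A\mbf A^\top)^k=(\mbf I-\alpha_{\rm c}\mbf A^\top\mbf A)^k\mbf A^\top$ together with $\mbf A^\top\mbf A\mbf A^\dag\mbf b=\mbf A^\top\mbf b$ at the end, whereas you track the $n$-vector $\mbf A^\top(\mbf A\mbf z^k-\mbf b)$ directly and thereby land on $(\mbf I-\alpha_{\rm c}\mbf A^\top\mbf A)^k$ without that extra step; your bookkeeping is marginally more economical but the argument is the same.
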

	
\begin{proof} By $\mbf A^\top\mbf A\mbf A^\dag\mbf b=\mbf A^\top\mbf b$ and (\ref{rgsz}), we have
$$\mbf A\mbf z^k-\mbf A\mbf A^\dag \mbf b =\mbf A\mbf z^{k-1}-\mbf A\mbf A^\dag \mbf b-\alpha_{\rm c}\mbf A\mbf T\mbf T^\top\mbf A^\top(\mbf A\mbf z^{k-1}-\mbf A\mbf A^\dag \mbf b).$$ Taking conditional expectation conditioned on $\mbf z^{k-1}$ and $\mbf x^{k-1}$, we have
	\begin{align*}\mbbe_{k-1}\bem\mbf A\mbf z^k-\mbf A\mbf A^\dag \mbf b\eem &  =\mbf A\mbf z^{k-1}-\mbf A\mbf A^\dag \mbf b-\alpha_{\rm c}\mbf A\mbf A^\top(\mbf A\mbf z^{k-1}-\mbf A\mbf A^\dag \mbf b)\\ &=(\mbf I-\alpha_{\rm c}\mbf A\mbf A^\top)(\mbf A\mbf z^{k-1}-\mbf A\mbf A^\dag \mbf b).\end{align*} Then, by the law of total expectation, we have
\begin{align*}\mbbe\bem\mbf A\mbf z^k-\mbf A\mbf A^\dag \mbf b\eem &=(\mbf I-\alpha_{\rm c}\mbf A\mbf A^\top)\mbbe\bem\mbf A\mbf z^{k-1}-\mbf A\mbf A^\dag \mbf b\eem\\ &=(\mbf I-\alpha_{\rm c}\mbf A\mbf A^\top)^k(\mbf A\mbf z^0-\mbf A\mbf A^\dag \mbf b).\end{align*}
Taking expectation conditioned on $\mbf z^{k-1}$ and $\mbf x^{k-1}$ for $$\mbf x^k-\mbf x_\star^0 =\mbf x^{k-1}-\mbf x_\star^0-\alpha_{\rm r}\mbf A^\top\mbf S\mbf S^\top\mbf A(\mbf x^{k-1}-\mbf z^k),$$ we obtain
\begin{align*}\mbbe_{k-1}\bem\mbf x^k-\mbf x_\star^0\eem  
& =\mbbe_{k-1}\bem\mbbe_{k-1}^{\rm r}\bem \mbf x^{k-1}-\mbf x_\star^0 -\alpha_{\rm r}\mbf A^\top\mbf S\mbf S^\top\mbf A(\mbf x^{k-1}-\mbf z^k)\eem\eem\\
& =\mbbe_{k-1}\bem\mbf x^{k-1}-\mbf x_\star^0 -\alpha_{\rm r}\mbf A^\top\mbf A(\mbf x^{k-1}-\mbf x_\star^0+\mbf x_\star^0-\mbf z^k)\eem\\
& =(\mbf I-\alpha_{\rm r}\mbf A^\top\mbf A)(\mbf x^{k-1}-\mbf x_\star^0)+\alpha_{\rm r}\mbf A^\top\mbbe_{k-1}\bem\mbf A\mbf z^k-\mbf A\mbf x_\star^0\eem\\
& =(\mbf I-\alpha_{\rm r}\mbf A^\top\mbf A)(\mbf x^{k-1}-\mbf x_\star^0)+\alpha_{\rm r}\mbf A^\top\mbbe_{k-1}\bem\mbf A\mbf z^k-\mbf A\mbf A^\dag\mbf b\eem,
\end{align*} which, by the law of total expectation, yields
\begin{align*}
	\mbbe\bem\mbf x^k-\mbf x_\star^0\eem
	&=(\mbf I-\alpha_{\rm r}\mbf A^\top\mbf A)\mbbe\bem\mbf x^{k-1}-\mbf x_\star^0\eem +\alpha_{\rm r}\mbf A^\top\mbbe\bem\mbf A\mbf z^k-\mbf A\mbf A^\dag\mbf b\eem\\
	&=(\mbf I-\alpha_{\rm r}\mbf A^\top\mbf A)\mbbe\bem\mbf x^{k-1}-\mbf x_\star^0\eem +\alpha_{\rm r}\mbf A^\top(\mbf I-\alpha_{\rm c}\mbf A\mbf A^\top)^k(\mbf A\mbf z^0-\mbf A\mbf A^\dag \mbf b)\\
	&=(\mbf I-\alpha_{\rm r}\mbf A^\top\mbf A)\mbbe\bem\mbf x^{k-1}-\mbf x_\star^0\eem +\alpha_{\rm r}(\mbf I-\alpha_{\rm c}\mbf A^\top\mbf A)^k\mbf A^\top(\mbf A\mbf z^0-\mbf A\mbf A^\dag \mbf b)\\
	&=(\mbf I-\alpha_{\rm r}\mbf A^\top\mbf A)\mbbe\bem\mbf x^{k-1}-\mbf x_\star^0\eem +\alpha_{\rm r}(\mbf I-\alpha_{\rm c}\mbf A^\top\mbf A)^k\mbf A^\top(\mbf A\mbf z^0- \mbf b)\\
	&=(\mbf I-\alpha_{\rm r}\mbf A^\top\mbf A)^2\mbbe\bem\mbf x^{k-2}-\mbf x_\star^0\eem +\alpha_{\rm r}(\mbf I-\alpha_{\rm c}\mbf A^\top\mbf A)^k\mbf A^\top(\mbf A\mbf z^0-\mbf b)\\
	&\quad \ +\alpha_{\rm r}(\mbf I-\alpha_{\rm r}\mbf A^\top\mbf A)(\mbf I-\alpha_{\rm c}\mbf A^\top\mbf A)^{k-1}\mbf A^\top(\mbf A\mbf z^0- \mbf b)\\
	&=\cdots\\
	&=(\mbf I-\alpha_{\rm r}\mbf A^\top\mbf A)^k(\mbf x^0-\mbf x_\star^0)+\alpha_{\rm r}\sum_{i=0}^{k-1}(\mbf I-\alpha_{\rm r}\mbf A^\top\mbf A)^i(\mbf I-\alpha_{\rm c}\mbf A^\top\mbf A)^{k-i}\mbf A^\top(\mbf A\mbf z^0- \mbf b).
\end{align*} Taking 2-norm, by triangle inequality, $\mbf x^0-\mbf x_\star^0\in\ran(\mbf A^\top)$, $\mbf A^\top(\mbf A\mbf z^0- \mbf b)\in\ran(\mbf A^\top)$, and Lemma \ref{leqd}, we obtain the estimate (\ref{ergs}).
\end{proof}	
	
\begin{remark}
	In Theorem \ref{rgs}, no assumptions about the dimensions or rank of $\mbf A$ are assumed, and the system $\bf Ax=b$ can be consistent or inconsistent. If $0<\alpha_{\rm r}<2/\sigma_{\max}^2(\mbf A)$ and $0<\alpha_{\rm c}<2/\sigma_{\max}^2(\mbf A)$, then $0<\delta<1$. This means $\mbf x^k$ is an asymptotically unbiased estimator for $\mbf x_\star^0$.
\end{remark}	

\begin{theorem}\label{nrgs}
Assume that $0<\alpha_{\rm c}<2/\lambda_{\max}^{\rm c}$ and $0<\alpha_{\rm r}<2/\lambda_{\max}^{\rm r}$. For arbitrary $\mbf z^0\in\mbbr^n$, $\mbf x^0\in\mbbr^n$, and $\ve>0$, the $k$th iterate $\mbf x^k$ of the {\rm EBCRSI} algorithm satisfies 
\begin{align*}
\mbbe\bem\|\mbf x^k-\mbf x_\star^0\|^2\eem &\leq(1+\ve)^k\eta_{\rm r}^k\|\mbf x^0-\mbf x_\star^0\|^2\\ 
&\quad +(1+1/\ve)\alpha_{\rm r}^2\lambda_{\max}^{\rm r}\|\mbf A(\mbf z^0-\mbf A^\dag\mbf b)\|^2\sum_{i=0}^{k-1}\eta_{\rm c}^{k-i}(1+\ve)^i\eta_{\rm r}^i,
\end{align*}  where $$\eta_{\rm r}=1-\alpha_{\rm r}(2-\alpha_{\rm r}\lambda_{\max}^{\rm r})\sigma_{\min}^2(\mbf A),\quad \mbox{and}\quad  \eta_{\rm c}=1-\alpha_{\rm c}(2-\alpha_{\rm c}\lambda_{\max}^{\rm c})\sigma_{\min}^2(\mbf A).$$
\end{theorem}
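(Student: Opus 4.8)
The plan is to mirror the proof of Theorem \ref{sel} for the EBRSI algorithm, exploiting the close structural parallel between the two extended schemes. The central device is to compare the actual iterate $\mbf x^k$ against the \emph{exact} BRSI update
$$\wh{\mbf x}^k=\mbf x^{k-1}-\alpha_{\rm r}\mbf A^\top\mbf S\mbf S^\top\mbf A(\mbf x^{k-1}-\mbf x_\star^0),$$
which is one BRSI step for the system $\mbf A\mbf x=\mbf A\mbf x_\star^0$ started from $\mbf x^{k-1}$. Subtracting this from the EBCRSI update (\ref{rgsx}) and using the identity $\mbf A\mbf x_\star^0=\mbf A\mbf A^\dag\mbf b$ gives the clean expression $\mbf x^k-\wh{\mbf x}^k=\alpha_{\rm r}\mbf A^\top\mbf S\mbf S^\top\mbf A(\mbf z^k-\mbf A^\dag\mbf b)$. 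Here lies the one genuine difference from the EBRSI case: because the coupling in (\ref{rgsx}) passes through $\mbf A\mbf z^k$ rather than through $\mbf z^k$ itself, the natural error quantity for the $\mbf z$-sequence is $\|\mbf A(\mbf z^k-\mbf A^\dag\mbf b)\|^2$, which is precisely what the BCSI convergence estimate (\ref{sun2}) controls via Theorem \ref{cslt}.

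Next I would bound the perturbation term. Writing $\mbf v=\mbf A(\mbf z^k-\mbf A^\dag\mbf b)$ and using $\lambda_{\max}(\mbf S^\top\mbf A\mbf A^\top\mbf S)=\lambda_{\max}(\mbf A^\top\mbf S\mbf S^\top\mbf A)\leq\lambda_{\max}^{\rm r}$, one obtains $\|\mbf x^k-\wh{\mbf x}^k\|^2=\alpha_{\rm r}^2\mbf v^\top\mbf S\mbf S^\top\mbf A\mbf A^\top\mbf S\mbf S^\top\mbf v\leq\alpha_{\rm r}^2\lambda_{\max}^{\rm r}\mbf v^\top\mbf S\mbf S^\top\mbf v$. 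Taking the conditional expectation $\mbbe_{k-1}^{\rm r}\bem\cdot\eem$ (under which $\mbf z^k$, hence $\mbf v$, is frozen) and using $\mbbe\bem\mbf S\mbf S^\top\eem=\mbf I$ yields $\mbbe_{k-1}^{\rm r}\bem\|\mbf x^k-\wh{\mbf x}^k\|^2\eem\leq\alpha_{\rm r}^2\lambda_{\max}^{\rm r}\|\mbf A(\mbf z^k-\mbf A^\dag\mbf b)\|^2$. Applying the law of total expectation and then the estimate (\ref{sun2}) gives $\mbbe\bem\|\mbf x^k-\wh{\mbf x}^k\|^2\eem\leq\alpha_{\rm r}^2\lambda_{\max}^{\rm r}\eta_{\rm c}^k\|\mbf A(\mbf z^0-\mbf A^\dag\mbf b)\|^2$, which is the source of the inhomogeneous term in the stated bound.

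In parallel, the ``clean'' iterate $\wh{\mbf x}^k$ obeys the single-step BRSI contraction. After verifying by induction that $\mbf x^{k-1}-\mbf x_\star^0\in\ran(\mbf A^\top)$ (the increment in (\ref{rgsx}) lies in $\ran(\mbf A^\top)$ and $\mbf x^0-\mbf x_\star^0=\mbf A^\dag\mbf A\mbf x^0-\mbf A^\dag\mbf b\in\ran(\mbf A^\top)$), the computation used for Theorem \ref{rslt} shows $\mbbe\bem\|\wh{\mbf x}^k-\mbf x_\star^0\|^2\eem\leq\eta_{\rm r}\mbbe\bem\|\mbf x^{k-1}-\mbf x_\star^0\|^2\eem$. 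Combining the two pieces through the triangle inequality and Young's inequality, $\|\mbf x^k-\mbf x_\star^0\|^2\leq(1+1/\ve)\|\mbf x^k-\wh{\mbf x}^k\|^2+(1+\ve)\|\wh{\mbf x}^k-\mbf x_\star^0\|^2$, produces the one-step recursion
$$\mbbe\bem\|\mbf x^k-\mbf x_\star^0\|^2\eem\leq(1+\ve)\eta_{\rm r}\,\mbbe\bem\|\mbf x^{k-1}-\mbf x_\star^0\|^2\eem+(1+1/\ve)\alpha_{\rm r}^2\lambda_{\max}^{\rm r}\eta_{\rm c}^k\|\mbf A(\mbf z^0-\mbf A^\dag\mbf b)\|^2,$$
and unrolling it exactly as in the proof of Theorem \ref{sel} yields the claimed geometric-plus-convolution bound.

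As for difficulty, there is no deep obstacle; the argument is a disciplined adaptation of the EBRSI proof. The only points requiring care are the bookkeeping of the two-level conditional expectation $\mbbe_{k-1}\bem\cdot\eem=\mbbe_{k-1}\bem\mbbe_{k-1}^{\rm r}\bem\cdot\eem\eem$, so that $\mbf S$ is averaged only after $\mbf z^k$ is fixed, and the recognition that the relevant $\mbf z$-error must be measured in the seminorm $\|\mbf A(\cdot)\|$ rather than the full Euclidean norm, so that Theorem \ref{cslt} (through (\ref{sun2})) rather than Theorem \ref{rslt} supplies the decay of the coupling term.
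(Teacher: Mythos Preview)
Your proposal is correct and follows essentially the same approach as the paper's proof. The only cosmetic difference is that you introduce the auxiliary iterate $\wh{\mbf x}^k$ (mirroring the proof of Theorem~\ref{sel}), whereas the paper's proof of Theorem~\ref{nrgs} carries out the identical decomposition directly by writing $\mbf x^k-\mbf x_\star^0=(\mbf I-\alpha_{\rm r}\mbf A^\top\mbf S\mbf S^\top\mbf A)(\mbf x^{k-1}-\mbf x_\star^0)-\alpha_{\rm r}\mbf A^\top\mbf S\mbf S^\top\mbf A(\mbf x_\star^0-\mbf z^k)$ without naming the first summand; the estimates, the use of $\lambda_{\max}(\mbf S^\top\mbf A\mbf A^\top\mbf S)=\lambda_{\max}(\mbf A^\top\mbf S\mbf S^\top\mbf A)$, the appeal to (\ref{sun2}), the Young-inequality split, and the unrolling are all the same.
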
	

\begin{proof} By (\ref{rgsx}), we have $$\mbf x^k-\mbf x_\star^0=\mbf x^{k-1}-\mbf x_\star^0-\alpha_{\rm r}\mbf A^\top\mbf S\mbf S^\top\mbf A(\mbf x^{k-1}-\mbf x_\star^0+\mbf x_\star^0-\mbf z^k).$$ By triangle inequality and Young's inequality, we have 
	\begin{align*}\|\mbf x^k-\mbf x_\star^0\|^2
	&\leq (\|(\mbf I-\alpha_{\rm r}\mbf A^\top\mbf S\mbf S^\top\mbf A)(\mbf x^{k-1}-\mbf x_\star^0)\|+\alpha_{\rm r}\|\mbf A^\top\mbf S\mbf S^\top\mbf A(\mbf z^k-\mbf x_\star^0)\|)^2\\ 
	&\leq(1+\ve)\|(\mbf I-\alpha_{\rm r}\mbf A^\top\mbf S\mbf S^\top\mbf A)(\mbf x^{k-1}-\mbf x_\star^0)\|^2+(1+1/\ve)\alpha_{\rm r}^2\|\mbf A^\top\mbf S\mbf S^\top\mbf A(\mbf z^k-\mbf x_\star^0)\|^2.	
	\end{align*} 
By $\mbf x^0-\mbf x_\star^0\in\ran(\mbf A^\top)$ and $\mbf A^\top\mbf S\mbf S^\top\mbf A(\mbf x^{k-1}-\mbf z^k)\in\ran(\mbf A^\top)$, we can show that $\mbf x^k-\mbf x_\star^0\in\ran(\mbf A^\top)$ by induction. It follows that
\begin{align*}
	\|(\mbf I-\alpha_{\rm r}\mbf A^\top\mbf S\mbf S^\top\mbf A)(\mbf x^{k-1}-\mbf x_\star^0)\|^2 
	& = \|\mbf x^{k-1}-\mbf x_\star^0\|^2-2\alpha_{\rm r}(\mbf x^{k-1}-\mbf x_\star^0)^\top\mbf A^\top\mbf S\mbf S^\top\mbf A(\mbf x^{k-1}-\mbf x_\star^0)\\
	&\ \quad +\alpha_{\rm r}^2(\mbf x^{k-1}-\mbf x_\star^0)^\top(\mbf A^\top\mbf S\mbf S^\top\mbf A)^2(\mbf x^{k-1}-\mbf x_\star^0)\\
	&\leq \|\mbf x^{k-1}-\mbf x_\star^0\|^2-2\alpha_{\rm r}(\mbf x^{k-1}-\mbf x_\star^0)^\top\mbf A^\top\mbf S\mbf S^\top\mbf A(\mbf x^{k-1}-\mbf x_\star^0)\\
	&\ \quad +\alpha_{\rm r}^2\lambda_{\max}^{\rm r}(\mbf x^{k-1}-\mbf x_\star^0)^\top\mbf A^\top\mbf S\mbf S^\top\mbf A(\mbf x^{k-1}-\mbf x_\star^0).
\end{align*} Taking conditional expectation conditioned on $\mbf z^{k-1}$ and $\mbf x^{k-1}$, we obtain
$$\mbbe_{k-1}\bem\|(\mbf I-\alpha_{\rm r}\mbf A^\top\mbf S\mbf S^\top\mbf A)(\mbf x^{k-1}-\mbf x_\star^0)\|^2\eem\leq \eta_{\rm r}\|\mbf x^{k-1}-\mbf x_\star^0\|^2.$$  Taking conditional expectation conditioned on $\mbf z^{k-1}$ and $\mbf x^{k-1}$ for
$$\|\mbf A^\top\mbf S\mbf S^\top\mbf A(\mbf z^k-\mbf x_\star^0)\|^2\leq\lambda_{\max}^{\rm r}(\mbf z^k-\mbf x_\star^0)^\top\mbf A^\top\mbf S\mbf S^\top\mbf A(\mbf z^k-\mbf x_\star^0),$$ and by $\mbf A\mbf x_\star^0=\mbf A\mbf A^\dag\mbf b$, we obtain
\begin{align*}
\mbbe_{k-1}\bem\|\mbf A^\top\mbf S\mbf S^\top\mbf A(\mbf z^k-\mbf x_\star^0)\|^2\eem\leq \lambda_{\max}^{\rm r}\mbbe_{k-1}\bem\|\mbf A(\mbf z^k-\mbf x_\star^0)\|^2\eem=	\lambda_{\max}^{\rm r}\mbbe_{k-1}\bem\|\mbf A(\mbf z^k-\mbf A^\dag\mbf b)\|^2\eem.
\end{align*} It follows that 
$$\mbbe_{k-1}\bem\|\mbf x^k-\mbf x_\star^0\|^2\eem\leq(1+\ve)\eta_{\rm r}\|\mbf x^{k-1}-\mbf x_\star^0\|^2+(1+1/\ve)\alpha_{\rm r}^2\lambda_{\max}^{\rm r}\mbbe_{k-1}\bem\|\mbf A(\mbf z^k-\mbf A^\dag\mbf b)\|^2\eem.$$ Then, by the law of total expectation and the estimate (\ref{sun2}), we have
\begin{align*}
\mbbe\bem\|\mbf x^k-\mbf x_\star^0\|^2\eem
&\leq(1+1/\ve)\alpha_{\rm r}^2\lambda_{\max}^{\rm r}\mbbe\bem\|\mbf A(\mbf z^k-\mbf A^\dag\mbf b)\|^2\eem+(1+\ve)\eta_{\rm r}\mbbe\bem\|\mbf x^{k-1}-\mbf x_\star^0\|^2\eem\\
&\leq(1+1/\ve)\alpha_{\rm r}^2\lambda_{\max}^{\rm r}\eta_{\rm c}^k\|\mbf A(\mbf z^0-\mbf A^\dag\mbf b)\|^2+(1+\ve)\eta_{\rm r}\mbbe\bem\|\mbf x^{k-1}-\mbf x_\star^0\|^2\eem\\
&\leq (1+1/\ve)\alpha_{\rm r}^2\lambda_{\max}^{\rm r}\|\mbf A(\mbf z^0-\mbf A^\dag\mbf b)\|^2(\eta_{\rm c}^k+\eta_{\rm c}^{k-1}(1+\ve)\eta_{\rm r})\\
& \ \quad +(1+\ve)^2\eta_{\rm r}^2\mbbe\bem\|\mbf x^{k-2}-\mbf x_\star^0\|^2\eem\\
&\leq \cdots\\
&\leq (1+1/\ve)\alpha_{\rm r}^2\lambda_{\max}^{\rm r}\|\mbf A(\mbf z^0-\mbf A^\dag\mbf b)\|^2\sum_{i=0}^{k-1}\eta_{\rm c}^{k-i}(1+\ve)^i\eta_{\rm r}^i\\
&\ \quad +(1+\ve)^k\eta_{\rm r}^k\|\mbf x^0-\mbf x_\star^0\|^2.
\end{align*} This completes the proof.
\end{proof}

\begin{remark} In Theorem \ref{nrgs}, no assumptions about the dimensions or rank of $\mbf A$ are assumed, and the system $\bf Ax=b$ can be consistent or inconsistent. Let $\eta=\max\{\eta_{\rm r},\eta_{\rm c}\}$. It follows from $0<\alpha_{\rm c}<2/\lambda_{\max}^{\rm c}$ and  $0<\alpha_{\rm r}<2/\lambda_{\max}^{\rm r}$ that $\eta<1$. Assume that $\ve$ satisfies $(1+\ve)\eta<1$. We have $$\mbbe\bem\|\mbf x^k-\mbf x_\star^0\|^2\eem\leq(1+\ve)^k\eta^k(\|\mbf x^0-\mbf x_\star^0\|^2+(1+\ve)\alpha_{\rm r}^2\lambda_{\max}^{\rm r}\|\mbf A(\mbf z^0-\mbf A^\dag\mbf b)\|^2/\ve^2),$$ which shows that the {\rm EBCRSI} algorithm converges linearly  in the mean square sense to $\mbf x_\star^0$ with the rate $(1+\ve)\eta$. 
\end{remark}

\subsection{The randomized extended Gauss--Seidel algorithm} The REGS algorithm \cite{ma2015conve} is one special case of the EBCRSI algorithm. Choosing $\dsp\mbf S=\frac{\|\mbf A\|_\rmf}{\|\mbf A_{i,:}\|}\mbf I_{:,i}$ with probability $\dsp\frac{\|\mbf A_{i,:}\|^2}{\|\mbf A\|_\rmf^2}$ and $\dsp\mbf T=\frac{\|\mbf A\|_\rmf}{\|\mbf A_{:,j}\|}\mbf I_{:,j}$  with probability $\dsp\frac{\|\mbf A_{:,j}\|^2}{\|\mbf A\|_\rmf^2}$, we have $$\mbbe\bem\mbf S\mbf S^\top\eem=\mbf I,\quad \mbbe\bem\mbf T\mbf T^\top\eem=\mbf I,$$ and obtain 
\begin{align*}
\mbf z^k & =\mbf z^{k-1}-\alpha_{\rm c}\frac{\|\mbf A\|_\rmf^2}{\|\mbf A_{:,j}\|^2}(\mbf A_{:,j})^\top(\mbf A\mbf z^{k-1}-\mbf b)\mbf I_{:,j},\\
\mbf x^k & =\mbf x^{k-1}-\alpha_{\rm r}\frac{\|\mbf A\|_\rmf^2}{\|\mbf A_{i,:}\|^2}\mbf A_{i,:}(\mbf x^{k-1}-\mbf z^k)(\mbf A_{i,:})^\top.
	\end{align*} We have $\lambda_{\max}^{\rm r}=\lambda_{\max}^{\rm c}=\|\mbf A\|_\rmf^2$. Setting $\alpha_{\rm r}=\alpha_{\rm c}=1/\|\mbf A\|_\rmf^2$, we recover the REGS algorithm \cite{ma2015conve,du2019tight}.

\subsection{The extended block column and row uniform sampling algorithm} We propose one new special case of the EBCRSI algorithm by using uniform sampling and refer to it as the extended block column and row uniform sampling (EBCRUS) algorithm. Assume $1\leq\ell\leq \min\{m,n\}$. Let $\mcali$ (resp. $\mcalj$) denote the set consisting of the uniform sampling of $\ell$ different numbers of $[m]$  (resp. $[n]$). Setting $\dsp\mbf S=\sqrt{{m}/{\ell}}\mbf I_{:,\mcali}$ and $\mbf T=\sqrt{{n}/{\ell}}\mbf I_{:,\mcalj}$, we have $$\mbbe\bem\mbf S\mbf S^\top\eem=\frac{\dsp\frac{m}{\ell}}{\begin{pmatrix}m\\ \ell\end{pmatrix}}\sum_{\mcali\subseteq[m],\ |\mcali|=\ell}\mbf I_{:,\mcali}\mbf I_{:,\mcali}^\top=\mbf I_m,\quad \mbbe\bem\mbf T\mbf T^\top\eem=\frac{\dsp\frac{n}{\ell}}{\begin{pmatrix}n\\ \ell\end{pmatrix}}\sum_{\mcalj\subseteq[n],\ |\mcalj|=\ell}\mbf I_{:,\mcalj}\mbf I_{:,\mcalj}^\top=\mbf I_n,$$ and obtain 
\begin{align*}
\mbf z^k &=\mbf z^{k-1}-\alpha_{\rm c}\frac{n}{\ell}\mbf I_{:,\mcalj}(\mbf A_{:,\mcalj})^\top(\mbf A\mbf z^{k-1}-\mbf b),\\
\mbf x^k & =\mbf x^{k-1}-\alpha_{\rm r}\frac{m}{\ell}(\mbf A_{\mcali,:})^\top\mbf A_{\mcali,:}(\mbf x^{k-1}-\mbf z^k).
	\end{align*}  We have $$\lambda_{\max}^{\rm r}=\frac{m}{\ell}\max_{\mcali\subseteq[m],|\mcali|=\ell}\|\mbf A_{\mcali,:}\|^2, \quad\mbox{and}\quad \lambda_{\max}^{\rm c}=\frac{n}{\ell}\max_{\mcalj\subseteq[n], |\mcalj|=\ell}\|\mbf A_{:,\mcalj}\|^2.$$ By Theorem \ref{nrgs}, the EBCRUS algorithm can have a faster convergence rate than that of the REGS algorithm if there exists $1\leq\ell\leq\min\{m,n\}$ satisfying $$\frac{m}{\ell}\max_{\mcali\subseteq[m],|\mcali|=\ell}\|\mbf A_{\mcali,:}\|^2\leq\|\mbf A\|_\rmf^2,\quad\mbox{and}\quad \frac{n}{\ell}\max_{\mcalj\subseteq[n], |\mcalj|=\ell}\|\mbf A_{:,\mcalj}\|^2\leq\|\mbf A\|_\rmf^2.$$


\end{document}